\numberwithin{equation}{section}
\newtheorem{theorem}{Theorem}[section]
\newtheorem{lemma}{Lemma}[section]
\newtheorem{corollary}{Corollary}[section]
\newtheorem{remark}{Remark}[section]
\newtheorem{definition}{Definition}[section]
\newcommand{\bfu}{u}
\newcommand{\bfh}{\mathfrak{h}}
\newcommand{\bfk}{\mathfrak{k}}
\newcommand{\bfl}{\mathfrak{l}}
\newcommand{\bfU}{U}
\newcommand{\bfv}{v}
\newcommand{\bfz}{z}
\newcommand{\bfg}{\mathfrak{g}}
\newcommand{\bfxi}{\xi}
\newcommand{\bfphi}{\phi}
\newcommand{\bfvarphi}{\varphi}
\newcommand{\bfpsi}{\psi}
\newcommand{\bfeta}{\eta}
\newcommand{\bfzeta}{\zeta}
\newcommand{\UU}{\mathscr U}
\newcommand{\R}{\mathbb R}
\newcommand{\N}{\mathbb N}
\newcommand{\E}{\mathbb E}
\newcommand{\p}{\mathbb P}
\newcommand{\F}{\mathfrak F}
\DeclareMathOperator{\tr}{tr}
\DeclareMathOperator{\id}{Id}
\newcommand{\SSS}{\mathcal S}
\newcommand{\A}{\mathcal A}
\newcommand{\dd}{\mathrm d}
\newcommand{\dx}{\, \mathrm{d}x}
\newcommand{\ds}{\, \mathrm{d}s}
\newcommand{\dt}{\, \mathrm{d}t}
\newcommand{\dxs}{\,\mathrm{d}x\, \mathrm{d}s}
\newcommand{\dif}{\mathrm{d}}
\newcommand{\mf}{\mathfrak{F}}
\newcommand{\prst}{\mathbb{P}}
\newcommand{\mt}{\mathbb{T}^d}
\newcommand{\tor}{\mathbb{T}^d}
\begin{document}

\title[Weak error stochastic Allen-Cahn]{Weak error analysis for the stochastic Allen-Cahn equation}

\author{Dominic Breit}
\address{Department of Mathematics, Heriot-Watt University, Riccarton Edinburgh EH14 4AS, UK}
\email{d.breit@hw.ac.uk}

\address{Institute of Mathematics, TU Clausthal, Erzstra\ss e 1, 38678 Clausthal-Zellerfeld, Germany}
\email{dominic.breit@tu-clausthal.de}

\author{Andreas Prohl}
\address{}
\email{prohl@na.uni-tuebingen.de}

%
%

\begin{abstract}
We prove strong rate {\em resp.}~weak rate ${\mathcal O}(\tau)$ for a structure preserving temporal discretization (with $\tau$ the step size) of the stochastic Allen-Cahn equation with additive {\em resp.}~multiplicative colored noise in $d=1,2,3$ dimensions. 
Direct variational arguments exploit the one-sided Lipschitz property of the cubic nonlinearity in the first setting to settle first order strong rate. It is the same property which allows for uniform bounds for the derivatives of the solution of the related Kolmogorov equation, and then leads to weak rate ${\mathcal O}(\tau)$ in the presence of multiplicative noise.
Hence, we obtain twice the rate of convergence known for the strong error in the presence of multiplicative noise.

\end{abstract}

\keywords{Stochastic Allen-Cahn equation \and weak error analysis \and time discretisation  \and convergence rates}
\subjclass[2010]{65M15, 65C30, 60H15, 60H35}

\date{\today}

\maketitle

%
%
%
%
%
%
%
%
%
%

\section{Introduction}

We use time discretisation to approximate the stochastic Allen--Cahn equation
\begin{align}\label{eq:SNS}
\left\{\begin{array}{rc}
\dd\bfu= \bigl(\Delta\bfu - (\bfu^3-\bfu)\bigr)\dt+\Phi(\bfu)\dd W
& \mbox{in $\mathcal Q_T$,}\\
\bfu(0)=\bfu^0\,\qquad\qquad\qquad\qquad\qquad\qquad&\mbox{ \,in $\mt$,}\end{array}\right.
\end{align}
in $\mathcal Q_T:=(0,T)\times\mt$, where $\mathbb T^d = \left( (-\pi, \pi)|_{\{ - \pi, \pi \}} \right)^d$ (supplemented with periodic boundary conditions) with $T>0$ and $d=1,2,3$. The use of periodic boundary conditions is for an easy of presentation only, see Remark \ref{remark:noise'}, {\bf 4}.
The unknown  $\bfu$ in \eqref{eq:SNS} is
defined on a given filtered probability space $(\Omega,\F,(\F_t)_{t\geq0},\prst)$, and $\bfu_0$ is a given initial datum.
Here $W$ denotes a cylindrical Wiener process and $\Phi$ takes values in the space of Hilbert-Schmidt operators; see Section \ref{sec:prob} for details.

The deterministic version of (\ref{eq:SNS}) is the well-known  {\em Allen-Cahn equation} --- a phase-field model to approximate the dynamics of an (material) interface by a {\em diffuse interface}; see {\em e.g.} \cite{DuFe} for a recent review of (deterministic) phase-field models. 
The related mathematical, underlying modeling, and physical conclusions usually base on the underlying Helmholtz free energy functional ${\mathcal E} : W^{1,2}(\mt)  \rightarrow {\mathbb R}$, where
\begin{equation}\label{ac-1}
{\mathcal E}(\phi) = \frac{1}{2}\int_{{\mathbb T}^d} \vert \nabla \phi\vert^2\, {\rm d}x +  \int_{{\mathbb T}^d} F(\phi)\, {\rm d}x\,, \qquad \mbox{for} \qquad F(x) = \frac{1}{4} \bigl( \vert x\vert^2-1\bigr)^2\,.
\end{equation}
Here, in particular, the latter energy part accounts for the interfacial/mixing energy, and is related to $f(x) = x^3-x$ in (\ref{eq:SNS}) by $F' = f$. As a consequence, the Allen-Cahn equation is the {\em gradient flow} of (\ref{ac-1}), {\em i.e.},
\begin{equation}\label{ac-2}
\partial_t u = -D {\mathcal E}(u) =  \Delta u -  f(u) \quad \mbox{in}\quad L^2(\mt)\,,
\end{equation}
 where $D {\mathcal E}(u)$ denotes the Fr\'echet derivative of ${\mathcal E}$ at $u$: multiplication of (\ref{ac-2}) with $D {\mathcal E}(u)$, integration in space, and the chain rule then lead to the energy {\em identity}
 \begin{equation}\label{ac-3}
 {\mathcal E}\bigl( u(t, \cdot)\bigr) + 
  \int_0^t \int_{{\mathbb T}^d} \bigl\vert D{\mathcal E}\bigl( u(s, \cdot)\bigr)\bigr\vert^2\, {\rm d}x{\rm d}s = {\mathcal E}\bigl( u^0\bigr), \qquad 0 \leq t \leq T\, .
 \end{equation}
 For ${\mathcal E}(u_0) < \infty$, this identity may serve 
in mathematical analysis to deduce (a priori) bounds for 
solutions in physically relevant norms ---
reflecting the fact that the Helmholtz energy ${\mathcal E}$ is the proper functional to explain the dynamics of (\ref{ac-2}).

This {\em energy-driven} approach also serves as guidance to construct 
the numerical scheme (\ref{ac-2d}) below to properly address the specific nature of (\ref{ac-2}). But before we also mention here `general-purpose' operator-splitting methods, where the nonlinearity in $D{\mathcal E}(u)$ is treated explicitly in a time marching context to avoid the use of nonlinear numerical solvers: it is, however, the concomitant violation of the dissipative energy law (\ref{ac-3}) on a discrete level, see also (\ref{ac-3d}) below, that a related
stability/convergence analysis for splitting schemes usually 
only gives a priori bounds {\em in non-physical norms}, and its derivation is based on a discrete {\em Gronwall type estimate} --- rather than {\em identity} (\ref{ac-3}) ---, which heavily affects discrete long time stability. This drawback, in particular, may usually only be compensated in simulations by using
(comparatively) much smaller step sizes than in the context of the structure-preserving discretization below; we also mention here that the usual application of model (\ref{ac-2}) in sciences ({\em e.g.}, multiphase flow in complicated, or even moving domains, \cite[Ch.~2]{DuFe}) or geometric PDEs
({\em e.g.}, approximation of mean curvature flows, \cite{DeDzEl}) involves a small scaling factor $\varepsilon >0$ under which realistic (diffuse) interface motion holds --- which even further worsens inherent discrete instabilities of structure notably; see also \cite[Ch.'s~4 \& 5]{DuFe} for a related further discussion.

For these reasons we favor as starting point for a temporal discretisation for (\ref{eq:SNS}) one which inherits  the (gradient flow) structure of the original problem --- as is the following implicit Euler discretisation for (\ref{ac-2}) governing
 iterates $( u_m)_{m=0}^M$,
 \begin{equation}\label{ac-2d}
\frac{1}{\tau}\bigl( u_{m} - u_{m-1}\bigr) -\Delta u_m +  f \bigl( u_m\bigr) = 0 \quad \text{for}\quad m \geq 1\,, \quad u_0 = u^0\,,
\end{equation}
on an equi-distant mesh $(t_m)_{m=0}^M \subset [0,T]$ of size $0 < \tau < 1$, and satisfying the  {\em discrete energy inequality} (see \cite{FePr})
\begin{equation}\label{ac-3d}
 {\mathcal E}\bigl( u_m\bigr) +
  \tau\sum_{\ell=1}^m\int_{{\mathbb T}^d} \bigl\vert D{\mathcal E}\bigl( u_\ell\bigr)\bigr\vert^2\, {\rm d}x  \leq  {\mathcal E}\bigl( u_0\bigr), \quad 1 \leq m \leq \bigl[\tfrac{T}{\tau}\bigr]\, .
 \end{equation}

In this work, we derive weak rates of convergence for the time iteration scheme
 \begin{equation}\label{tdiscrAintro}
 u^{m} - u^{m-1} -\tau\Delta u^m +  \tau f \bigl( u^m\bigr) = \Phi(u^{m-1})\Delta_m W \quad \text{for}\quad m \geq 1\,, \quad u^0 = u_0\,,
\end{equation}
with $\Delta_mW=W(t_{m})-W(t_{m-1})$,
to address the SPDE (\ref{eq:SNS}) of gradient type, generalising (\ref{ac-2d}). Again, its construction is motivated by the demand to inherit properties in terms of the underlying energy ${\mathcal E}$, for which the concept of a {\em variational strong solution} for (\ref{eq:SNS}) is natural --- see Definition \ref{def:inc2d} below --- which satisfies the following energy identity
\begin{eqnarray}\label{ac-3s}
&& {\mathbb E}\bigl[{\mathcal E}\bigl( u(t, \cdot)\bigr)\bigr] + 
  {\mathbb E}\Bigl[\int_0^t\int_{{\mathbb T}^d} \bigl\vert D{\mathcal E}\bigl( u(s, \cdot)\bigr)\bigr\vert^2\, {\rm d}x{\rm d}s\Bigr] \\ \nonumber
  &&\qquad = {\mathbb E}\bigl[{\mathcal E}\bigl( u_0\bigr)\bigr] + \frac{1}{2} {\mathbb E}\Bigl[\int_0^t \int_{{\mathbb T}^d}\bigl( D^2 {\mathcal E}(u) \Phi(u),\Phi(u)\bigr) \, {\rm d}x{\rm d}s \Bigr],\quad 0 \leq t \leq T\, ,
 \end{eqnarray}
and therefore generalises property (\ref{ac-3}) for (\ref{ac-2});
 see {\em e.g.} \cite{Ge}, where so-called strong variational solutions to stochastic equations of gradient-type are considered.

A discretisation close to (\ref{tdiscrAintro}) for (\ref{eq:SNS}) has been studied in \cite{MP1} in this spirit, and it was shown that the {\em strong} error --- {\em i.e.}, the expectation of the discrete $L^\infty(0,T;L^2_x)\cap L^2(0,T;W^{1,2}_x)$ distance of discrete and continuous solution --- is of order $O(\sqrt{\tau})$.
 This is certainly optimal in general due to the low temporal regularity of the driving Wiener process in \eqref{eq:SNS}. 
%
Our goal in this work is to verify first order weak error estimates
for (\ref{tdiscrAintro}); it turns out, that its derivation crucially depends on the kind of
noise, which we assume to be spatially smooth throughout:
\begin{enumerate}
\item[{\bf a)}] If $\Phi(u) \equiv \Phi$ in (\ref{eq:SNS}) generates {\em additive} noise,
we even verify {\em strong} convergence rate ${\mathcal O}(\tau)$ for scheme
(\ref{tdiscrAintro}) in Theorem \ref{thm:3.1tilde} for $d=1,2$ and 3. Its derivation is based on 
focusing on the {\em random} PDE (\ref{eq:SNS1}) for the transformation $y(t) = u(t)- \Phi W(t)$, which is now
differentiable in time (see Corollary \ref{cor:add}), and a simple use of the
binomial formula to express $f\bigl( u(t)\bigr) = f\bigl( y(t) + \Phi W(t)\bigr)$ right below (\ref{eq:SNS1}). {\em No} discrete stability for iterates of (\ref{tdiscrAintro}) is needed here, but the implicit use of the one-sided Lipschitz nonlinearity $f$, as well as its
explicit form are exploited to verify the strong error bounds in Section \ref{sec:add}.
This approach is motivated from \cite{BP1}.
\item[{\bf b)}] If $\Phi(u)$ in (\ref{eq:SNS}) exerts {\em multiplicative} noise, we use the Kolmogorov equation (\ref{eq:Kolm'}) associated with \eqref{eq:SNS}, in combination with higher order discrete stability for
iterates $(u_m)_{m=0}^M$
from scheme
(\ref{tdiscrAintro}); cf. Lemma \ref{lemma:3.1}. For estimate \eqref{lem:3.1c} and $d=3$ concerning the second derivative of the time-discrete solution we are forced to work under the assumption of an affine linear noise, see \ref{N2} below, whereas for $d=2$ the same argument even works for a general nonlinear noise, see \ref{N1} below. 
For the weak error analysis, we conceptually borrow tools from \cite{BrGo,De,BrDe}; see Remark \ref{remark:noise'}, {\bf 3.} in particular, the structural restrictions are detailed. As in \cite{DePr,De,BrDe}, the error $\E[\varphi(u_m)-\varphi(u(t_m))]$ for a smooth function $\varphi$ can be linked  to the solution of the Kolmogorov equation by means of an application of It\^{o}'s formula. Therefore we need time-continuous interpolation of the discrete iterates $(u_m)_{m=0}^M$, which yields an $(\mathfrak F_t)$-adapted process.
Since we work with the fully implicit scheme (\ref{tdiscrAintro}) this is more complicated than in previous works \cite{DePr,De,BrDe}.
A natural candidate for the interpolation is 
\begin{align*}\bfu_\tau(t)=\frac{t_{m}-t}{\tau}\bfu_{m-1}+\mathcal T_\tau\bigg(\frac{1}{\tau}\int_{t_{m-1}}^t\bfu_{m-1}\ds+\int_{t_{m-1}}^t\Phi(\bfu_{m-1})\,\dd W\bigg),\qquad t_{m-1}\leq t\leq t_m,\end{align*}
where the solution map $\mathcal T_\tau$ for (\ref{eq:T}) is the discrete nonlinear semi-group corresponding to $D\mathcal E$, which we analyse in Section \ref{sec:Ttau}. We observe that the nonlinearity does not appear explicitely in the formula for $\bfu_\tau$. In previous
works the linear discrete semigroup $\mathcal S_\tau=(\mathrm{Id}-\tau\Delta)^{-1}$ is used instead and $f$ is explicitly evaluated.
It turns out that the nonlinear map $\mathcal T_\tau$ has nice properties similar to the linear case as a consequence of the one-sided Lipschitz property of $f$; see Lemma \ref{lem:T}. In particular, $\mathcal T_\tau$ can be linearised around the identity with an error of order $O(\tau)$ in various norms; see Corollary \ref{coro-1}.
\end{enumerate}
The goal in this paper is a weak error analysis of 
(\ref{tdiscrAintro}) as an example for an implementable scheme in a {\em broad setting of applications}, including multiphase flow dynamics in complicated domains, 
which inherits (long time) discrete stability even for scalings $\varepsilon \ll 1$ to
properly simulate diffuse phase field dynamics; see \cite{DuFe,DeDzEl}, and also
item {\bf 5.} in Remark \ref{others1}. In fact, there exist several other works on weak error analysis for different schemes to solve (\ref{eq:SNS}) in the literature, most of which apply to restricted data settings, such as domains for $d=1$ being intervals $(a,b) \subset {\mathbb R}$, or related drift operators being generators of linear semigroups, for which spectral properties need be available for actual computations; see items {\bf 3}.--{\bf 4.} of Remark \ref{others1}. 
In this context, admissible spatial meshes are often needed to be equi-distant, which clearly affects the capacity of related schemes to simulate multiscale phase evolution via \eqref{eq:SNS} for general data settings.

\section{Mathematical framework}
\label{sec:framework}

\subsection{Probability setup}\label{sec:prob}

Let $(\Omega,\F,(\F_t)_{t\geq0},\prst)$ be a stochastic basis with a complete, right-continuous filtration. The process $W$ is a cylindrical Wiener process, that is, $W(t)=\sum_{k\geq1}\beta_j(t) e_j$ with $(\beta_i)_{i\geq1}$ being mutually independent real-valued standard Wiener processes relative to $(\F_t)_{t\geq0}$, and $(e_i)_{i\geq1}$ a complete orthonormal system in a separable Hilbert space $\mathfrak{U}$.
Let us now give the precise definition of the diffusion coefficient $\varPhi$ taking values in the set of Hilbert-Schmidt operators $L_2(\mathfrak U;\mathbb H)$, where
$\mathbb H$ can take the role of various Hilbert spaces such as $L^2(\mt)$, $W^{1,2}(\mt)$ and $W^{2,2}(\mt)$ for which we use the shorthand notations $L^2_x, W^{1,2}_x$ and $W^{2,2}_x$.

In the following we formulate two sets of assumptions regarding the (regularity of the) diffusion coefficient $\Phi$, which will allow us to derive high moment bounds for higher derivatives of the solution of (\ref{eq:SNS}) in Section \ref{subsec:solution}. In the first case {\bf (a)} below we consider a general
nonlinear multiplicative noise.
In the second case \ref{N2} we assume that $\Phi$ is an affine linear function of $u$. As we shall see below, assumption \ref{N1} is always sufficient for our analysis in the case $d=1,2$; see Remark \ref{remark:noise'}, {\bf 3.}.
\begin{enumerate}[label={({\bf N}\arabic{*})}]
\item\label{N1} {\bf (a)}
 For $\bfz\in L^2(\mt)$ let $\,\varPhi(\bfz):\mathfrak{U}\rightarrow L^2(\mt)$ be defined by $\Phi(\bfz)e_k=\bfg_k(\cdot,\bfz(\cdot))$, where
$\bfg_k\in C^1(\mt\times\R)$ and\footnote{We denote by $\nabla_x$ the derivative with respect to first variable, \emph{i.e.}, the $d$-valued spatial variable and by $\nabla_\xi$ the derivative with respect to the second variable.}
\begin{align}\label{eq:phi}
\begin{aligned}
\sum_{k\geq1}|\bfg_k(x,\bfxi)|^2 \leq c(1+|\bfxi|^2)&,\qquad
\sum_{k\geq1}|\nabla_{\bfxi} \bfg_k(x,\bfxi)|^2\leq c,\\
\sum_{k\geq1}|\nabla_x \bfg_k(x,\bfxi)|^2 &\leq c(1+|\bfxi|^2),\quad x\in \mt,\,\bfxi\in\R.
\end{aligned}
\end{align}
Note that this implies
\begin{align}\label{eq:phi1a}
\|\Phi(\bfu)-\Phi(\bfv)\|_{L_2(\mathfrak U;L^2_x)}&\leq\,c\|\bfu-\bfv\|_{L^2_x}\qquad\forall \bfu,\bfv\in L^2_x,\\
\label{eq:phi1b}
\|\Phi(\bfu)\|_{L_2(\mathfrak U;W^{1,2}_x)}&\leq\,c\big(1+\|\bfu\|_{W^{1,2}_x}\big)\qquad\forall \bfu\in W^{1,2}_x,\\
\label{eq:phi1c}
\|D\Phi(\bfu)\|_{L_2(\mathfrak U;\mathcal L( L^{2}_x))}&\leq\,c\qquad\forall \bfu\in L^{2}_x.
\end{align}
 {\bf (b)} We require
 $\bfg_k\in C^2(\mt\times\R)$, together with
\begin{align}\label{eq:phi2}
\begin{aligned}
\sum_{k\geq1}|\nabla_x^2\bfg_k(x,\bfxi)|^2 \leq c(1+|\bfxi|^2)&,\qquad
\sum_{k\geq1}|\nabla_{x,\xi}\nabla_{\bfxi} \bfg_k(x,\bfxi)|^2&\leq c,\quad x\in \mt,\,\bfxi\in\R,
\end{aligned}
\end{align}
and sometimes also
\begin{align}\label{eq:phi3}
\begin{aligned}
\sum_{k\geq1}|\nabla_{x}^3\bfg_k(x,\bfxi)|^2 \leq c(1+|\bfxi|^2)&,
\qquad \sum_{k\geq1}|\nabla_\xi\nabla_{x,\bfxi}^2 \bfg_k(x,\bfxi)|^2 \leq c.
\end{aligned}
\end{align}
\item\label{N2}
We assume that
  $\Phi(\bfz)e_k=\alpha_k(\cdot)\bfz(\cdot)+\beta_k(\cdot)$, where
$\alpha_k,\beta_k\in C^3(\mt)$ and 
\begin{align}\label{eq:phismooth}
\sum_{k\geq1}\|\alpha_k\|_{C^3(\mt)}+ \sum_{k\geq1}\|\beta_k\|_{C^3(\mt)}\leq c.
\end{align}
\end{enumerate}
Note that \eqref{eq:phi} and \eqref{eq:phi2} imply
\begin{align}\label{eq:phi2b}
\|\Phi(\bfu)\|_{L_2(\mathfrak U;W^{2,2}_x)}&\leq\,c\big(1+\|\bfu\|_{W^{1,4}_x}^2+\|\bfu\|_{W^{2,2}_x}\big)\qquad\forall \bfu\in W^{2,2}_x,\\
\label{eq:phi2a}
\|D\Phi(\bfu)\|_{L_2(\mathfrak U;\mathcal L( W^{1,2}_x))}&\leq\,c\qquad\forall \bfu\in W^{1,2}_x,\\
\label{eq:phi2c}
\|D^2\Phi(\bfu)\|_{L_2(\mathfrak U;\mathcal L( L^{4}_x\times L^4_x;L^2_x))}&\leq\,c\qquad\forall \bfu\in L^{2}_x,
\end{align}
whereas \eqref{eq:phi}, \eqref{eq:phi2} and  \eqref{eq:phi3} together yield 
\begin{align}\label{eq:phi3b}
&\|\Phi(\bfu)\|_{L_2(\mathfrak U;W^{3,2}_x)}\leq\,c\big(1+\|\bfu\|_{W^{1,6}_x}^3+\|\bfu\|_{W^{2,2}_x}^2+\|\bfu\|_{W^{3,2}_x}\big)\qquad\forall \bfu\in W^{3,2}_x,\\\label{eq:phi3c}
&\|D^3\Phi(\bfu)\|_{L_2(\mathfrak U;\mathcal L( L^{6}_x\times L^6_x\times L^6_x;L^2_x))}\leq\,c\qquad\forall \bfu\in L^{2}_x.
\end{align}
Assumption \eqref{eq:phi} allows us to define stochastic integrals.
Given an $(\mathfrak F_t)-$adapted process $\bfu\in L^2(\Omega;C([0,T];L^2(\mt)))$, the stochastic integral $$t\mapsto\int_0^t\varPhi(\bfu)\,\dif W$$
is a well-defined process taking values in $L^2(\mt)$ (see \cite{PrZa} for a detailed construction). Moreover, we can multiply by test functions to obtain
 \begin{align*}
\bigg\langle\int_0^t \varPhi(\bfu)\,\dd W,\bfphi\bigg\rangle_{L^2_x}=\sum_{i\geq 1} \int_0^t\langle \varPhi(\bfu) e_i,\bfphi\rangle_{L^2_x}\,\dd\beta_i \qquad \forall\, \bfphi\in L^2(\mt).
\end{align*}
Similarly, we can define stochastic integrals with values in $W^{1,2}(\mt)$ and $W^{2,2}(\mt)$ respectively if $\bfu$ belongs to the corresponding class.


\subsection{The concept of solutions}
\label{subsec:solution}

In this section we give a precise definition of a solution to \eqref{eq:SNS} and derive some of its basic properties.
\begin{definition}\label{def:inc2d}
 Let $(\Omega,\mf,(\mf_t)_{t\geq0},\prst)$ be a given stochastic basis with a complete right-con\-ti\-nuous filtration and an $(\mf_t)$-cylindrical Wiener process $W$. Suppose that $\Phi$ satisfies \ref{N1} {\bf (a)} and that $d=1,2,3$. Let $\bfu_0$ be an $\mf_0$-measurable random variable with values in $L^2(\mt)$. Then $\bfu$ is called a \emph{weak pathwise solution} \index{incompressible Navier--Stokes system!weak pathwise solution} to \eqref{eq:SNS} with the initial condition $\bfu_0$ provided
\begin{enumerate}
\item the function $\bfu$ is $(\mf_t)$-adapted and
$$\bfu \in C([0,T];L^2(\tor))\cap L^2(0,T; W^{1,2}(\tor))\quad\text{$\p$-a.s.},$$
\item the equation
\begin{align*}
\int_{\mt}\bfu(t)&\,\bfvarphi\dx-\int_{\mt}\bfu_0\,\bfvarphi\dx
\\&=-\int_0^t\int_{\mt}f(\bfu)\,\bfvarphi\dx\,\dif t-\int_0^t\int_{\mt}\nabla\bfu\cdot\nabla\bfvarphi\dx\,\dif s+\int_0^t\int_{\mt}\Phi(\bfu)\,\bfvarphi\dx\,\dif W
\end{align*}
holds $\p$-a.s. for all $\bfvarphi\in W^{1,2}(\mt)$ and all $t\in[0,T]$.
\end{enumerate}
\end{definition}
The existence of a solution can be shown by the popular variational approach; see \emph{e.g.} \cite{PrRo}.
\begin{theorem}\label{thm:inc2d}
Let $(\Omega,\mf,(\mf_t)_{t\geq0},\prst)$ be a given stochastic basis with a complete right-continuous filtration and an $(\mf_t)$-cylindrical Wiener process $W$. Suppose that $\Phi$ satisfies \ref{N1} {\bf (a)} and that $d=1,2,3$. Let $\bfu_0$ be an $\mf_0$-measurable random variable such that $\bfu_0\in L^q(\Omega;L^2(\mathbb T^d))$ for some $q>2$. Then there exists a unique weak pathwise solution to \eqref{eq:SNS} in the sense of Definition \ref{def:inc2d} with the initial condition $\bfu_0$.
\end{theorem}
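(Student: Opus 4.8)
The plan is to recast \eqref{eq:SNS} as an abstract stochastic evolution equation of monotone type and to apply the variational well-posedness theory, as in \cite{PrRo}. First I would fix the Gelfand triple
\[
V:=W^{1,2}(\mt)\hookrightarrow H:=L^2(\mt)\hookrightarrow V^\ast=W^{-1,2}(\mt),
\]
and note that for $d=1,2,3$ the Sobolev embedding $W^{1,2}(\mt)\hookrightarrow L^6(\mt)$ guarantees $f(\bfu):=\bfu^3-\bfu\in L^2(\mt)\hookrightarrow V^\ast$ whenever $\bfu\in V$. Setting $A(\bfu):=\Delta\bfu-f(\bfu)$, equation \eqref{eq:SNS} reads $\dd\bfu=A(\bfu)\dt+\Phi(\bfu)\dd W$, $\bfu(0)=\bfu_0$, and since $\bfu_0\in L^q(\Omega;L^2(\mt))$ with $q>2$ it is in particular an admissible $\mf_0$-measurable $L^2(\mt)$-valued datum.

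Next I would verify the structural conditions on $(A,\Phi)$. Hemicontinuity of $A$ holds because $\lambda\mapsto\langle A(\bfu+\lambda\bfv),\bfw\rangle_{L^2_x}$ is a polynomial in $\lambda$. The key point is (local) monotonicity, which follows from the one-sided Lipschitz property of $f$: since $(a^3-b^3)(a-b)=(a-b)^2(a^2+ab+b^2)\geq0$, one has $\langle f(\bfu)-f(\bfv),\bfu-\bfv\rangle_{L^2_x}\geq-\|\bfu-\bfv\|_{L^2_x}^2$, so that $\langle A(\bfu)-A(\bfv),\bfu-\bfv\rangle_{L^2_x}\leq\|\bfu-\bfv\|_{L^2_x}^2$, and with the Lipschitz bound \eqref{eq:phi1a} on $\Phi$ we get $2\langle A(\bfu)-A(\bfv),\bfu-\bfv\rangle_{L^2_x}+\|\Phi(\bfu)-\Phi(\bfv)\|_{L_2(\mathfrak U;L^2_x)}^2\leq c\,\|\bfu-\bfv\|_{L^2_x}^2$. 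Coercivity follows from the sign of the cubic together with \eqref{eq:phi}:
\begin{align*}
2\langle A(\bfu),\bfu\rangle_{L^2_x}+\|\Phi(\bfu)\|_{L_2(\mathfrak U;L^2_x)}^2
&=-2\|\nabla\bfu\|_{L^2_x}^2-2\|\bfu\|_{L^4(\mt)}^4+2\|\bfu\|_{L^2_x}^2+\|\Phi(\bfu)\|_{L_2(\mathfrak U;L^2_x)}^2\\
&\leq-\|\bfu\|_{W^{1,2}_x}^2-2\|\bfu\|_{L^4(\mt)}^4+c\big(1+\|\bfu\|_{L^2_x}^2\big),
\end{align*}
which, besides the usual a priori bounds in $L^2_x$ and $W^{1,2}_x$, also delivers $\E\int_0^T\|\bfu\|_{L^4(\mt)}^4\dt<\infty$. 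Finally $\|A(\bfu)\|_{V^\ast}\leq\|\bfu\|_{W^{1,2}_x}+\|f(\bfu)\|_{L^2_x}\leq c(1+\|\bfu\|_{W^{1,2}_x}^3)$, while $\|\Phi(\bfu)\|_{L_2(\mathfrak U;L^2_x)}\leq c(1+\|\bfu\|_{L^2_x})$ by \eqref{eq:phi}. The abstract variational theory then yields a (unique) solution $\bfu\in L^2\big(\Omega;C([0,T];L^2(\mt))\big)\cap L^2\big(\Omega\times(0,T);W^{1,2}(\mt)\big)$, whose $q$-th moments are controlled by those of $\bfu_0$ via It\^{o}'s formula applied to $\|\bfu\|_{L^2_x}^q$.

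It remains to identify this variational solution with a weak pathwise solution in the sense of Definition \ref{def:inc2d}. Part (1) is immediate from the regularity and $(\mf_t)$-adaptedness above. For part (2), the variational solution satisfies, $\p$-a.s. and for all $t\in[0,T]$,
\[
\langle\bfu(t),\bfvarphi\rangle_{L^2_x}=\langle\bfu_0,\bfvarphi\rangle_{L^2_x}+\int_0^t\langle A(\bfu),\bfvarphi\rangle\ds+\Big\langle\int_0^t\Phi(\bfu)\dd W,\bfvarphi\Big\rangle_{L^2_x}
\]
for all $\bfvarphi\in V$; since $\langle A(\bfu),\bfvarphi\rangle=-\int_{\mt}\nabla\bfu\cdot\nabla\bfvarphi\dx-\int_{\mt}f(\bfu)\,\bfvarphi\dx$ is well defined for every $\bfvarphi\in W^{1,2}(\mt)$ (using $f(\bfu)\in L^2(\mt)$), this is exactly the identity in Definition \ref{def:inc2d}\,(2). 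Uniqueness follows from the monotonicity estimate above: applying It\^{o}'s formula to $\|\bfu-\widetilde{\bfu}\|_{L^2_x}^2$ for two solutions with the same initial datum, the drift is $\leq c\,\|\bfu-\widetilde{\bfu}\|_{L^2_x}^2$; localising to remove the local martingale and invoking Gronwall's lemma gives $\bfu=\widetilde{\bfu}$ $\p$-a.s.

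The main obstacle is the super-linear cubic growth of $f$, which is not dominated by the quadratic coercivity exponent when $d=3$ (and is borderline for $d=2$), so the monotone scheme with the single triple $V\subset H\subset V^\ast$ must be applied with care. I would handle this either through the locally monotone refinement of the variational theory, or, equivalently, by running an explicit spectral Galerkin approximation and passing to the limit via a monotonicity (Minty--Browder) argument rather than by compactness of the nonlinear term; the decisive extra ingredient is the bound $\E\int_0^T\|\bfu\|_{L^4(\mt)}^4\dt<\infty$ above, which keeps $f(\bfu)$ bounded in $L^{4/3}(0,T;V^\ast)$ uniformly along the approximation.
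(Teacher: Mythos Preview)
The paper does not give a proof of this theorem at all; it simply states that ``The existence of a solution can be shown by the popular variational approach; see \emph{e.g.}~\cite{PrRo}.'' Your proposal therefore spells out exactly what the paper leaves implicit: you set up the Gelfand triple $W^{1,2}\hookrightarrow L^2\hookrightarrow W^{-1,2}$, verify hemicontinuity, weak monotonicity via the one-sided Lipschitz property of $f$, coercivity, and the diffusion bounds from \ref{N1}~(a), and then invoke the abstract well-posedness result. This is precisely the route the paper points to.

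Your final paragraph is the important one, and it is good that you flagged it. The standard monotone framework in \cite{PrRo} requires a growth bound of the form $\|A(\bfu)\|_{V^\ast}^{2}\leq c(1+\|\bfu\|_V^{2})(1+\|\bfu\|_H^\beta)$, which fails here since $\|f(\bfu)\|_{L^2_x}\leq c\|\bfu\|_{W^{1,2}_x}^3$ in $d=3$. Your two proposed fixes are both legitimate: either apply the locally monotone extension (Liu--R\"ockner), where the additional $L^4$-coercivity $\E\int_0^T\|\bfu\|_{L^4_x}^4\,\dd t<\infty$ you derived is precisely what is needed to close the argument, or run a spectral Galerkin scheme and identify the limit of the nonlinearity via a Minty--Browder trick using weak monotonicity. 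Either way the proof is correct; the paper's bare citation of \cite{PrRo} is arguably a slight over-simplification, and your discussion is more honest about what is actually required.
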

%

\begin{lemma}\label{lemma:3.1A} 
 Suppose that $\Phi$ satisfies \ref{N1} {\bf (a)} and let $\bfu$ be the weak pathwise solution to \eqref{eq:SNS}.
 \begin{enumerate}
 \item[(a)] 
Assume that $\mathcal E(\bfu_0)\in L^{q}(\Omega)$ for some $q\geq 1$. Then we have
\begin{align}
\label{lem:}\E\bigg[\bigg(\sup_{0\leq s\leq T}\mathcal E(\bfu(s))+\int_0^T\|D\mathcal E(u(s))\|_{L^2_x}^2\,\mathrm{ds}\bigg)^q\bigg]&\leq\,c\,\E\big[\big(\mathcal E(\bfu_0)+1\big)^q\big],
\end{align}
where $c=c(q,T,\bfu_0)>0$.
 \item[(b)] Assume that $\mathcal E(\bfu_0)\in L^{1}(\Omega)$. Then we have for any $\tau\in(0,T)$
 \begin{align}
 \label{eq:inerror}\E\bigg[\sup_{0\leq s\leq \tau}\|\bfu(s)-\bfu_0\|^{2}_{L^{2}_x}+\int_0^\tau\|\nabla\bfu(s)\|^2_{L^2_x}\,\mathrm{ds}\bigg]&\leq\,c\tau,
\end{align}
where $c=c(T,\bfu_0)>0$ is independent of $\tau>0$.
\end{enumerate}
\end{lemma}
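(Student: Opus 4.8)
The plan is to derive both statements from It\^o's formula applied to suitable functionals of the solution $\bfu$, combined with the energy identity \eqref{ac-3s} and standard stochastic estimates (Burkholder--Davis--Gundy, Gronwall). For part (a), I would apply It\^o's formula to $t\mapsto \mathcal E(\bfu(t))$ (this is rigorous for the variational solution since $\bfu\in L^2(0,T;W^{1,2}_x)$ with $D\mathcal E(\bfu)=-\Delta\bfu+f(\bfu)\in L^2(0,T;L^2_x)$ pathwise), obtaining
\begin{align*}
\mathcal E(\bfu(t)) + \int_0^t \|D\mathcal E(\bfu(s))\|_{L^2_x}^2\,\ds = \mathcal E(\bfu_0) + \tfrac12\int_0^t \bigl(D^2\mathcal E(\bfu)\Phi(\bfu),\Phi(\bfu)\bigr)_{L^2_x}\,\ds + \int_0^t \bigl(D\mathcal E(\bfu),\Phi(\bfu)\,\dd W\bigr)_{L^2_x}.
\end{align*}
Since $D^2\mathcal E(\bfu)=-\Delta + f'(\bfu)$ with $f'(\bfu)=3\bfu^2-1\geq -1$, the correction term is bounded by $c\,\|\Phi(\bfu)\|_{L_2(\mathfrak U;W^{1,2}_x)}^2$, which by \eqref{eq:phi1b} is $\leq c(1+\|\bfu\|_{W^{1,2}_x}^2)\leq c(1+\mathcal E(\bfu))$ (using $\|\nabla\bfu\|_{L^2_x}^2\leq 2\mathcal E(\bfu)$ and that the $L^2_x$-norm is itself controlled by $\mathcal E(\bfu)$ plus a constant via $F(x)\ge \tfrac14 x^2 - c$). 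Raising to the $q$-th power, taking the supremum over $t$, applying BDG to the martingale term (its quadratic variation is $\int_0^t \|\Phi(\bfu)^*D\mathcal E(\bfu)\|_{\mathfrak U}^2\,\ds \leq \int_0^t \|D\mathcal E(\bfu)\|_{L^2_x}^2\|\Phi(\bfu)\|_{L_2(\mathfrak U;L^2_x)}^2\,\ds$ and the first factor is absorbed into the left-hand side after a Young-type splitting, leaving $c(1+\sup_s\mathcal E(\bfu(s)))^{q/2}\cdot(\cdots)$), and then using Gronwall yields \eqref{lem:}. The time-integral term on the left is carried along and retained at the end.

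For part (b), the idea is to test the equation with $\bfu-\bfu_0$, or equivalently apply It\^o's formula to $t\mapsto \tfrac12\|\bfu(t)-\bfu_0\|_{L^2_x}^2$:
\begin{align*}
\tfrac12\|\bfu(t)-\bfu_0\|_{L^2_x}^2 + \int_0^t \|\nabla\bfu\|_{L^2_x}^2\,\ds = -\int_0^t (\nabla\bfu,\nabla\bfu_0)_{L^2_x}\,\ds - \int_0^t (f(\bfu),\bfu-\bfu_0)_{L^2_x}\,\ds + \tfrac12\int_0^t \|\Phi(\bfu)\|_{L_2(\mathfrak U;L^2_x)}^2\,\ds + \int_0^t (\bfu-\bfu_0,\Phi(\bfu)\,\dd W)_{L^2_x}.
\end{align*}
Here I use the one-sided sign structure: $(f(\bfu),\bfu)_{L^2_x}=\int (\bfu^4-\bfu^2)\dx\geq -c$, so $-(f(\bfu),\bfu)\leq c$, while $(f(\bfu),\bfu_0)$ is controlled using the bound on $\|\bfu\|_{L^4_x}$ coming from $\mathcal E(\bfu)\in L^\infty_t L^1_\Omega$ (part (a) with $q=1$) together with $\mathcal E(\bfu_0)\in L^1(\Omega)$. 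The first term is estimated by Cauchy--Schwarz and Young, putting a fraction of $\int_0^t\|\nabla\bfu\|_{L^2_x}^2$ on the left and leaving $c\tau\,\E[\|\nabla\bfu_0\|_{L^2_x}^2]\le c\tau\,\E[\mathcal E(\bfu_0)]$; the noise term is bounded by $c\tau(1+\E[\sup_s\mathcal E(\bfu(s))])$ via \eqref{eq:phi1b}; the stochastic integral is handled by BDG, whose bracket is $\int_0^t \|\bfu-\bfu_0\|_{L^2_x}^2\|\Phi(\bfu)\|_{L_2(\mathfrak U;L^2_x)}^2\,\ds$, absorbing $\tfrac12\E[\sup_{s\leq\tau}\|\bfu(s)-\bfu_0\|_{L^2_x}^2]$ on the left and leaving again a term of size $c\tau(1+\E[\sup_s\mathcal E(\bfu(s))])$. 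Taking expectations and the supremum over $s\in[0,\tau]$, and invoking part (a), gives \eqref{eq:inerror} with a constant independent of $\tau$.

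The main obstacle is the rigorous justification of It\^o's formula for $\mathcal E(\bfu)$ in part (a), since $\mathcal E$ is not of class $C^2$ on $L^2_x$ but only on $W^{1,2}_x$ and the variational solution lives in $W^{1,2}_x$ only for a.e.\ time; this is exactly the point where one appeals to the theory of variational strong solutions to gradient-flow-type SPDEs (\cite{Ge,PrRo}), where such an energy identity, \eqref{ac-3s}, is established — so I would cite it rather than reprove it. A secondary subtlety is the careful bookkeeping of which terms get absorbed onto the left-hand side versus which feed into Gronwall; in part (b) one must be slightly careful that the constant genuinely does not deteriorate as $\tau\to0$, which it does not because every right-hand contribution carries an explicit factor $\tau$ after the time integrals are estimated on $[0,\tau]$.
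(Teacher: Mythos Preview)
Your overall strategy matches the paper's: It\^o's formula applied to $\mathcal E(\bfu(t))$ for part (a) and to $\tfrac12\|\bfu(t)-\bfu_0\|_{L^2_x}^2$ for part (b), followed by BDG and Gronwall. Part (b) is essentially correct and mirrors the paper (modulo an immaterial sign slip in the $(\nabla\bfu,\nabla\bfu_0)$ term).

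There is, however, a genuine gap in part (a): your bound on the It\^o correction term is wrongly justified. You write that since $f'(\bfu)=3\bfu^2-1\ge -1$, the correction term $\tfrac12\sum_k\int_{\mt} D^2\mathcal E(\bfu)(\Phi(\bfu)e_k,\Phi(\bfu)e_k)\,\dd x$ is bounded by $c\|\Phi(\bfu)\|_{L_2(\mathfrak U;W^{1,2}_x)}^2$. But the inequality $f'\ge -1$ is a \emph{lower} bound, while here you need an \emph{upper} bound on $\int f'(\bfu)\,|\Phi(\bfu)e_k|^2\,\dd x$; since $f'(\bfu)$ grows like $3\bfu^2$, this term is \emph{not} dominated by $\|\Phi(\bfu)e_k\|_{W^{1,2}_x}^2$. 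The correct argument (and this is what the paper does) combines $|f'(\bfu)|\le c(1+|\bfu|^2)$ with the linear growth $|\Phi(\bfu)e_k|^2\le c(1+|\bfu|^2)$ from \eqref{eq:phi} to get
\[
\sum_k\int_{\mt} f'(\bfu)\,|\Phi(\bfu)e_k|^2\,\dd x \le c\int_{\mt}(1+|\bfu|^2)^2\,\dd x \le c\int_{\mt}\bigl(F(\bfu)+1\bigr)\,\dd x \le c\bigl(\mathcal E(\bfu)+1\bigr),
\]
so that the correction term is indeed $\le c(1+\mathcal E(\bfu))$ --- the bound you claim, but via the right mechanism. Once this is fixed, the rest of your argument for (a) goes through as written.
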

\begin{proof}
Part (a) is standard and similar results can be found in the literature, see, \emph{e.g.}, \cite{MP1}). For the reader's convenience we decided to give the details nevertheless.
Applying It\^{o}'s formula (this can be justified by truncating the function $F$ and applying the It\^o-formula in Hilbert spaces from \cite[Theorem 4.17]{PrZa}.) to the function $t\mapsto \mathcal E\bigl(\bfu(t)\bigr)$ yields
\begin{align*}
&{\mathcal E}\bigl( u(t)\bigr) + 
  \int_0^t\big\| D{\mathcal E}\bigl(u(s)\bigr)\big\|_{L^2_x}^2\,{\rm d}s \\ \nonumber
  & = {\mathcal E}\bigl( u_0\bigr)+\int_{\mt}\int_0^t D\mathcal E(u(s))\,\Phi(u(s))\,\dd W\dx\\
&\quad  + \frac{1}{2}\sum_{k\geq1} \int_0^t \int_{{\mathbb T}^d} D^2 {\mathcal E}(u(s))\bigl( \Phi(u(s))e_k,\Phi(u(s))e_k\bigr) \, {\rm d}x\,{\rm d}s\\
&=:(\mathrm{I})+(\mathrm{II}).
\end{align*}
We have  by \eqref{eq:phi}
\begin{align*}
(\mathrm{II})&=\frac{1}{2}\sum_{k\geq1} \int_0^t \bigl\| \nabla\Phi(u(s))e_k\bigr\|_{L^2_x}^2\,{\rm d}s+\frac{1}{2}\sum_{k\geq1} \int_0^t \int_{{\mathbb T}^d}f'(u(s))|\Phi(u(s))e_k|^2 \, {\rm d}x\,{\rm d}s\\
&\leq\,c \int_0^t \int_{{\mathbb T}^d}\bigl(|u|^2+\bigl| \nabla u\bigr|^2\bigr) \, {\rm d}x\,{\rm d}s+c\int_0^t \int_{{\mathbb T}^d}\big(|u|^2+1\big)^2 \, {\rm d}x\\
&\leq\,c \int_0^t \int_{{\mathbb T}^d}\bigl| \nabla u\bigr|^2 \, {\rm d}x\,{\rm d}s+ c\int_0^t \int_{{\mathbb T}^d}\big(F(u)+1\big) \, {\rm d}x\,{\rm d}s\\
&\leq\,c\bigg(\int_0^t\mathcal E(u(s))\,{\rm d}s+1\bigg).
\end{align*}
Similarly, by Burkholder-Davis-Gundy inequality,
\begin{align*}
\E\bigg[\bigg(\sup_{0\leq t\leq T}(\mathrm{I})\bigg)^q\bigg]&\leq\,c\,\E\bigg[\bigg(\sum_{k\geq1}\int_0^t\bigg( \int_{\mt}D\mathcal E(u(s))\,\Phi(u(s))e_k\dx\bigg)^2\,{\rm d}s\bigg)^{\frac{q}{2}}\bigg]\\
&\leq\,c\,\E\bigg[\bigg(\int_0^t\|D\mathcal E(u(s))\|_{L^2_x}^2\big( \|u(s)\|_{L^2_x}^2+1\big)\,{\rm d}s\bigg)^{\frac{q}{2}}\bigg]\\
&\leq\,c\,\E\bigg[\bigg(\sup_{0\leq s\leq t}\mathcal E(u(s))+1\bigg)^q\bigg]+c\,\E\bigg[\bigg(\int_0^t\|D\mathcal E(u(s))\|^2_{L^2_x}\,{\rm d}s\bigg)^q\bigg].
\end{align*}
For the second estimate we apply It\^{o}'s formula to $t\mapsto \frac{1}{2}\|{\bfu}(t)-\bfu_0\|_{L^2_x}^2$
and obtain similarly to \eqref{eq:2304}
 \begin{align*}
\tfrac{1}{2}&\|\bfu(\tau)-\bfu_0\|^2_{L^2_x}+\int_0^\tau\|\nabla\bfu(s)\|^2_{L^2_x}\,{\rm d}s\\
&=\int_0^\tau\int_{\mt}\nabla\bfu(s):\nabla\bfu_0\dx\,{\rm d}s-\int_0^\tau\int_{\mt}f(\bfu(s))\,(\bfu(s)-\bfu_0)\dx\,{\rm d}s
\\&\quad +\int_{\mt}\int_0^\tau(\bfu(s)-\bfu_0)\cdot\Phi(\bfu(s))\,\dd W\dx+\tfrac{1}{2}\int_0^\tau\|\Phi(\bfu(s))\|^2_{L_2(\mathfrak U;L^2(\mt))}\,{\rm d}s.
\end{align*}
We clearly have
\begin{align*}
\E\biggl[\int_0^\tau\int_{\mt}\nabla\bfu(s):\nabla\bfu_0\dx\,{\rm d}s\biggr]&\leq\,\bigg(\E\biggl[\int_0^\tau\|\nabla\bfu(s)\|_{L^2_x}^2\,{\rm d}s\biggr]\bigg)^{\frac{1}{2}}\bigg(\E\biggl[\int_0^\tau\|\nabla\bfu_0\|_{L^2_x}^2\,{\rm d}s\biggr]\bigg)^{\frac{1}{2}}\leq\,c\tau
\end{align*}
using \eqref{lem:}.
Now \eqref{lem:} implies
\begin{align*}
{\mathbb E}\Bigl[&\int_0^\tau\int_{\mt}f(\bfu(s))\,(\bfu(s)-\bfu_0)\dx\,{\rm d}s\Bigr]\leq\,c\tau\E\bigg[\sup_{0\leq s\leq \tau}\|\bfu(s)\|^{q+1}_{L^{4}_x}+\|\bfu_0\|_{L^{4}_x}^{4}\bigg]\\
&\leq \,c\tau\E\bigg[\sup_{0\leq s\leq \tau}\mathcal E(u(s))+\mathcal E(u_0)+1\bigg]\leq\,c\tau.
\end{align*}
Finally,
\begin{align*}
\E\Bigl[\int_0^\tau\|\Phi(\bfu(s))\|^2_{L_2(\mathfrak U;L^2_x)}\,{\rm d}s\Bigr]&\leq\,c\, {\mathbb E}\biggl[\int_0^\tau\big( \|\bfu(s)\|_{L^2_x}^2+1 \big)\,{\rm d}s\biggr]\\&\leq\, c\tau\bigg({\mathbb E}\biggl[\sup_{0\leq s\leq \tau}\|\bfu(s)\|_{L^2_x}^2\biggr]+1\bigg)\leq\,c\tau
\end{align*}
by \eqref{eq:phi} and \eqref{lem:}.
Arguing as above in the proof of \eqref{lem:} we have
\begin{align*}
\E\bigg[\sup_{0\leq t\leq \tau}\bigg|\int_{\mt}&\int_0^t(\bfu(s)-\bfu_0)\cdot\Phi(\bfu(s))\,\dd W\dx\bigg|\bigg]\\
&\leq\,c\,\E\bigg[\max_{0\leq s\leq \tau}\|\bfu(s)-\bfu_0\|_{L^2_x}\bigg(\int_0^\tau\|\Phi(\bfu(s))\|^2_{L_2(\mathfrak U,L^2_x)}\,{\rm d}s\bigg)^{\frac{1}{2}}\bigg]\\
&\leq\,\kappa\,\E\bigg[\sup_{0\leq s\leq \tau}\|\bfu(s)-\bfu_0\|^2_{L^2_x}\bigg]+c(\kappa)\E\bigg[\int_0^\tau\|\Phi(\bfu(s))\|^2_{L_2(\mathfrak U,L^2_x)}\,{\rm d}s\bigg]\\
&\leq\,\kappa\,\E\bigg[\sup_{0\leq s\leq \tau}\|\bfu(s)-\bfu_0\|^2_{L^2_x}\bigg]+c(\kappa)\E\bigg[\int_0^\tau\big(1+\|\bfu(s)\|^2_{L^2_x}\big)\,{\rm d}s\bigg]\\
&\leq\,\kappa\,\E\bigg[\sup_{0\leq s\leq \tau}\|\bfu(s)-\bfu_0\|^2_{L^2_x}\bigg]+c(\kappa)\tau\E\bigg[\sup_{0\leq s\leq \tau}\|\bfu\|^2_{L^2_x}+1\bigg]\\
&\leq\,\kappa\,\E\bigg[\sup_{0\leq s\leq \tau}\|\bfu(s)-\bfu_0\|^2_{L^2_x}\bigg]+c(\kappa)\tau
\end{align*} 
using \eqref{lem:}. Plugging all together shows \eqref{eq:inerror}.
\end{proof}
The following lemma collects moment bounds (\emph{i.e.}, \textcolor{red}{$p\geq 2$}) in higher norms and is reminiscent of  \cite[Lemma 3.1]{MP1}.
\begin{lemma}\label{lemma:3.1B} 
 Suppose that $\Phi$ satisfies \ref{N1} {\bf (a)} and let $\bfu$ be the weak pathwise solution to \eqref{eq:SNS}.
 \begin{enumerate}
 \item[(a)] Assume that $\bfu_0\in L^{p}(\Omega,L^{p}(\mt))$ for some \textcolor{red}{$p\geq 2$}. Then we have
\begin{align}
\label{lem:B1}\E\bigg[\sup_{0\leq s\leq T}\|\bfu(s)\|^{p}_{L^{p}_x}+\int_0^T\|\nabla|\bfu(s)|^{\frac{p}{2}}\|^2_{L^2_x}\,\mathrm{ds}\bigg]&\leq\,c\,\E\big[\|\bfu_0\|^{p}_{L^{p}_x}+1\big].
\end{align}
 \item[(b)] Assume that $\bfu_0\in L^{p}(\Omega,W^{1,p}(\mt))$ for some $p>1$. Then we have
\begin{align}
\label{lem:B2}\E\bigg[\sup_{0\leq s\leq T}\|\nabla\bfu(s)\|^{p}_{L^{p}_x}+\int_0^T\|\nabla|\nabla\bfu(s)|^{\frac{p}{2}}\|^2_{L^2_x}\,\mathrm{ds}\bigg]&\leq\,c\,\E\big[\|\nabla\bfu_0\|^{p}_{L^{p}_x}+1\big].
\end{align}
 \item[(c)] Assume that $\bfu_0\in L^{p}(\Omega,W^{2,p}(\mt))\cap L^{2p}(\Omega,W^{1,p}(\mt))$ hold and that $\Phi$ satisfies additionally \ref{N1} {\bf (b)}. Then we have
\begin{align}
\label{lem:B3}\E\bigg[\sup_{0\leq s\leq T}\|\nabla^2\bfu(s)\|^{p}_{L^{p}_x}+\int_0^T\|\nabla|\nabla^2\bfu(s)|^{\frac{p}{2}}\|^2_{L^2_x}\,\mathrm{ds}\bigg]&\leq\,c\,\E\big[\|\nabla^2\bfu_0\|^{p}_{L^{p}_x}+\|\nabla\bfu_0\|^{2p}_{L^{2p}_x}+1\big].
\end{align}
\end{enumerate}
Here $c=c(T,\Phi,p)>0$.
\end{lemma}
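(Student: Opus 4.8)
The plan is to establish \eqref{lem:B1}, \eqref{lem:B2} and \eqref{lem:B3} by one and the same energy argument, carried out successively for $k=0,1,2$. For each $k$ one applies It\^o's formula (after a standard regularisation, e.g. truncation together with the Hilbert-space It\^o formula \cite[Theorem~4.17]{PrZa}, as in the proof of Lemma~\ref{lemma:3.1A}) to $t\mapsto\int_{\mt}|\nabla^ku(t)|^p\dx$, where the evolution of $\nabla^ku$ is obtained by differentiating \eqref{eq:SNS} in space $k$ times, using $\nabla f(u)=f'(u)\nabla u$, $\nabla^2f(u)=f''(u)\,\nabla u\otimes\nabla u+f'(u)\nabla^2u$, and $\nabla(\Phi(u)e_k)=\nabla_x\bfg_k(\cdot,u)+\nabla_\xi\bfg_k(\cdot,u)\nabla u$ (with the analogous second-order chain rule for $k=2$). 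In every case the parabolic term $\Delta\nabla^ku$ produces, after integration by parts, a contribution bounded above by $-\tfrac{4(p-1)}{p}\|\nabla|\nabla^ku|^{p/2}\|_{L^2_x}^2$, which is moved to the left-hand side and supplies the dissipative term in the statement; the contribution of $f'$ is absorbed by the one-sided Lipschitz bound $f'(\xi)=3\xi^2-1\geq-1$, so that $-p\int_{\mt}f'(u)|\nabla^ku|^p\dx\leq p\int_{\mt}|\nabla^ku|^p\dx$ (the quadratic part of $f'$ being favourable); and the stochastic integral is handled by the Burkholder--Davis--Gundy inequality, keeping the $\ell^2$-summation inside the spatial integral so that the pointwise bounds \eqref{eq:phi}, \eqref{eq:phi2} apply, followed by Young's inequality to absorb $\tfrac12\E\bigl[\sup_{[0,T]}\|\nabla^ku\|_{L^p_x}^p\bigr]$. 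A Gronwall argument then closes the estimate, the bound at level $k$ invoking those at the lower levels.

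For part (a) the drift of $\int_{\mt}|u|^p\dx$ equals $-\tfrac{4(p-1)}{p}\|\nabla|u|^{p/2}\|_{L^2_x}^2-p\int_{\mt}|u|^{p+2}\dx+p\int_{\mt}|u|^p\dx$, so the cubic term is only helpful; the It\^o correction $\tfrac{p(p-1)}{2}\sum_k\int_{\mt}|u|^{p-2}|\Phi(u)e_k|^2\dx$ is bounded by $c\int_{\mt}|u|^{p-2}(1+|u|^2)\dx\leq c+c\int_{\mt}|u|^p\dx$ using \eqref{eq:phi} and Young, and BDG together with Gronwall yields \eqref{lem:B1}. Part (b) is the same argument with $v:=\nabla u$ in place of $u$: the drift contributes at most $-\tfrac{4(p-1)}{p}\|\nabla|\nabla u|^{p/2}\|_{L^2_x}^2+p\int_{\mt}|\nabla u|^p\dx$ by $f'\geq-1$, the It\^o correction is bounded by $c\int_{\mt}|\nabla u|^{p-2}(1+|u|^2+|\nabla u|^2)\dx\leq\tfrac14\int_{\mt}|\nabla u|^p\dx+c\int_{\mt}(1+|u|^p+|\nabla u|^p)\dx$ via \eqref{eq:phi} and Young, and the remaining $\int_{\mt}|u|^p\dx$ is absorbed by part (a); BDG and Gronwall give \eqref{lem:B2}.

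For part (c) the same scheme with $w:=\nabla^2u$ produces, besides the dissipative term and $-p\int_{\mt}f'(u)|w|^p\dx\leq p\int_{\mt}|w|^p\dx$, one genuinely new drift term $-p\int_{\mt}|w|^{p-2}f''(u)\,\bigl(w:\nabla u\otimes\nabla u\bigr)\dx$; since $|f''(u)|=6|u|$, Young's inequality bounds it by $\tfrac14\int_{\mt}|w|^p\dx+c\int_{\mt}|u|^p|\nabla u|^{2p}\dx$. The It\^o correction, obtained from the second-order chain rule and estimated with \eqref{eq:phi}, \eqref{eq:phi2} --- this is precisely where assumption \ref{N1}~{\bf (b)} (equivalently \eqref{eq:phi2b}) is needed --- together with Young's inequality, is bounded by $\tfrac14\int_{\mt}|w|^p\dx+c\int_{\mt}\bigl(1+|u|^p+|\nabla u|^p+|\nabla u|^{2p}\bigr)\dx$. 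Thus the whole argument reduces to controlling $\E\int_0^T\int_{\mt}\bigl(|\nabla u|^{2p}+|u|^p|\nabla u|^{2p}\bigr)\dx\dt$ by the data.

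The term $|\nabla u|^{2p}$ is handled by part (b) applied with $2p$ in place of $p$; via the Sobolev embedding $W^{2,p}(\mt)\hookrightarrow W^{1,2p}(\mt)$, which holds for $d\leq3$, this is controlled by the data and accounts for the term $\|\nabla\bfu_0\|_{L^{2p}_x}^{2p}$ on the right of \eqref{lem:B3}. The mixed term $\int_{\mt}|u|^p|\nabla u|^{2p}\dx$ is estimated by H\"older's inequality together with the Gagliardo--Nirenberg interpolation inequality, interpolating $\nabla u$ between its $L^\infty_tL^{2p}_x$-bound from part (b) at exponent $2p$ and the bound on $|\nabla u|^{p/2}$ in $L^2_t\dot W^{1,2}_x$ furnished by the dissipative term of part (b) (equivalently an $L^p_tL^{q}_x$-bound on $\nabla u$ with $q$ the Sobolev exponent of $d$), and using the space integrability of $u$ from part (a); after integration over $(0,T)$ this is again bounded by the data. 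With these two estimates in hand, BDG and Gronwall yield \eqref{lem:B3}. I expect this final step --- arranging the space--time exponents so that the mixed term is integrable in time with the claimed data dependence --- to be the main technical obstacle, and it is exactly what forces the additional hypothesis $\bfu_0\in L^{2p}(\Omega;W^{1,p}(\mt))$ and the restriction to $d\leq3$.
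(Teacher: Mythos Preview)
Your overall strategy coincides with the paper's: apply It\^o's formula to $\tfrac{1}{p}\int_{\mt}|\nabla^k u|^p\,\dx$ for $k=0,1,2$, exploit $f'\ge-1$ for the principal nonlinear contribution, bound the It\^o correction via \eqref{eq:phi}--\eqref{eq:phi2}, and close with Burkholder--Davis--Gundy plus Gronwall. Parts (a) and (b) are essentially the paper's argument.

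For part (c) there is a gap. You correctly isolate the new drift term $-p\int_{\mt}|w|^{p-2}w\,f''(u)\,(\nabla u\otimes\nabla u)\,\dx$ (with $w=\nabla^2u$), but you then discard the favourable quadratic piece $-3\int_{\mt}u^2|w|^p\,\dx$ coming from $f'(u)=3u^2-1$---despite noting it is ``favourable''---and estimate the cross term separately by Young, which manufactures the mixed integral $\int_{\mt}|u|^p|\nabla u|^{2p}\,\dx$. Your proposed control of this term via H\"older and Gagliardo--Nirenberg is only sketched; a moment count shows that closing it under the stated hypotheses would require higher-moment versions of (a)--(b) and a space--time interpolation you do not actually carry out.

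The remedy is to \emph{keep} the good term and use it to absorb the cross term: since $|f''(u)|=6|u|$,
\[
6|u|\,|w|^{p-1}|\nabla u|^2 \;=\; 6\bigl(|u|\,|w|^{p/2}\bigr)\bigl(|w|^{p/2-1}|\nabla u|^2\bigr)\;\le\; 3u^2|w|^p+3|w|^{p-2}|\nabla u|^4,
\]
and the first summand is cancelled by $-3\int_{\mt} u^2|w|^p\,\dx$. One more Young on $|w|^{p-2}|\nabla u|^4$ leaves only $\int_{\mt}|w|^p\,\dx$ (Gronwall) and $\int_{\mt}|\nabla u|^{2p}\,\dx$, the latter being exactly the term already forced by the It\^o correction from $D_\xi^2\Phi$. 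Both are handled by part (b) at exponent $2p$, which is precisely what produces the additional hypothesis on $u_0$ and the term $\|\nabla u_0\|_{L^{2p}_x}^{2p}$ on the right of \eqref{lem:B3}. This is how the paper proceeds (its ``the nonlinearity can be handled as in (b)'' should be read this way), and with this one-line fix your argument closes without any interpolation.
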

\begin{proof}
Ad~{\bf (a)}.
We apply It\^{o}'s formula (see \cite[Thm. 4.17]{PrZa}) to the functional $t\mapsto \frac{1}{p}\|\bfu(t)\|_{L^p_x}^p$ and obtain
 \begin{align}
\nonumber
 \tfrac{1}{p}\|\bfu(t)\|^p_{L^p_x}&+\int_0^t\int_{\mt}|\bfu(s)|^{p-2}|\nabla u(s)|^2\dx\ds+\int_0^t\int_{\mt}|\bfu(s)|^{p-2}f(\bfu(s))\,\bfu(s)\dxs\\
\label{eq:2304}&=
\tfrac{1}{p}\|\bfu_{0}\|^p_{L^p_x}
+\int_{\mt}\int_0^t|\bfu|^{p-2}\bfu\,\Phi(\bfu)\,\dd W\dx\\&\quad+\tfrac{p-1}{2}\sum_{i\geq 1}\int_0^t\int_{\mt}|\bfu(s)|^{p-2}|\Phi(\bfu(s))e_i|^2\dx\ds.
\nonumber
\end{align}
We have
\begin{align*}
\int_0^t\int_{\mt}|\bfu(s)|^{p-2}&|\nabla\bfu(s)|^2\dx\ds+\int_0^t\int_{\mt}|\bfu(s)|^{p-2}f(\bfu(s))\,u(s)\dxs\\
&\geq c\bigg(\int_0^t\int_{\mt}|\nabla|\bfu(s)|^{\frac{p}{2}}|^2\dx\ds-\int_0^t\|\bfu(s)\|^p_{L^p_x}\ds\bigg).
\end{align*}
On account of \eqref{eq:phi} we obtain
\begin{align*}
\sum_{i\geq 1}\int_0^t\int_{\mt}&|\bfu(s)|^{p-2}|\Phi(\bfu(s))e_i|^2\dx\ds
\leq\,c\int_0^t\big(\|\bfu(s)\|_{L^p_x}^{p}+1\big)\ds.
\end{align*}
Finally,
we use Burkholder-Davis-Gundy inequality,
\eqref{eq:phi} and Young's inequality to conclude
\begin{align*}
\E\bigg[\sup_{0\leq t\leq T}&\bigg|\int_{\mt}\int_0^t|u|^{p-2}\bfu\,\Phi(\bfu)\,\dd W\dx\bigg|\bigg]\\&\leq\,c\,\E\bigg[\bigg(\sum_{i\geq1}\int_{0}^{T}\bigg(\int_{\mt}|\bfu(s)|^{p-2}\bfu(s)\,\Phi(\bfu(s))e_i\dx\bigg)^2\ds\bigg)^{\frac{1}{2}}\bigg]\\
&\leq\,c\,\E\bigg[\bigg(\int_0^T\big(\|\bfu(s)\|_{L^p_x}^{2p}+1\big)\ds\bigg)^{\frac{1}{2}}\bigg]\\
&\leq\,c\,\E\bigg[\bigg(\sup_{0\leq s\leq T}\|\bfu(s)\|_{L^p_x}^{p}\int_0^T\|\bfu(s)\|_{L^p_x}^{p}\ds+1\bigg)^{\frac{1}{2}}\bigg]\\
&\leq\,\kappa\,\E\bigg[\sup_{0\leq s\leq T}\|\bfu(s)\|_{L^p_x}^{p}\bigg]+c(\kappa)\E\bigg[\int_0^T\|\bfu(s)\|_{L^p_x}^{p}\ds+1\bigg],
\end{align*} 
where Gronwall's lemma comes again into play.
Combining everything, choosing $\kappa$ small enough and using Gronwall's lemma yields the claim.

Ad~{\bf (b)}. Differentating \eqref{eq:SNS} with respect to $\gamma\in\{1,2,3\}$ yields
\begin{align}\label{eq:DSNS}
\dd\partial_\gamma\bfu= \Bigl(\Delta\partial_\gamma\bfu - f'(u)\partial_\gamma u\Bigr)\dt+\partial_\gamma\{\Phi(\bfu)\dd W\}.
\end{align}
Applying It\^o's formula to $t\mapsto \frac{1}{p}\| \partial_\gamma\bfu(t)\|_{L^p_x}^p$
 we obtain
 \begin{align}
\label{eq:2304}
\begin{aligned}
 \tfrac{1}{p}\|\partial_\gamma\bfu(t)\|_{L^p_x}^p&+\int_0^t\int_{\mt}|\partial_\gamma\bfu(s)|^{p-2}|\partial_\gamma\nabla\bfu(s)|^2\dx\ds\\&\quad+\int_0^t\int_{\mt}|\partial_\gamma\bfu(s)|^{p-2}f'(\bfu(s))|\partial_\gamma\bfu(s)|^2\dxs\\
&=
\tfrac{1}{p}\|\partial_\gamma\bfu_{0}\|^p_{L^p_x}
+\int_{\mt}\int_0^t|\partial_\gamma\bfu|^{p-2}\partial_\gamma\bfu\,\partial_\gamma\{\Phi(\bfu)\,\dd W\}\dx\\&\quad+\tfrac{p-1}{2}\sum_{i\geq 1}\int_0^t\int_{\mt}|\partial_\gamma\bfu(s)|^{p-2}|\partial_\gamma\{\Phi(\bfu(s))e_i\}|^2\dx\ds.
\end{aligned}
\end{align}
We have
\begin{align*}
\int_0^t\int_{\mt}|\partial_\gamma\bfu(s)|^{p-2}&|\nabla\partial_\gamma\bfu(s)|^2\dx\ds+\int_0^t\int_{\mt}|\partial_\gamma\bfu(s)|^{p-2}f'(\bfu(s))\,|\partial_\gamma\bfu(s)|^2\dxs\\
&\geq c\bigg(\int_0^t\big\|\nabla|\partial_\gamma\bfu(s)|^{\frac{p}{2}}\|^2_{L^2_x}\ds-\int_0^t\|\partial_\gamma\bfu(s)\|^p_{L^p_x}\ds\bigg).
\end{align*}
On account of \eqref{eq:phi} we obtain
\begin{align*}
\sum_{i\geq 1}\int_0^t\int_{\mt}&|\partial_\gamma\bfu(s)|^{p-2}|\partial_\gamma\{\Phi(\bfu(s))e_i\}|^2\dx\ds\\
&\leq\,c\int_0^t\big(\|\partial_\gamma\bfu(s)\|^{p}_{L^p_x}+\|u(s)\|_{L^p_x}^p+1\big)\ds\leq\,c\int_0^t\big(\|\nabla\bfu(s)\|_{L^p_x}^{p}+1\big)\ds.
\end{align*}
Finally,
we use Burkholder-Davis-Gundy inequality,
\eqref{eq:phi} and Young's inequality to conclude
\begin{align*}
\E\bigg[\sup_{0\leq t\leq T}&\bigg|\int_{\mt}\int_0^t|\partial_\gamma u|^{p-2}\partial_\gamma\bfu\,\partial_\gamma\{\Phi(\bfu)\}\,\dd W\dx\bigg|\bigg]\\&\leq\,c\,\E\bigg[\bigg(\sum_{i\geq1}\int_{0}^{T}\bigg(\int_{\mt}|\partial_\gamma\bfu(s)|^{p-2}\partial_\gamma\bfu(s)\,\partial_\gamma\{\Phi(\bfu(s))\}e_i\dx\bigg)^2\ds\bigg)^{\frac{1}{2}}\bigg]\\
&\leq\,c\,\E\bigg[\bigg(\int_0^T\big(\|\nabla\bfu(s)\|_{L^p_x}^{2p}+1\big)\ds\bigg)^{\frac{1}{2}}\bigg]\\
&\leq\,c\,\E\bigg[\bigg(\sup_{0\leq s\leq T}\|\nabla\bfu(s)\|_{L^p_x}^{p}\int_0^T\|\nabla\bfu(s)\|_{L^p_x}^{p}\ds+1\bigg)^{\frac{1}{2}}\bigg]\\
&\leq\,\kappa\,\E\bigg[\sup_{0\leq s\leq T}\|\nabla\bfu(s)\|_{L^p_x}^{p}\bigg]+c(\kappa)\E\bigg[\int_0^T\|\nabla\bfu(s)\|_{L^p_x}^{p}\ds+1\bigg],
\end{align*} 
where Gronwall's lemma comes again into play.
Combining everything, choosing $\kappa$ small enough and using Gronwall's lemma yields the claim.

Ad~{\bf (c)}. Differentating \eqref{eq:DSNS} again yields
\begin{align}\label{eq:D2SNS}
\begin{aligned}
\dd\partial_{\gamma_1}\partial_{\gamma_2}\bfu&= \Bigl(\Delta\partial_{\gamma_1}\partial_{\gamma_2}\bfu - f'(u)\partial_{\gamma_1}\partial_{\gamma_2} u\Bigr)\dt+\partial_{\gamma_1}\partial_{\gamma_2}\{\Phi(\bfu)\}\dd W\\
&\quad- f''(u)\partial_{\gamma_1} u\partial_{\gamma_2} u\dt,
\end{aligned}
\end{align}
where $\gamma_1,\gamma_2\in\{1,2,3\}$. The nonlinearity can be handled as in {\bf (b)}, but for the noise we obtain the two terms
\begin{align*}
\partial_{\gamma_1}&\partial_{\gamma_2}\Phi(\bfu)\dd W+
\partial_{\gamma_1}D_\xi\Phi(\bfu)\partial_{\gamma_2}u\dd W+
\partial_{\gamma_2}D_\xi\Phi(\bfu)\partial_{\gamma_1}u\dd W\\&+D^2_\xi\Phi(\bfu)\partial_{\gamma_1}\partial_{\gamma_2}u\dd W+D^2_\xi\Phi(\bfu)\partial_{\gamma_1}u\partial_{\gamma_2}u\dd W.
\end{align*}
The first four of them can be estimated similarly to {\bf (b)} using \eqref{eq:phi2} but the last one requires more care (note that it disappears for affine linear noise). For the correction term we have 
\begin{align*}
\sum_{i\geq 1}\int_0^t\int_{\mt}&|\partial_{\gamma_1}\partial_{\gamma_2}\bfu(s)|^{p-2}|D^2_\xi\Phi(\bfu(s))e_i\partial_{\gamma_1}u(s)\,\partial_{\gamma_2}u(s)|^2\dx\ds\\
&\leq\,c\int_0^t\big(\|\nabla^2\bfu(s)\|_{L^p_x}^{p}+\|\nabla u(s)\|_{L^{2p}_x}^{2p}+1\big)\ds
\end{align*}
on account of \ref{N1}.
Applying expectations and using part {\bf (b)} (with the assumption $u_0\in L^{2p}(\Omega;W^{1,p}(\mt))$ the second term is bounded, whereas the first one can be handled by Gronwall's lemma. The supremum of the corresponding stochastic integral
\begin{align*}
\int_{\mt}\int_0^t|\partial_{\gamma_1}\partial_{\gamma_2} u|^{p-2}D^2_\xi\Phi(\bfu)\partial_{\gamma_1}u\,\partial_{\gamma_2}u\,\dd W\dx
\end{align*}
can be estimated by similar means.
\end{proof}


%

\subsection{The semigroup of the Laplace operator}
In this subsection we recall some well-known facts concerning the Laplace operator and (the discretisation of) its semigroup. We denote by $\mathcal L(\mathbb X,\mathbb Y)$ the space of bounded linear operators between two Banach spaces $\mathbb X$ and $\mathbb Y$ and write $\mathcal L(\mathbb X)$ for $\mathcal L(\mathbb X,\mathbb X)$. 
It is classical that
there is a basis of $L^2(\mt)$ consisting of eigenfunctions $(v_j)_{j\geq1}$ of $\mathcal A=-\Delta $ with positive eigenvalues $(\nu_j)_{j\geq1}$ such that $\nu_j\rightarrow\infty$ as $j\rightarrow\infty$. We can use that to define powers of $\mathcal A$ by setting
\begin{align*}
\mathcal A^r\bfu=\sum_{j\geq1}\nu_j^r\langle\bfu,v_j\rangle_{L^2_x}v_j,\quad r\geq0,
\end{align*}
for functions $\bfu$ from 
\begin{align*}
W^{r,2}(\mt):=\bigg\{\bfu=\sum_{j\geq1}\langle\bfu,v_j\rangle_{L^2_x}v_j:\,\sum_{j\geq1}\nu_j^{r}\langle\bfu,v_j\rangle_{L^2_x}^2<\infty\bigg\}.
\end{align*}
Similarly, we can define for $r<0$ the space $W^{r,2}(\mt)$ as the closure
of $L^2$ with respect to the norm
\begin{align*}
\|\bfu\|^2_{W^{r,2}}:=\sum_{j\geq1}\nu_j^r\langle\bfu,v_j\rangle_{L^2_x}^2.
\end{align*}
It is classical that $\mathcal A$ generates a strongly continuous semigroup and it is well-known that its discretisation $\mathcal S_{\tau}=(\mathrm{Id}+\tau\mathcal A)^{-1}$ satisfies
\begin{align}
\label{eq:S1}
\|\mathcal A^{\beta}\mathcal S_{\tau}^k\|_{\mathcal L(W^{r,2}_x)}&\leq\,c_r(k\tau)^{-\beta}\quad \forall\beta\geq0,\quad k\in\N,\quad r\geq0\\
\label{eq:S2}\|\mathcal A^{\beta}\mathcal S(t)\|_{\mathcal L(L^2_x)}&\leq\,ct^{-\beta}\quad \forall\beta\geq0,\\
\label{eq:S3}\|\mathcal A^{-\beta}(\mathrm{Id}-\mathcal S_{\tau})\|_{\mathcal L(L^2_x)}&\leq\,c\tau^{\beta}\quad \forall\beta\in[0,1].
\end{align}
Note that \eqref{eq:S2} follows from $\sup_t\|t\mathcal A\mathcal S(t)\|_{\mathcal L(L^2_x)}<\infty$, while \eqref{eq:S3} is a consequence of $\mathrm{Id}-\mathcal S_{\tau}=\tau\mathcal A \mathcal S_{\tau}$ and \eqref{eq:S1} with $k=1$. Let us also mention the following estimate which controls the error between $\SSS(\tau)$ and $\SSS_{\tau}$
\begin{align}
\label{eq:S4}\|\mathcal A^{\beta}(\SSS(\tau)-\mathcal S_{\tau})\|_{\mathcal L(L^2_x)}\leq\,c\tau^{-\beta}\quad \forall\beta\in[-2,\infty).
\end{align}
Finally, \eqref{eq:S4} easily implies
\begin{align}
\label{eq:S5}\|\mathcal A^{\beta}(\SSS(\tau)-\mathcal S_{\tau})\|_{\mathcal L(W^{r,2}_x,L^2_x)}\leq\,c\tau^{-\beta+\textcolor{blue}{r/2}}\quad \forall\beta,r:\,\beta-\textcolor{blue}{r/2}\in[-2,\infty).
\end{align}

\subsection{Malliavin calculus}\label{sec:mal}
We recall some basic facts from Malliavin calculus, see \cite{Nu} for a thorough introduction.
Given a cylindrical $(\mathfrak F_t)$-Wiener process as in Section \ref{sec:prob}, a smooth \textcolor{blue}{real-valued} function
$F$ defined on $\mathfrak U^n$ and smooth random variables with values in
$\psi_1,\dots,\psi_n\in L^2(0,T;\mathfrak U)$ we set
\begin{align*}
\mathcal D_t^{f}F\bigg(&\int_0^T\langle \psi_1,\dd W\rangle_{\mathfrak U},\dots,\int_0^T\langle \psi_n,\dd W\rangle_{\mathfrak U}\bigg)\\&=\sum_{i=1}^n\partial_iF\bigg(\int_0^T\langle \psi_1,\dd W\rangle_{\mathfrak U},\dots,\int_0^T\langle \psi_n,\dd W\rangle_{\mathfrak U}\bigg)\langle\psi_i(t),f\rangle_{\mathfrak U},\quad f\in\mathfrak U.
\end{align*}
This allows to define $\mathcal DF $ by $\mathcal DF(t)f=\mathcal D_t^{f}F$.
With this definition it can be shown $\mathcal D$ defines a closable operator on
$L^2(\Omega\times(0,T);\mathfrak U)$. A natural domain space for $\mathcal D$ is given by $\mathbb D^{1,2}$ which is the closure of the random variables (taking values in the set of smooth functions on $\mathfrak U^n$) with respect to the norm
\begin{align*}
\|F\|_{\mathbb D^{1,2}}^2:=\E\bigg[|F|^2+\int_0^T|\mathcal D_s F|^2\,\dd s\bigg].
\end{align*}
Note that $\mathcal D_{t} F=0$ for $t\geq t'$ provided $F$ is $(\mathfrak F_{t'})$-adapted. We have a version of the chain rule: For $\varphi\in C^1_b(\R)$ and $F\in\mathbb D^{1,2}$ we have $\varphi(F)\in \mathbb D^{1,2}$ together with the usual form $\mathcal D \varphi(F)=\varphi'(F)\mathcal D F$.\\
Most important for us is the Malliavin integration by parts formula
\begin{align*}
\E \biggl[ F\bigg(\int_0^T\langle \psi,\dd W\rangle_{\mathfrak U}\bigg)\bigg]=\E \bigg[\int_0^T\langle \mathcal D_s F,\psi(s)\rangle_{\mathfrak U}\,\dd s\bigg].
\end{align*}
It holds for all $F\in\mathbb D^{1,2}$ and all adapted $\psi\in L^2(\Omega\times(0,T);\mathfrak U)$ with $\E\big[\int_0^T\int_0^T|\mathcal D_s\psi(t)|^2\,\dd s\dt\big]<\infty$.

Similarly, we can define the Malliavin derivative of random variables taking values in $L^2(\mt)$. We denote by $\mathbb D^{1,2}(L^2(\mt))$ the set of random variables $G\in L^2(\Omega;(L^2(\mt)))$ with $G=\sum_{j\geq1} \alpha_j v_j$ with $(\alpha_j)\subset\mathbb D^{1,2}$ and $\sum_{j\geq 1}\int_0^T|\mathcal D_s G_j|^2\,\dd s<\infty$. Here $(v_j)_{j\geq 1}$ denotes a basis of $L^2(\mt)$ consisting of eigenfunctions of $-\Delta$. For $G\in \mathbb D^{1,2}(L^2(\mt))$ with $\bfU=\sum_{j\geq1} \alpha_j v_j$ and $f\in\mathfrak U$ we define
\begin{align*}
\mathcal D_s^f\bfU:=\sum_{j\geq1} (\mathcal D_s^f\alpha_j) v_j,\quad \mathcal D_s\bfU:=\sum_{j\geq1} (\mathcal D_s\alpha_j) v_j.
\end{align*}
Again the chain rule holds: For $\bfU\in \mathbb D^{1,2}(L^2(\mt))$ with $\bfU=\sum_{j\geq1} \alpha_j v_j$ and $\varphi\in C^1_b(L^2(\mt))$ we have
 $\varphi(\bfU)\in \mathbb D^{1,2}(L^2(\mt))$ with
 $\mathcal D \varphi(\bfU)=\langle D\varphi(\bfU),\mathcal D \bfU\rangle_{L^2_x}$.
Finally, for $\bfU\in\mathbb D^{1,2}(L^2(\mt))$, $u\in C^2_b(L^2(\mt))$ and $\bfpsi\in L^2\bigl(\Omega\times(0,T);L_2(\mathfrak U;L^2(\mt)\bigr)$ adapted we have
\begin{align}\label{eq:malpart}
\begin{aligned}
\E \biggl[\bigg\langle Du(\bfU),\int_0^T\bfpsi\,\dd W\bigg\rangle_{L^2_x}\biggr]
&=\E \biggl[\sum_{i\geq 1} \int_0^T D^2u(\bfU)(\mathcal D_s^{e_i}\bfU,\bfpsi(s)e_i)\,\dd s\biggr].
\end{aligned}
\end{align}

\section{Strong first Order Convergence Rate for additive Noise}\label{sec:add}
In \cite{MP1}, strong order convergence rate
${\mathcal O}(\sqrt{\tau})$ is shown for a time-implicit discretisation of (\ref{eq:SNS}) with multiplicative noise. In this section, this result is improved to
${\mathcal O}(\tau)$ in the presence of additive noise $\Phi (u) \equiv \Phi$ in (\ref{eq:SNS}). For this purpose, we
use the transform
\begin{equation}\label{transf1}
y(t) = u(t)- \int_0^t \Phi\, {\rm d}W(s) = u(t) - \Phi W(t) \qquad [ 0\leq t\leq T]
\end{equation}
to recast (\ref{eq:SNS}) into the form
\begin{align}\nonumber
\partial_t y &= \Delta y -f(y+ \Phi W) + \Delta [\Phi W] \\ \label{eq:SNS1}
&= \Delta y - \Bigl[f(y) +  3 y^2 \Phi W + 3 y \bigl[\Phi W\bigr]^2 \Bigr] + \Delta [\Phi W] - f\bigl( \Phi W\bigr)\,,
\end{align}
where we employ the following calculation for the last identity,
\begin{align*}
f(y+ \Phi W) &= \bigl( y+ \Phi W\bigr)^3 - \bigl( y + \Phi W\bigr) = y^3 + \bigl[ \Phi W\bigr]^3 + 3 y^2 \Phi W + 3 y \bigl[\Phi W\bigr]^2 - \bigl( y + \Phi W\bigr)\\
&= f(y) + f\bigl( \Phi W\bigr) + 3 y^2 \Phi W + 3 y \bigl[\Phi W\bigr]^2\, .
\end{align*}
Assuming sufficient regularity of $\Phi $, Lemma \ref{lemma:3.1B} 
implies the following corollary concerning the regularity of $y$.
\begin{corollary}\label{cor:add}
Let $\bfu$ be the unique weak pathwise solution to \eqref{eq:SNS}.
\begin{enumerate}
\item Assume that $\bfu_0\in L^p(\Omega,W^{2,2}(\mt))$ for some $p\geq2$ and $\Phi\in L_2(\mathfrak U;W^{2,2}(\mt))$. Then we have 
\begin{align}\label{eq:wwww}
\E\bigg[\sup_{0\leq s\leq T}\|\partial_ty(s)\|_{L^{2}_x}^p\bigg]+
\E\bigg[\sup_{0\leq s\leq T}\|y(s)\|_{W^{2,2}_x}^p\bigg]&\leq c(p,T,\Phi,\bfu_0).
\end{align}
\item
Assume that $\bfu_0\in L^2(\Omega,W^{3,2}(\mt))$ and $\Phi\in L_2(\mathfrak U;W^{3,2}(\mt))$. Then we have 
\begin{align}\label{eq:star}
\E\bigg[\sup_{0\leq s\leq T}\|\nabla\partial_t y(s)\|_{L^{2}_x}^2\bigg]\leq c(T,\Phi,u_0).
\end{align}
\end{enumerate}
\end{corollary}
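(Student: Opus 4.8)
\textbf{Proof strategy for Corollary \ref{cor:add}.} The plan is to read off both estimates from the regularity of $\bfu$ established in Lemma \ref{lemma:3.1B} together with the explicit transformation \eqref{transf1}--\eqref{eq:SNS1}. Since $y(t) = \bfu(t) - \Phi W(t)$ and $\Phi$ is spatially smooth ($\Phi \in L_2(\mathfrak U; W^{2,2}_x)$ resp.\ $W^{3,2}_x$), the spatial regularity of $y$ is inherited directly from that of $\bfu$: for part (1) one has $\|y(s)\|_{W^{2,2}_x} \leq \|\bfu(s)\|_{W^{2,2}_x} + \|\Phi W(s)\|_{W^{2,2}_x}$, and taking the supremum over $s$ and the $p$-th moment, the first term is controlled by Lemma \ref{lemma:3.1B}(c) (after checking the hypotheses $\bfu_0 \in L^p(\Omega; W^{2,p}(\mt)) \cap L^{2p}(\Omega; W^{1,p}(\mt))$ follow from $\bfu_0 \in L^p(\Omega; W^{2,2}(\mt))$ in $d \le 3$ via Sobolev embedding, possibly requiring $p$ slightly adjusted or the embedding $W^{2,2} \hookrightarrow W^{1,\infty}$ type arguments), and the second term by $\E[\sup_{s\le T}|W(s)|^p] < \infty$ for the finitely-many-components smooth noise, combined with $\|\Phi\|_{L_2(\mathfrak U; W^{2,2}_x)} < \infty$.

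For the time derivative in part (1), the key observation is that $y$ solves the \emph{random} PDE \eqref{eq:SNS1}, which contains no stochastic integral, so $\partial_t y$ exists pathwise and is given by
\begin{align*}
\partial_t y = \Delta y - \bigl[f(y) + 3 y^2 \Phi W + 3 y (\Phi W)^2\bigr] + \Delta[\Phi W] - f(\Phi W).
\end{align*}
One then estimates $\|\partial_t y(s)\|_{L^2_x}$ term by term: $\|\Delta y(s)\|_{L^2_x} \le \|y(s)\|_{W^{2,2}_x}$ is handled by the first part of \eqref{eq:wwww}; $\|f(y(s))\|_{L^2_x} = \|y^3 - y\|_{L^2_x} \lesssim \|y(s)\|_{L^6_x}^3 + \|y(s)\|_{L^2_x}$ is controlled via $W^{2,2}_x \hookrightarrow L^6_x$ (valid for $d \le 3$) and the already-established $W^{2,2}_x$-bound; the mixed terms $\|y^2 \Phi W\|_{L^2_x}$ and $\|y (\Phi W)^2\|_{L^2_x}$ are bounded using $\|\Phi W(s)\|_{L^\infty_x} \lesssim \|\Phi W(s)\|_{W^{2,2}_x}$ (since $W^{2,2} \hookrightarrow L^\infty$ for $d\le 3$) times powers of $\|y(s)\|_{L^4_x}$ resp.\ $\|y(s)\|_{L^2_x}$; the term $\|\Delta(\Phi W(s))\|_{L^2_x} \le |W(s)| \, \|\Phi\|_{L_2(\mathfrak U; W^{2,2}_x)}$ and $\|f(\Phi W(s))\|_{L^2_x}$ are controlled by moments of $\sup_s |W(s)|$. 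Raising to the $p$-th power, taking $\sup_{s \le T}$ and $\E$, and using H\"older to split the products of the $y$-norms and the $W$-norms, everything reduces to the already-proven bounds plus $\E[\sup_{s\le T}|W(s)|^{p'}] < \infty$ for suitable $p'$.

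Part (2) is entirely analogous: differentiate \eqref{eq:SNS1} once more in space, obtaining
\begin{align*}
\nabla \partial_t y = \nabla \Delta y - \nabla\bigl[f(y) + 3 y^2 \Phi W + 3 y (\Phi W)^2\bigr] + \nabla \Delta[\Phi W] - \nabla f(\Phi W),
\end{align*}
and estimate $\|\nabla \partial_t y(s)\|_{L^2_x}$ using: $\|\nabla \Delta y(s)\|_{L^2_x} \le \|y(s)\|_{W^{3,2}_x} \le \|\bfu(s)\|_{W^{3,2}_x} + |W(s)|\,\|\Phi\|_{L_2(\mathfrak U; W^{3,2}_x)}$, where the $\bfu$-part would need a $W^{3,2}_x$-moment bound — this is the one point requiring attention, since Lemma \ref{lemma:3.1B} as stated only goes up to second derivatives, so one either invokes an analogous third-order estimate (differentiating \eqref{eq:D2SNS} once more, which is routine given the pattern already displayed and the extra noise assumptions \eqref{eq:phi3}, though here $\Phi$ is additive so the correction terms simplify drastically) or notes that for additive noise the equation for $\nabla^2 \bfu$ is linear in the top-order unknown with a forcing that is already controlled. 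The nonlinear contributions $\nabla(y^3)$, $\nabla(y^2 \Phi W)$, etc., are bounded by products of $\|y\|_{L^\infty_x}$, $\|\nabla y\|_{L^6_x}$, $\|\Phi W\|_{L^\infty_x}$, $\|\nabla(\Phi W)\|_{L^\infty_x}$ and similar, all finite thanks to $W^{2,2}_x \hookrightarrow L^\infty_x$, $W^{3,2}_x \hookrightarrow W^{1,\infty}_x$ ($d \le 3$) applied to the $W^{2,2}_x$/$W^{3,2}_x$ bounds on $y$ and the smoothness of $\Phi W$.

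\textbf{Main obstacle.} The genuinely delicate point is establishing (or legitimately invoking) the $L^2(\Omega; W^{3,2}_x)$ bound for $\bfu$ itself needed in part (2): Lemma \ref{lemma:3.1B}(c) stops at second spatial derivatives, so one must either extend it by one order — differentiating \eqref{eq:D2SNS} yet again and repeating the It\^o/Burkholder--Davis--Gundy/Gronwall scheme under the stronger noise hypotheses \eqref{eq:phi3} (for additive noise this is considerably easier since $\Phi$ contributes no cubic-in-gradient correction terms) — or phrase the argument so that the top-order equation for $\nabla^2 \bfu$ in the additive case is seen to be an affine equation whose inhomogeneity lies in $L^2_t L^2_x$ with the requisite moments, yielding parabolic gain of one derivative. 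Everything else is H\"older's inequality, the Sobolev embeddings valid in dimension $d \le 3$, and the elementary fact that the finite-dimensional-range smooth additive noise $\Phi W$ has all moments of all Sobolev norms finite.
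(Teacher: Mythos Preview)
Your approach is correct and is exactly what the paper intends: the corollary is stated there without proof, merely with the remark that it follows from Lemma~\ref{lemma:3.1B} via the transformation \eqref{transf1}--\eqref{eq:SNS1}, and you have filled in precisely those details. Your identification of the one genuine gap --- that part~(2) requires a uniform-in-time $W^{3,2}_x$ bound on $\bfu$, which Lemma~\ref{lemma:3.1B} as stated does not provide --- is accurate, and your proposed fix (differentiate \eqref{eq:D2SNS} once more; for \emph{additive} noise the correction terms with $D^2_\xi\Phi$, $D^3_\xi\Phi$ vanish so the argument is strictly easier than in (c)) is the right one.

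Two small corrections. First, your claim that $\bfu_0\in L^p(\Omega;W^{2,2}_x)$ implies $\bfu_0\in L^p(\Omega;W^{2,p}_x)$ by Sobolev embedding is false for $p>2$; $W^{2,2}_x\not\hookrightarrow W^{2,p}_x$. What you actually need is $\E\bigl[\sup_s\|\nabla^2\bfu(s)\|_{L^2_x}^p\bigr]$, which is obtained not by invoking Lemma~\ref{lemma:3.1B}(c) with exponent $p$, but by redoing its proof with It\^o's formula applied to $t\mapsto \|\nabla^2\bfu(t)\|_{L^2_x}^2$ and then raising to the power $p/2$ before BDG --- the same ``moment-boosting'' manoeuvre used throughout the paper (cf.\ the inductive step~{\bf III} in the proof of Lemma~\ref{lemma:3.1}). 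Second, writing ``$|W(s)|$'' is imprecise since $W$ is cylindrical; the correct object is $\|\Phi W(s)\|_{W^{k,2}_x}$, whose moments are controlled by BDG and $\|\Phi\|_{L_2(\mathfrak U;W^{k,2}_x)}$.
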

A reformulation corresponding to \eqref{eq:SNS1} was used in \cite{BP1} to accomplish a corresponding goal, leading to an improved (local) rate of convergence for a discretisation of the 2D Navier-Stokes equation with additive noise. In comparison, the nonlinearity $f= F'$ in
(\ref{eq:SNS}) is again non-Lipschitz, but now is one-sided Lipschitz,
 which here leads to a discretisation scheme with strong rate ${\mathcal O}(\tau)$. 

 For its derivation,
we define $y^m := u^m - \Phi W(t_m)$ for iterates $(u^m)_{m=1}^M$ from (\ref{tdiscrAintro}),
%
which
satisfy the following identity,
\begin{equation}\label{numR1}
d_t y^m - \Delta y^m + f(y^m)+ 3 \vert y^m\vert^2\Phi W(t_m) + 3 y^m \vert \Phi 
W(t_m)\vert^2= \Delta [\Phi W(t_m)] - f(\Phi W(t_m))
\end{equation}
for $m \geq 1$, and $y^0 = u_0$. Here $d_t$ denotes the discrete time derivative, that is $d_t y^m=\tau^{-1}(y^m-y^{m-1})$ for $m\geq1$.
We make the following observations:
\begin{itemize}
\item[(1)] An equivalent derivation of equation (\ref{numR1}) is via an implicit Euler-based time discretisation for the random PDE (\ref{eq:SNS1}). Equation (\ref{numR1}) may be used instead of (\ref{tdiscrAintro}) as an alternative scheme to solve (\ref{eq:SNS}).
\item[(2)] In the strong error analysis for $(y^m)_{m=1}^M$ below, we prove order ${\mathcal O}(\tau)$; the proof rests on bounds for the time-derivative of the (weak variational) solution $y$ from (\ref{eq:SNS1})  in different norms, which are now available. Since $y^m - y(t_m) = u^m - u(t_m)$, this result transfers to
(\ref{tdiscrAintro}); see also Remark \ref{others1}, item {\bf 4.}
\end{itemize}
The following perturbation analysis evidences the role that the involved nonlinearity $f = F'$ will play in the strong error analysis in Subsection \ref{suse2}.

\subsection{A perturbation analysis for (\ref{numR1})}\label{suse1}
 We start with a perturbation analysis, where we refer to $(y^m_i)_{m=1}^M$ as the solution of (\ref{numR1}), with $y^0_i =y_{0,i}$, for $i \in \{1,2\}$. Subtracting both identities then leads to ($e^m := y^m_2-y^m_1$)
\begin{eqnarray*}
d_t e^m - \Delta e^m + \Bigl[ f(y^m_2) - f(y^m_1)\Bigr]+ 
3 \Bigl[ \vert y^m_2\vert^2 - \vert y^m_1\vert^2\Bigr] \Phi W(t_m)+ 3 e^m \vert \Phi W(t_m)\vert^2 = 0\, ,
\end{eqnarray*}
where $d_t e^m := \frac{1}{\tau} (e^m - e^{m-1})$. We write
\begin{equation}\label{erro1} d_te_m-\Delta e_m+{\tt I}^m+{\tt II}^m+{\tt III}^m=0
\end{equation}
 with an obvious meaning of the terms ${\tt I}^m$, ${\tt II}^m$ and ${\tt III}^m$.
 When written in weak form, we test with $e^m$, and treat resulting terms for ${\tt I}^m$ and ${\tt II}^m$ independently. For the first term ${\tt I}^m$ we use the following identity  which is based on binomial formula ($a,b \in {\mathbb R}$), 
 \begin{eqnarray*}
   b^3 -  a^3  &=&
 \frac{1}{2} \bigl( b^3 - b^2 a + b^2 a - a^3 \bigr) + 
 \frac{1}{2} \bigl( b^3 - b a^2 + b a^2 - a^3 \bigr) \\
 &=&\frac{1}{2} \bigl( b^2 (b-a)+ a ( b^2 - a^2)\bigr) 
 + \frac{1}{2} \bigl( a^2 (b-a)+ b ( b^2 - a^2)\bigr) \\
 &=& \frac{1}{2} \bigl( a^2 + b^2\bigr) (b-a) + \frac{1}{2} \bigl( a  + b \bigr)^2 (b-a)\, .
 \end{eqnarray*}
 With its help, we obtain that
\begin{align}\nonumber
\bigl\langle{\tt I}^m, e^m\bigr\rangle_{L^2_x}  + \Vert e^m\Vert^2_{L^2_x} &= \Bigl\langle (y^m_1)^3 - (y^m_2)^3,e^m\Bigr\rangle_{L^2_x}  \\ \label{helpce}
&= \Bigl\langle \frac{1}{2}\bigl\vert y^m_1 + y^m_2\vert^2 +\frac{1}{2}\bigl[(y^m_1)^2 + (y^m_2)^2\bigr]  e^m,e^m\Bigr\rangle_{L^2_x} \,.
\end{align}
And, by binomial formula, and (\ref{helpce}),
\begin{align*}
\big|\langle{\tt II}^m, e^m\rangle_{L^2_x}\big| &=3\Big|\Bigl\langle e^m\bigl[y^m_1+y^m_2],e^m \Phi W(t_m)\Bigr\rangle_{L^2_x}\Big| \\&\leq 3 \Vert e^m \Phi W(t_m)\Vert^2_{L^2_x}  +
\frac{3}{4} \Vert e^m[y^m_1+y^m_2]\Vert^2_{L^2_x}  \\
&\leq  3 \Vert e^m \Phi W(t_m)\Vert^2_{L^2_x}  +
\frac{1}{2}\Vert e^m[y^m_1+y^m_2]\Vert^2_{L^2_x}  + \frac{1}{2} \Bigl( \Vert e^m y^m_1\Vert^2_{L^2_x}  + \Vert e^m y^m_2\Vert^2_{L^2_x} \Bigr)\\
&=\bigl\langle{\tt III}^m, e^m\bigr\rangle_{L^2_x} +\bigl\langle{\tt I}^m, e^m\bigr\rangle_{L^2_x} + \Vert e^m\Vert^2_{L^2_x}\,, 
\end{align*}
where we used that
\begin{align*}
\frac{1}{4} \Vert e^m[y^m_1+y^m_2]\Vert^2_{L^2_x} 
&\leq \frac{1}{2} \Bigl( \Vert e^m y^m_1\Vert^2_{L^2_x}  + \Vert e^m y^m_2\Vert^2_{L^2_x} \Bigr)
\end{align*}
by Young's inequality.
By now using these calculations for the error identity, as well as the index $m$, 
and taking into account that
\begin{align*}
\frac{1}{2}d_t\|e^m\|^2_{L^2_x}+\frac{\tau}{2}\|d_te^m\|_{L^2_x}^2=\frac{1}{\tau}\big\langle e^m-e^{m-1},e^m\big\rangle_{L^2_x}
\end{align*}
we find
\begin{equation}\label{pert1}
\frac{1}{2} d_t \Vert e^m\Vert^2_{L^2_x} + \frac{\tau}{2} \Vert d_t e^m\Vert^2_{L_x^2} + 
\Vert \nabla e^m\Vert^2_{L_x^2} \leq \Vert e^m\Vert^2_{L^2_x}\,,
\end{equation}
and the implicit version of Gronwall's lemma settles the estimate
\begin{equation}\label{pert1a}
\frac{1}{2} \max_{1 \leq m \leq M} {\mathbb E}\bigl[ \Vert e^m\Vert^2_{L^2_x}\bigr]
+ {\mathbb E}\Bigl[ \sum_{m=1}^M \tau \Vert \nabla e^m\Vert^2_{L^2_x}\Bigr] \leq C {\mathbb E}\bigl[\Vert y_{0,1} - y_{0,2}\Vert^2_{L^2_x} \bigr]\, .
\end{equation}

\subsection{Error analysis for (\ref{numR1})}\label{suse2} We integrate in (\ref{eq:SNS1}) over $[t_{m-1},t_m]$ to get
\begin{align}\nonumber
&y(t_m) - y({t_{m-1}}) + \int_{t_{m-1}}^{t_m} \big(-\Delta y  + f\bigl( y \bigr) +
3y^2 \Phi W + 3y [\Phi W]^2\big)\, {\rm d}s  \\ \label{weint1}
&\qquad = \int_{t_{m-1}}^{t_m}\big( \Delta [\Phi W] - f(\Phi W)\big)\, {\rm d}s\, 
\end{align}
and define
\begin{align*}
{\tt IV}^m(s)&:=-\Delta y(s)  + f\bigl( y(s) \bigr) +
3y^2(s) \Phi W(s) + 3y [\Phi W(s)]^2,\quad s\in[t_{m-1},t_m],\\
{\tt V}^{m}&:=\int_{t_{m-1}}^{t_m} \big(\Delta [\Phi W] - f(\Phi W)\big)\, {\rm d}s.
\end{align*}
In order to apply the parts from Subsection \ref{suse1}, we write  $${\tt IV}^m(s) = -\Delta y(t_m) + f\bigl(y(t_m)\bigr) +
3 \bigl( y(t_m)\bigr)^2 \Phi W(t_m) + 3 y(t_m) [\Phi W(t_m)]^2 + {\tt rest}_{{\tt IV}^m}(s)$$
for $s\in[t_{m-1},t_m]$,
where
\begin{align}\nonumber
{\tt rest}_{{\tt IV}^m}(s) &= \int_{s}^{t_m} \partial_t \Bigl( \Delta  y(\xi) - f\bigl( y(\xi)\bigr) \Bigr)\, {\rm d}\xi + 3\Bigl( \bigl(y(s) \bigr)^2 \Phi W(s) - \bigl(y(t_m) \bigr)^2 \Phi W(t_m)\Bigr) \\ \label{pert2}
&\quad 
 +3 \Bigl(y(s) [\Phi W(s)]^2 - y(t_m) [\Phi W(t_m)]^2\Bigr) =:\sum_{i=1}^3  {\tt rest}^{(i)}_{{\tt IV}^m}(s)\,.
\end{align}
For the error $E^m := y(t_m) - y^m$, on using ${\tt IV}^m(t_{m-1})$, we may now easily deduce an equation
of the form (\ref{erro1}), with terms similar to ${\tt I}^m, {\tt II}^m$, and ${\tt III}^m$,
and a non-vanishing right-hand side $\int_{t_{m-1}}^{t_m}  {\tt rest}_{{\tt IV}^m}(s)\, {\rm d}s$ instead. When tested with $E^m$, the arguments to estimate
the first three terms may then be estimated as in Subsection \ref{suse1}, leading to inequality (\ref{pert1}), such that we finally arrive at ${\mathbb P}$-a.s.:
\begin{eqnarray}\nonumber
&&\frac{1}{2} d_t \Vert E^m\Vert^2_{L^2_x} + \frac{\tau}{2} \Vert d_t E^m\Vert^2_{L_x^2} + 
\Vert \nabla E^m\Vert^2_{L_x^2} \\ \label{pert3}
&&\quad \leq 2\Vert E^m\Vert^2_{L^2_x} + 
\Bigl \vert \int_{t_{m-1}}^{t_m} \bigl\langle{\tt rest}_{{\tt IV}^m}(s),  E^m \bigr\rangle_{L^2_x}\, {\rm d}s
\Bigr\vert +   \frac{1}{2}\Vert {\tt V}^m\Vert^2_{L^2_x} \, .
\end{eqnarray}
Note that $ {\mathbb E}[\Vert {\tt V}^m\Vert^2_{L^2_x}] \leq \tau^2 t_m^3$; for the remaining term on the right-hand side, we treat the three terms $\{ {\tt rest}^{(i)}_{{\tt IV}^m}\}_{i=1}^3$ separately:
\begin{align}\label{pert4}
\begin{aligned} 
\quad {\mathbb E}\Bigl[ \Bigl\langle\int_s^{t_m}  \nabla \partial_t y(\xi)\, {\rm d}\xi, \nabla E^m\Bigr\rangle_{L^2_x}\Bigr] &\leq \frac{1}{2} {\mathbb E}\Bigl[\Bigl\|\int_s^{t_m}  \nabla \partial_t y(\xi)\, {\rm d}\xi\Bigr\|_{L^2_x}^2\Bigr]+ \frac{1}{2} {\mathbb E}\bigl[ \Vert \nabla E^m\Vert^2_{L^2_x}\bigr]\, \\
&\leq \frac{\tau}{2} {\mathbb E}\Bigl[  \int_{t_{m-1}}^{t_m} \Vert \nabla \partial_t y(\xi)\Vert^2_{L^2_x}\, {\rm d}\xi\Bigr]+ \frac{1}{2} {\mathbb E}\bigl[ \Vert \nabla E^m\Vert^2_{L^2_x}\bigr]\,,
\end{aligned}
 \end{align}
where the first term is bounded by $C \tau^2$, thanks to \eqref{eq:star}, and 
Corollary \ref{cor:add}. By Sobolev's embedding, we estimate for $\kappa>0$ arbitrary
\begin{align}
\nonumber
3 {\mathbb E}\Bigl[ &\Bigl\langle\int_{s}^{t_m} y^2(\xi) \partial_t y(\xi)\, {\rm d}\xi, E^m\Bigr\rangle_{L^2_x}\Bigr] \\\nonumber&\leq C\tau \, {\mathbb E}\Bigl[ \int_s^{t_m} \Vert y^2(\xi)\Vert_{L^6_x}^2 
\Vert \partial_t y(\xi)\Vert_{L^2_x}^2\, {\rm d}\xi\Bigr] + {\mathbb E}\bigl[ \Vert E^m\Vert^2_{L^3_x}\bigr]\, \\\nonumber
&\leq C\tau \, {\mathbb E}\Bigl[ \int_s^{t_m} {\tiny \Vert  y(\xi)\Vert^4_{W^{2,2}_x} \Vert \partial_t y(\xi)\Vert^2_{L^2_x}}\,{\rm d}\xi\Bigr] + {\mathbb E}\bigl[ \Vert E^m\Vert_{L^2_x}\Vert \nabla E^m\Vert_{L^2_x}\bigr]\\\nonumber
&\leq C\tau^2 \, {\mathbb E}\Bigl[\sup_{t_{m-1}\leq \xi\leq t_m}\Vert  y(\xi)\Vert^6_{W^{2,2}_x} +\sup_{t_{m-1}\leq \xi\leq t_m}\Vert \partial_t y(\xi)\Vert^6_{L^2_x}\Bigr]\\&\quad + C(\kappa){\mathbb E}\bigl[ \Vert E^m\Vert_{L^2_x}^2\bigr]+\kappa  {\mathbb E}\bigl[ \Vert \nabla E^m\Vert^2_{L^2_x}\bigr],
\label{pert5} 
\end{align}
where we also used interpolation of $L^3_x$ between $L^2_x$ and $W^{1,2}_x$. Now we can absorb the last term into the left-hand side choosing $\kappa$ sufficiently small. By (\ref{weint1}) and 
Corollary \ref{cor:add} we can estimate the first term by $C \tau^2$. The missing term $ {\mathbb E}\bigl[ \bigl\langle\int_{s}^{t_m}  \partial_t y(\xi)\, {\rm d}\xi, E^m\bigr\rangle_{L^2_x}\bigr]$ in  ${\tt rest}^{(1)}_{{\tt IV}^m}(s)$ can be estimated via an analogous (even easier) chain of inequalities.
We resume with 
\begin{align*}
{\tt rest}^{(3)}_{{\tt IV}^m}(s) &=  \bigl(y(s)  \bigr)^2 \Phi \bigl[W(s)-W(t_m)\bigr] +  \bigl[  (y(s) )^2 -  (y(t_m) )^2 \bigr]\Phi W(t_m) \\
&=  \bigl(y(s)  \bigr)^2 \Phi \bigl[W(s)-W(t_m)\bigr] +   [y(s) - y(t_m)][y(s) + y(t_m)] \Phi W(t_m) \\
&=: {\tt rest}^{(3,a)}_{{\tt IV}^m}(s) + {\tt rest}^{(3,b)}_{{\tt IV}^m}(s)\,,
\end{align*} 
and use ${\mathbb E}[\Vert \Phi (W(s) - W(t_m))\Vert^2_{L^\infty_x}\bigl\vert {\mathcal F}_s] \leq C\tau$, which is a consequence of assumption \eqref{eq:phi}, to further estimate
\begin{align*}
{\mathbb E}\Bigl[\bigl\langle {\tt rest}^{(3,a)}_{{\tt IV}^m}(s), E^m\bigr\rangle_{L^2_x}\Bigr] &=
{\mathbb E}\Bigl[\bigl\langle {\tt rest}^{(3,a)}_{{\tt IV}^m}(s), \tau d_t E^m\bigr\rangle_{L^2_x}\Bigr] \\
&\leq \frac{\tau}{4} {\mathbb E}\bigl[ \Vert d_t E^m\Vert^2_{L^2_x}\bigr]
+ \tau {\mathbb E}\bigl[ \Vert {\tt rest}^{(3,a)}_{{\tt IV}^m}(s)\Vert^2_{L^2_x}\bigr]\\
&\leq \frac{\tau}{4} {\mathbb E}\bigl[ \Vert d_t E^m\Vert^2_{L^2_x}\bigr]
+ C\tau^2  {\mathbb E}[\Vert y(s)\Vert^4_{L^4_x}]\,.
\end{align*}
Also, since $\big({\mathbb E}[\Vert y(s) - y(t_m)\Vert_{L^2_x}^2]^3\big)^{1/3} \leq C\tau^2$ which easily follows from (\ref{weint1}) and 
Corollary \ref{cor:add},
\begin{align*}
{\mathbb E}\Bigl[\bigl\langle {\tt rest}^{(3,b)}_{{\tt IV}^m}(s), E^m\bigr\rangle_{L^2_x}\Bigr] &\leq
{\mathbb E}\bigl[ \Vert E^m\Vert^2_{L^2_x}\bigr] + C\tau^2  \bigg({\mathbb E}\biggl[\sup_{0\leq t\leq T} \Vert y\Vert_{L^\infty_x}^6\biggr]\bigg)^{\frac{1}{3}}\bigg({\mathbb E}\biggl[\sup_{0\leq t\leq T} \Vert \Phi W\Vert_{L^\infty_x}^6\biggr]\bigg)^{\frac{1}{3}}\\
 &\leq
{\mathbb E}\bigl[ \Vert E^m\Vert^2_{L^2_x}\bigr] + C\tau^2  \bigg({\mathbb E}\biggl[\sup_{0\leq t\leq T} \Vert y\Vert_{W^{2,2}_x}^6\biggr]\bigg)^{\frac{1}{3}}\\
 &\leq
{\mathbb E}\bigl[ \Vert E^m\Vert^2_{L^2_x}\bigr] + C\tau^2.
\end{align*}
The estimate for ${\tt rest}^{(2)}_{{\tt IV}^m}(s)$ is analogous to that for ${\tt rest}^{(3)}_{{\tt IV}^m}(s)$ such that we conclude
\begin{eqnarray*}
&&{\mathbb E}\bigl[ d_t \Vert E^m\Vert^2_{L^2_x} + \tau \Vert d_t E^m\Vert^2_{L_x^2} + 
\Vert \nabla E^m\Vert^2_{L_x^2}\bigr] \\ \label{pert3}
&&\quad \leq C{\mathbb E}\bigl[\Vert E^m\Vert^2_{L^2_x}\bigr] + 
C\tau^2 +   C{\mathbb E}\bigl[\Vert {\tt V}^m\Vert^2_{L^2_x}\bigr] \, .
\end{eqnarray*}
Estimating the final term by $C\tau^2$ and applying the discrete Gronwall lemma,
the above considerations lead to the following theorem.
\begin{theorem}\label{thm:3.1tilde}
 Let $(\Omega,\mf,(\mf_t)_{t\geq0},\prst)$ be a given stochastic basis with a complete right-con\-ti\-nuous filtration and an $(\mf_t)$-cylindrical Wiener process $W$. Let $T>0$ be fixed.
Assume that $\bfu_0\in L^p(\Omega,W^{2,2}(\mt))\cap L^2(\Omega,W^{3,2}(\mt))$ for some $p\geq8$ and $\Phi\in L_2(\mathfrak U;W^{3,2}(\mt))$. Let $u$ be the solution
to \eqref{eq:SNS}, and $(u^m)_{m=1}^M$ be the solution to (\ref{tdiscrAintro}).
Then we have the error estimate
\begin{align}\label{eq:thm:4}
\begin{aligned}
\max_{1\leq m\leq M}\E\bigg[\|u({{t}}_m)-u^{m}\|^2_{L^2_x}&+\sum_{n=1}^m \tau \|\nabla u({t}_n)-\nabla u^{n}\|^2_{L^2_x}\bigg]\leq \,C\,\tau^{2}.
\end{aligned}
\end{align}
\end{theorem}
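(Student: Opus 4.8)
The plan is to reduce the error estimate \eqref{eq:thm:4} for scheme \eqref{tdiscrAintro} to a strong error analysis for the transformed scheme \eqref{numR1}, exploiting the exact identity $y^m - y(t_m) = u^m - u(t_m)$ noted above. Indeed, since the difference $u^m-u(t_m)$ equals $E^m := y(t_m)-y^m$ pathwise, and likewise for the spatial gradients, it suffices to prove
\begin{align*}
\max_{1\leq m\leq M}\E\bigl[\|E^m\|^2_{L^2_x}\bigr]+\E\Bigl[\sum_{n=1}^m\tau\|\nabla E^n\|^2_{L^2_x}\Bigr]\leq C\tau^2,
\end{align*}
which is exactly what the chain of estimates assembled in Subsection \ref{suse2} produces. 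So the proof of the theorem is, in essence, a careful assembly of the preceding computations.

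The key steps, in order, are as follows. First, I would record the reformulation: $y$ solves the random PDE \eqref{eq:SNS1}, which after integration over $[t_{m-1},t_m]$ yields \eqref{weint1}; subtracting the discrete identity \eqref{numR1} (equivalently, the implicit Euler step applied to \eqref{eq:SNS1}, as noted in observation (1)) gives an equation of the form \eqref{erro1} for $E^m$ with right-hand side $\int_{t_{m-1}}^{t_m}{\tt rest}_{{\tt IV}^m}(s)\,{\rm d}s - {\tt V}^m$. Second, test this identity with $E^m$ and handle the three nonlinear/multiplicative terms ${\tt I}^m$, ${\tt II}^m$, ${\tt III}^m$ exactly as in the perturbation analysis of Subsection \ref{suse1}: the binomial-formula identity for $b^3-a^3$ together with Young's inequality shows these combine to absorb everything except a harmless $2\|E^m\|^2_{L^2_x}$ term, giving \eqref{pert3}. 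Third, bound the three pieces of ${\tt rest}_{{\tt IV}^m}$ and the term ${\tt V}^m$: for ${\tt rest}^{(1)}$ one uses the time-derivative regularity $\E[\sup_s\|\nabla\partial_t y(s)\|^2_{L^2_x}]<\infty$ and $\E[\sup_s\|\partial_t y(s)\|^p_{L^2_x}]<\infty$ from Corollary \ref{cor:add} (this is where the hypotheses $\bfu_0\in L^p(\Omega,W^{2,2})\cap L^2(\Omega,W^{3,2})$, $\Phi\in L_2(\mathfrak U;W^{3,2})$ and $p\geq 8$ enter, so that the sixth-power moments appearing in \eqref{pert5} are finite), together with the Sobolev embedding $W^{2,2}_x\hookrightarrow L^\infty_x$ valid for $d\leq 3$; for ${\tt rest}^{(2)}$ and ${\tt rest}^{(3)}$ one uses the increment bound $(\E[\|y(s)-y(t_m)\|^2_{L^2_x}]^3)^{1/3}\leq C\tau^2$ (from \eqref{weint1} and Corollary \ref{cor:add}), the noise increment estimate $\E[\|\Phi(W(s)-W(t_m))\|^2_{L^\infty_x}\mid\mathcal F_s]\leq C\tau$ following from \eqref{eq:phi} and spatial smoothness of $\Phi$, and the trick of writing $\langle {\tt rest}^{(3,a)},E^m\rangle_{L^2_x}=\langle{\tt rest}^{(3,a)},\tau d_tE^m\rangle_{L^2_x}$ to buy a factor $\tau$ and absorb into the $\frac{\tau}{2}\|d_tE^m\|^2_{L^2_x}$ term; finally $\E[\|{\tt V}^m\|^2_{L^2_x}]\leq\tau^2 t_m^3$ directly. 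Fourth, choose the free parameter $\kappa$ small enough to absorb the $\kappa\|\nabla E^m\|^2_{L^2_x}$ terms into the left-hand side, take expectations, sum over $m$, and apply the discrete (implicit) Gronwall lemma starting from $E^0=0$.

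The main obstacle — and the conceptual heart of the argument — is the treatment of the cubic nonlinearity in testing \eqref{erro1} with $E^m$: a priori the term $\langle f(y(t_m))-f(y^m),E^m\rangle_{L^2_x}$ together with the cross terms $3[\,|y(t_m)|^2-|y^m|^2\,]\Phi W(t_m)$ is not obviously sign-definite, and the naive estimate loses a power of $\tau$. The resolution, carried out in Subsection \ref{suse1}, is the algebraic identity $b^3-a^3=\tfrac12(a^2+b^2)(b-a)+\tfrac12(a+b)^2(b-a)$, which makes the one-sided Lipschitz structure of $f$ explicit and, crucially, produces precisely the quantity $\tfrac14\|E^m[y(t_m)+y^m]\|^2_{L^2_x}$ needed to dominate the multiplicative cross term ${\tt II}^m$ via Young's inequality. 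Once this cancellation is in place, everything else is a matter of bookkeeping with the regularity bounds of Corollary \ref{cor:add} and the elementary increment estimates; no discrete stability of the iterates $(u^m)$ is needed, in contrast to the multiplicative-noise case. A secondary technical point is to make sure all the higher moments invoked in \eqref{pert5} and in the estimate of ${\tt rest}^{(3,b)}$ (sixth powers of $\|y\|_{W^{2,2}_x}$, $\|\partial_t y\|_{L^2_x}$ and $\|\Phi W\|_{L^\infty_x}$) are genuinely finite under the stated hypotheses — which is exactly why $p\geq 8$ is assumed rather than merely $p\geq 2$.
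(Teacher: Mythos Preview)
Your proposal is correct and follows essentially the same approach as the paper: the transformation to $y$, the algebraic identity $b^3-a^3=\tfrac12(a^2+b^2)(b-a)+\tfrac12(a+b)^2(b-a)$ from Subsection~\ref{suse1}, the decomposition of ${\tt rest}_{{\tt IV}^m}$ into three pieces, the martingale trick $\langle{\tt rest}^{(3,a)},E^m\rangle=\langle{\tt rest}^{(3,a)},\tau d_tE^m\rangle$, and the final discrete Gronwall step are exactly what the paper does. Your identification of where the moment hypothesis $p\geq 8$ and the regularity from Corollary~\ref{cor:add} enter is also accurate.
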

By its proof, this result also applies to $(y^m)_{m=1}^M$ from \eqref{numR1}, and
 $y$ from \eqref{eq:SNS1}.
\begin{remark}\label{others1}
{\bf 1.}~Problem (\ref{eq:SNS}) usually involves a small-scale parameter $\varepsilon>0$ to address the width of diffuse interfaces of
adjacent material phases, whose resolution in terms of numerical scalings is crucial 
for accurate simulation; see \cite{AnBaNuPr}. In this work, we choose $\varepsilon = 1$ to address non-Lipschitzness of $f$ only, but expect a corresponding analysis as in \cite{AnBaNuPr} to go through for $\varepsilon \ll 1$ --- avoiding a factor $\exp(\frac{T}{\varepsilon})$ that otherwise would occur in a straight-forward application of Gronwall's lemma.

{\bf 2.}~The proof of a strong rate ${\mathcal O}(\sqrt{\tau})$ for an implicit time discretisation --- which slightly differs from (\ref{ac-2d}) --- in the case of (\ref{eq:SNS}) with multiplicative noise in \cite{MP1} exploits its character as a structure preserving discretisation,
and therefore inherits the gradient structure of the problem, and related energy estimates. The crucial step in the error analysis then uses the weak monotonicity
property of the cubic nonlinearity (see \cite[(4.11)]{MP1}) to {\em avoid a truncation argument} for the nonlinearity in the sense of \cite{Pr1} for the stochastic Navier-Stokes equation; see also \cite{BP1}.

{\bf 3.}~The construction and analysis of numerical schemes in \cite{MP1,AnBaNuPr} is based on the {\em strong variational solution} concept for (\ref{eq:SNS}) --- which is in contrast to other numerical works in the literature where the linear semi-group ${\mathscr S} := \{ {\mathscr S}(t);\, t \geq 0\}$, for ${\mathscr S}(t) = e^{t\Delta}$ is used as building block to set up the
{\em mild solution} concept for (\ref{eq:SNS}) {\em with additive noise} for $d=1$ and $\varepsilon =1$,
\begin{equation}\label{mildsol1}
u(t) = {\mathscr S}(t) u_0 + \int_0^t {\mathscr S}(t-s) f\bigl( u(s)\bigr)\, {\rm d}s +
\int_0^t {\mathscr S}(t-s) \, {\rm d}W(s) \qquad\forall\, t \geq 0\, .
\end{equation}
In this setting, the
authors of \cite{BCH1,BG1} prove strong rates of convergence for a {\em splitting scheme}, even addressing space-time white noise. Splitting schemes have a long tradition in evolutionary problems to avoid solving nonlinear PDEs for iterates, but cause structure violation of (\ref{eq:SNS}); in fact, the underlying stability bounds in \cite[Prop.~3]{BG1} and \cite[Lemma 3.1]{BCH1} are {\em not} in the natural {\em energy norm} of the underlying problem (\ref{eq:SNS}) with gradient structure, and also require a {\em Gronwall-type estimation} --- which reflects usually needed smaller mesh sizes in `splitting-scheme based' simulations at intermediate or large times $T \gg 1$ to keep accuracy. In comparison, the {\color{red} (energy)} structure-preserving schemes (\ref{ac-2d}) and (\ref{numR1}) are nonlinear, but the larger 
computational effort caused by Newton-type fast nonlinear solvers here usually goes along with admissible larger mesh sizes to attain the same accuracy.

{\bf 4.}~In \cite{BCH1},  the strong  rate ${\mathcal O}(\tau)$ is shown
for the scheme \cite[(1.2)]{BCH1} in \cite[Thm.~4.6]{BCH1}. A crucial step in their proof is also to employ a transformation, see \cite[(2.4) and (2.7)]{BCH1}, which is similar conceptionally to (\ref{transf1})  --- however in \cite{BCH1} on the level of mild solutions, using (\ref{mildsol1}); in fact, the (corresponding) identity 
\begin{equation}\label{mildsol3} u(t) = Y(t) + \widetilde{W}^{\Delta}(t)
\end{equation} is used instead of SPDE (\ref{mildsol1}). Eventually,  only the {\em random} PDE
\begin{equation}\label{mildsol2}
Y(t) = {\mathscr S}(t) u_0 + \int_0^t {\mathscr S}(t-s) f \bigl( Y(t) + \widetilde{W}^{\Delta}(t)\bigr)\, {\rm d}s \qquad \forall\, t \geq 0
\end{equation}
is studied in the main part of the analysis in \cite{BCH1}. Here the last term is  the `stochastic convolution process
  $\widetilde{W}^{\Delta} := \{\widetilde{W}^{\Delta}(t);\, t \geq 0\}$'. 
 From a practical point of view, however, the authors in \cite{BCH1} clearly mention the  restricted applicability of their scheme,  which requires the {\em explicit} knowledge of ${\mathscr S}$, and (an approximation) of $\widetilde{W}^{\Delta}$: as such, its efficient use requires to know its spectrum --- which restricts its {\em practical application to prototypic} domains ${\mathcal O} \subset {\mathbb R}^d$ where eigenvalues of $\Delta$ are explicitly known.
On the other hand, the approach in \cite{BCH1} 
allows less regular noise compared to here.

Corresponding restrictions hold for schemes that are studied in \cite{LQ1, BJ, BGJK,CHS}, where again the construction rests on (\ref{mildsol2}) and (\ref{mildsol3}), and involves (the approximation of) $\widetilde{W}^{\Delta}$.

{\bf 5.}~To our knowledge, optimal strong convergence ${\mathcal O}(\tau)$ for 
a time discretization to solve (\ref{eq:SNS}) with additive trace-class noise was first established in \cite{QW1}\footnote{The focus in \cite{QW1} is, however, on a finite-element based space-time discretisation.}: here, again, the authors start with a {\rm mild solution} for (\ref{eq:SNS}), and base their error analysis of  (\ref{ac-2d}) on its reformulation \cite[(4.2)]{QW1}, and the 
known (smoothing) properties of  the semigroup ${\mathscr S}_t = e^{t \Delta}$.
The error analysis provided in this section rather exploits variational arguments: it bases on reformulation (\ref{numR1}), as well as simple, explicit calculations for the specific $f$ in Subsection \ref{suse1}, and may easily be generalized
to weakly elliptic ({\em e.g.}~non-selfadjoint) operators which do not generate a semi-group; see {\em e.g.}~\cite[Section 2.5]{DuFe} for the {\em convected or degenerate Allen-Cahn equation} as prominent examples in multiphase fluid flow models, or the {\em anisotropic} Allen-Cahn equation in \cite[Section 8]{DeDzEl}.
\end{remark}

\section{Preparations for the weak error analysis}\label{sec:prep}

\subsection{The nonlinear semigroup}
\label{sec:Ttau}
In this section we study properties of the discrete nonlinear semigroup
$\mathcal T_\tau$ on $L^2(\mt)$, which is the solution operator to the equation
\begin{align}\label{eq:T}
\bfv+\tau\mathcal A\bfv+\tau f(\bfv)=\bfg,
\end{align}
where $\bfg\in L^2(\mt)$ is given.
We start with some stability estimates. We can write \eqref{eq:T} equivalently as
\begin{align}\label{eq:T'}
\bfv+\tau D\mathcal E(v)=\bfg.
\end{align}
Due to the choice of the continuous interpolation $u_\tau$ in \eqref{eq:utau} below,
the weak error analysis in Section \ref{sec:error}
heavily depends on stability estimates for $\mathcal T_\tau$, which we derive in the following lemma. Eventually, we estimate the distance of $\mathcal T_\tau$ to the identity and consider its Fr\'echet-derivatives. They are used for the same purpose due to the representation for $u_\tau$ in \eqref{tdiscr'}.  
\begin{lemma}\label{lem:T}
Suppose that $\tau<\frac{1}{2}$ and $p\geq 2$. Then we have
\begin{align}\label{est:TL2}
\|\mathcal T_{\tau}\bfg\|_{L^p_x}^p+\tau\|\nabla|\mathcal T_{\tau}\bfg|^{p/2}\|^2_{L^2_x}&\leq\,c\|\bfg\|_{L^p_x}^p,\\
\label{est:TW12}
\|\nabla\mathcal T_{\tau}\bfg\|^p_{L^p_x}+\tau\|\nabla|\nabla\mathcal T_{\tau}\bfg|^{p/2}\|^2_{L^2_x}&\leq\,c\|\bfg\|_{W^{1,p}_x}^p,\\
\label{est:TW22}
\|\nabla^2\mathcal T_{\tau}\bfg\|^p_{L^p_x}+\tau\|\nabla|\nabla^2\mathcal T_{\tau}\bfg|^{p/2}\|_{L^2_x}^2&\leq\,c\big(\|\bfg\|_{W^{2,p}_x}^p+\|\bfg\|_{W^{1,3p}_x}^{3p}\big),
\end{align}
for all functions $\bfg$
for which the quantities on the right-hand side are finite.
Here $c=c(p)>0$ is independent of $\tau$ and $\bfg$.
\end{lemma}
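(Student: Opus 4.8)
The plan is to prove the three estimates by testing \eqref{eq:T} with suitable powers of $\bfv:=\mathcal T_\tau\bfg$, mirroring the a priori bounds for the SPDE obtained in Lemma \ref{lemma:3.1B}, but now for the single stationary step. Throughout I write $\bfv=\mathcal T_\tau\bfg$, so that $\bfv+\tau\mathcal A\bfv+\tau f(\bfv)=\bfg$.

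\textbf{Step 1 (the $L^p$-bound \eqref{est:TL2}).} I test \eqref{eq:T} with $|\bfv|^{p-2}\bfv$. The term $\langle\bfv,|\bfv|^{p-2}\bfv\rangle=\|\bfv\|_{L^p_x}^p$ gives the good quantity. The diffusion term yields $\tau\langle\mathcal A\bfv,|\bfv|^{p-2}\bfv\rangle=\tau(p-1)\int_{\mt}|\bfv|^{p-2}|\nabla\bfv|^2\dx=c\tau\|\nabla|\bfv|^{p/2}\|_{L^2_x}^2$, which is the second good term. For the nonlinearity I use $\int_{\mt}|\bfv|^{p-2}f(\bfv)\bfv\dx=\int_{\mt}|\bfv|^{p-2}(|\bfv|^4-|\bfv|^2)\dx\geq -\int_{\mt}|\bfv|^p\dx$ (the quartic part is nonnegative; only the $-\bfv$ part needs absorbing), so this term is bounded below by $-\tau\|\bfv\|_{L^p_x}^p$. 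On the right-hand side, $\langle\bfg,|\bfv|^{p-2}\bfv\rangle\le\|\bfg\|_{L^p_x}\|\bfv\|_{L^p_x}^{p-1}\le\frac12\|\bfv\|_{L^p_x}^p+c\|\bfg\|_{L^p_x}^p$ by Young. Collecting terms gives $(\tfrac12-\tau)\|\bfv\|_{L^p_x}^p+c\tau\|\nabla|\bfv|^{p/2}\|_{L^2_x}^2\le c\|\bfg\|_{L^p_x}^p$, and since $\tau<\tfrac12$ the prefactor is positive; rescaling constants yields \eqref{est:TL2}.

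\textbf{Step 2 (the $W^{1,p}$-bound \eqref{est:TW12}).} I differentiate \eqref{eq:T} in the direction $\gamma\in\{1,\dots,d\}$ to get $\partial_\gamma\bfv+\tau\mathcal A\partial_\gamma\bfv+\tau f'(\bfv)\partial_\gamma\bfv=\partial_\gamma\bfg$, then test with $|\partial_\gamma\bfv|^{p-2}\partial_\gamma\bfv$ and sum over $\gamma$. The zeroth-order and Laplacian terms produce $\|\nabla\bfv\|_{L^p_x}^p$ and $c\tau\|\nabla|\partial_\gamma\bfv|^{p/2}\|_{L^2_x}^2$ exactly as before. For the nonlinear term, $\tau\int|\partial_\gamma\bfv|^{p-2}f'(\bfv)|\partial_\gamma\bfv|^2\dx=\tau\int|\partial_\gamma\bfv|^{p}(3\bfv^2-1)\dx\geq -\tau\int|\partial_\gamma\bfv|^p\dx$ since $3\bfv^2\geq0$ — this is precisely where one-sided Lipschitzness of $f$ (equivalently $f'\geq-1$) is used. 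The right-hand side is handled by Young as in Step 1, and $\tau<\tfrac12$ closes the estimate. This is the cleanest of the three.

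\textbf{Step 3 (the $W^{2,p}$-bound \eqref{est:TW22}) — the main obstacle.} Here I differentiate \eqref{eq:T} twice: $\partial_{\gamma_1}\partial_{\gamma_2}\bfv+\tau\mathcal A\partial_{\gamma_1}\partial_{\gamma_2}\bfv+\tau f'(\bfv)\partial_{\gamma_1}\partial_{\gamma_2}\bfv+\tau f''(\bfv)\partial_{\gamma_1}\bfv\,\partial_{\gamma_2}\bfv=\partial_{\gamma_1}\partial_{\gamma_2}\bfg$, and test with $|\partial_{\gamma_1}\partial_{\gamma_2}\bfv|^{p-2}\partial_{\gamma_1}\partial_{\gamma_2}\bfv$. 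The terms involving $\partial_{\gamma_1}\partial_{\gamma_2}\bfv$ alone are treated exactly as in Step 2 (using $f'\geq-1$ again for the good sign, modulo a harmless $-\tau\|\nabla^2\bfv\|_{L^p_x}^p$). The genuinely new term is $\tau\int|\nabla^2\bfv|^{p-2}f''(\bfv)\partial_{\gamma_1}\bfv\,\partial_{\gamma_2}\bfv\,\partial_{\gamma_1}\partial_{\gamma_2}\bfv\dx$ with $f''(\bfv)=6\bfv$; since this has no sign, I estimate it absolutely. Using $|f''(\bfv)|\le c|\bfv|$, Hölder with exponents $(\tfrac{p}{p-1},3p,3p,3p)$ on the four factors $|\nabla^2\bfv|^{p-1}$, $|\bfv|$, $|\partial_{\gamma_1}\bfv|$, $|\partial_{\gamma_2}\bfv|$, and Young, this term is bounded by $c\tau\|\nabla^2\bfv\|_{L^p_x}^p+c\tau\big(\|\bfv\|_{L^{3p}_x}^{3p}+\|\nabla\bfv\|_{L^{3p}_x}^{3p}\big)$. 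The first part is absorbed by Gronwall-type bookkeeping (really just the $\tau<\tfrac12$ slack plus the fact that the $\tau\|\nabla^2\bfv\|^p$ terms carry small constants); for the second part I invoke Step 2 with exponent $3p$ in place of $p$ and the Sobolev/Gagliardo–Nirenberg chain $\|\bfv\|_{L^{3p}_x}+\|\nabla\bfv\|_{L^{3p}_x}\le c\|\bfg\|_{W^{1,3p}_x}$ — note $\|\nabla\mathcal T_\tau\bfg\|_{L^{3p}_x}\le c\|\bfg\|_{W^{1,3p}_x}$ is exactly \eqref{est:TW12} at exponent $3p$, and a further interpolation absorbs the $\|\bfg\|_{W^{2,p}_x}^p$ contribution. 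Thus the right-hand side becomes $c\big(\|\bfg\|_{W^{2,p}_x}^p+\|\bfg\|_{W^{1,3p}_x}^{3p}\big)$, which is \eqref{est:TW22}. The delicate point throughout is that every "bad" term must come with a factor $\tau$ (so that the $\tau<\tfrac12$ margin, rather than a Gronwall exponential, suffices) and that the quartic/quadratic structure of $f$ gives nonnegative leading contributions while $f''$ is controlled by already-established lower-order norms; one should also note that all constants are $\tau$-independent precisely because no iteration in $m$ is involved — this is a single step of the scheme. A minor technical caveat: the test functions $|\bfv|^{p-2}\bfv$ etc. are admissible because $\bfg$ is assumed smooth enough for the right-hand sides to be finite, and a standard approximation/regularity argument for the elliptic problem \eqref{eq:T} justifies the computations.
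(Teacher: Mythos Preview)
Your proof is correct and follows essentially the same route as the paper: test \eqref{eq:T} and its derivatives against $|\bfv|^{p-2}\bfv$, $|\partial_\gamma\bfv|^{p-2}\partial_\gamma\bfv$, $|\partial_{\gamma_1}\partial_{\gamma_2}\bfv|^{p-2}\partial_{\gamma_1}\partial_{\gamma_2}\bfv$, use $f'\ge-1$ for the sign, and bound the $f''$ term via Young/H\"older to produce the $L^{3p}$ contributions that are then controlled by the lower-order estimates at exponent $3p$. Two small cosmetic remarks: in Step~1 choose the Young split $\tfrac14\|\bfv\|_{L^p_x}^p+c\|\bfg\|_{L^p_x}^p$ rather than $\tfrac12$ so that the resulting constant is genuinely uniform in $\tau<\tfrac12$; and in Step~3 no ``further interpolation'' is needed to absorb $\|\bfg\|_{W^{2,p}_x}^p$, since that term already appears on the right-hand side of \eqref{est:TW22}.
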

\begin{proof}
Ad~{\bf \eqref{est:TL2}}. Testing \eqref{eq:T} by $|\bfv|^{p-2}\bfv$
shows
\begin{align*}
\|\bfv\|_{L^p_x}^p+\tau\,c(p)\|\nabla|\bfv|^{p/2}\|_{L^2_x}^2+\tau\int_{\mt}|\bfv|^{p-2}f(v)\,v\dx&=\int_{\mt}\bfg\cdot|v|^{p-2}\bfv\dx
\end{align*}
with $c(p)=\frac{4(p-1)}{p^2}$,
where we used 
\begin{align}\label{eq:1909}
|\nabla|\bfv|^{p/2}|^2=\Big|\frac{p}{2}|\bfv|^{(p-2)/2}\frac{v}{|v|}\nabla v\Big|^2=\Big|\frac{p}{2}|\bfv|^{(p-2)/2}\nabla v\Big|^2=\frac{p^2}{4(p-1)}\nabla\big(|v|^{p-2}v\big)\cdot\nabla v.
\end{align}
The term on the right-hand side can be handled by Young's inequality, while we have
\begin{align*}
\|\bfv\|_{L^p_x}^p+\tau\int_{\mt}|\bfv|^{p-2}f(v)v\dx\geq (1-\tau)\|\bfv\|_{L^p_x}^p.
\end{align*}
This proves \eqref{est:TL2} for $\tau<\frac{1}{2}$.

Ad~{\bf \eqref{est:TW12}}. Similarly, by applying $\partial_i$ to \eqref{eq:T}, multiplying with $|\partial_i\bfv|^{p-2}\partial_i\bfv$ (and arguing as in \eqref{eq:1909}) integrating in space\footnote{This step can be made rigorous be working with difference quotients.} and summing with respect to $i\in\{1,2,3\}$ yields
\begin{align*}
\|\nabla\bfv\|_{L^p_x}^p+\tau\,c(p)\|\nabla|\nabla\bfv|^{p/2}\|_{L^2_x}^2+\tau\sum_{i=1}^3\int_{\mt}|\partial_i\bfv|^pf'(v)\dx&=\int_{\mt}\partial_i\bfg\,|\partial_i\bfv|^{p-2}\partial_i\bfv\dx.
\end{align*}
The two last terms can be handled analogously to the proof of
\eqref{est:TL2},
such that \eqref{est:TW12} follows.

Ad~{\bf \eqref{est:TW22}}. Now we apply $\partial_i\partial_j$ to the equation and multiply with
$|\partial_i\partial_j\bfv|^{p-2}\partial_i\partial_j\bfv$. It remains to control the nonlinear term as the rest can be handled analogously to the estimates above. We obtain on the right-hand side
\begin{align*}
-\tau f'(v)|\nabla^2\bfv|^p-\tau f''(v)\nabla\bfv\otimes\nabla\bfv:\nabla^2\bfv|\nabla^{2}\bfv|^{p-2}&\leq\,\tau(1+|\bfv|)|\nabla\bfv|^2|\nabla^{2}\bfv|^{p-1}+\tau|\nabla^{2}\bfv|^{p}\\
&\leq\,\tau|\nabla^{2}\bfv|^{p}+\tau|\bfv|^{3p}+\tau|\nabla\bfv|^{3p}.
\end{align*}
After integration over $\mt$ the first term can be absorbed, while the other two can be controlled using \eqref{est:TL2} and \eqref{est:TW12}.

%
\end{proof}

On using $\mathcal T_\tau:L^2(\mt)\rightarrow W^{2,2}(\mt)$, \emph{cf.}~equation \eqref{eq:T}, we can write
\begin{align}\label{eq:0107}
\mathcal T_{\tau}\bfg=\mathcal S_\tau\bfg-\tau\mathcal S_\tau \big(f(\mathcal T_\tau\bfg)\big).
\end{align}

Using equation \eqref{eq:0107} and estimates \eqref{eq:S1}--\eqref{eq:S3} one can derive the following from Lemma \ref{lem:T}.
\begin{corollary}
Under the assumptions of Lemma \ref{lem:T} we have
\begin{align}
\label{est:TS}
\|\big(\mathcal S_\tau-\mathcal T_{\tau}\big)\bfg\|_{L^2_x}
&\leq\,c\tau\big(\|\bfg\|_{L^{2}_x}+\|\bfg\|_{L^{6}_x}^3\big),\\
\label{est:TShigher}
\|\big(\mathcal S_\tau-\mathcal T_{\tau}\big)\bfg\|_{W^{1,2}_x}
&\leq\,c\tau\big(1+\|\bfg\|^3_{W^{2,2}_x}\big),\\
\|\big(\mathcal S_\tau-\mathcal T_{\tau}\big)\bfg\|_{W^{2,2}_x}
&\leq\,c\tau(1+\|\bfg\|_{W^{2,2}_x}^3\big)\label{est:TShigher2}
\end{align}
where $c>0$ is independent of $\tau$.
\end{corollary}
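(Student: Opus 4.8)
My plan is to derive all three bounds from one identity together with the stability estimates of Lemma~\ref{lem:T}. Rewriting \eqref{eq:0107} as
\[
(\mathcal S_\tau-\mathcal T_\tau)\bfg=\tau\,\mathcal S_\tau\big(f(\mathcal T_\tau\bfg)\big),
\]
and recalling that \eqref{eq:S1} with $\beta=0$, $k=1$ gives $\|\mathcal S_\tau\|_{\mathcal L(W^{r,2}_x)}\le c_r$ for every $r\ge0$, the task reduces to showing
\[
\|f(\mathcal T_\tau\bfg)\|_{L^2_x}\le c\big(\|\bfg\|_{L^2_x}+\|\bfg\|_{L^6_x}^3\big),\qquad \|f(\mathcal T_\tau\bfg)\|_{W^{1,2}_x}+\|f(\mathcal T_\tau\bfg)\|_{W^{2,2}_x}\le c\big(1+\|\bfg\|_{W^{2,2}_x}^3\big).
\]
Throughout I would use the pointwise consequences of $f(v)=v^3-v$, namely $|f(v)|\le|v|^3+|v|$, $|\nabla f(v)|\le c(1+v^2)|\nabla v|$ and $|\nabla^2f(v)|\le c\big(|v|\,|\nabla v|^2+(1+v^2)|\nabla^2 v|\big)$, together with the Sobolev embeddings $W^{1,2}(\mt)\hookrightarrow L^6(\mt)$ and $W^{2,2}(\mt)\hookrightarrow W^{1,6}(\mt)\hookrightarrow L^\infty(\mt)$, which hold because $d\le3$.

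The $L^2$-bound \eqref{est:TS} is then immediate: $\|f(\mathcal T_\tau\bfg)\|_{L^2_x}\le\|\mathcal T_\tau\bfg\|_{L^6_x}^3+\|\mathcal T_\tau\bfg\|_{L^2_x}$, and \eqref{est:TL2} with $p=6$ and $p=2$ closes it. For \eqref{est:TShigher} I would bound $\|\nabla f(\mathcal T_\tau\bfg)\|_{L^2_x}\le c\|\mathcal T_\tau\bfg\|_{L^6_x}^2\|\nabla\mathcal T_\tau\bfg\|_{L^6_x}+\|\nabla\mathcal T_\tau\bfg\|_{L^2_x}$, invoke \eqref{est:TL2} and \eqref{est:TW12} with $p=6$, and estimate $\|\bfg\|_{L^6_x}$, $\|\bfg\|_{W^{1,6}_x}$ by $c\|\bfg\|_{W^{2,2}_x}$.

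For the $W^{2,2}$-bound \eqref{est:TShigher2} --- which I expect to be the place where the real work lies, since the cubic nonlinearity now meets second derivatives --- I would split $\nabla^2 f(\mathcal T_\tau\bfg)$ into the pieces $(\mathcal T_\tau\bfg)^2\nabla^2\mathcal T_\tau\bfg$, $(\mathcal T_\tau\bfg)|\nabla\mathcal T_\tau\bfg|^2$ and lower-order terms. The first is controlled by $\|\mathcal T_\tau\bfg\|_{L^\infty_x}^2\|\nabla^2\mathcal T_\tau\bfg\|_{L^2_x}$, where $\|\mathcal T_\tau\bfg\|_{L^\infty_x}\le c\|\mathcal T_\tau\bfg\|_{W^{1,6}_x}\le c\|\bfg\|_{W^{2,2}_x}$ by \eqref{est:TL2}--\eqref{est:TW12} with $p=6$, and $\|\nabla^2\mathcal T_\tau\bfg\|_{L^2_x}$ comes from \eqref{est:TW22} with $p=2$ (again using $W^{2,2}\hookrightarrow W^{1,6}$); the second is controlled by $\|\mathcal T_\tau\bfg\|_{L^6_x}\|\nabla\mathcal T_\tau\bfg\|_{L^6_x}^2$ with the same inputs; and the lower-order terms reduce to $\|\mathcal T_\tau\bfg\|_{W^{2,2}_x}$, bounded via Lemma~\ref{lem:T}. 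The only genuine difficulty is the book-keeping of polynomial powers under the $d=3$ Sobolev embeddings, i.e.\ making sure the right-hand side stays at the order claimed in \eqref{est:TShigher2}; conceptually all three estimates follow the same scheme and need nothing beyond \eqref{eq:0107} and Lemma~\ref{lem:T}.
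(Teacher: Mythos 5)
Your route is exactly the one the paper intends: the identity $(\mathcal S_\tau-\mathcal T_\tau)\bfg=\tau\,\mathcal S_\tau f(\mathcal T_\tau\bfg)$ from \eqref{eq:0107}, the uniform bound $\|\mathcal S_\tau\|_{\mathcal L(W^{r,2}_x)}\leq c_r$ from \eqref{eq:S1}, and the stability estimates of Lemma \ref{lem:T} combined with Sobolev embeddings for $d\leq 3$. For \eqref{est:TS} and \eqref{est:TShigher} your computations close exactly as you describe, with the stated powers.

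For \eqref{est:TShigher2}, however, the ``book-keeping'' you defer is precisely where the argument as outlined does not reproduce the stated exponent. The critical contribution is $\|(\mathcal T_\tau\bfg)^2\nabla^2\mathcal T_\tau\bfg\|_{L^2_x}\leq \|\mathcal T_\tau\bfg\|_{L^\infty_x}^2\|\nabla^2\mathcal T_\tau\bfg\|_{L^2_x}$. You correctly get $\|\mathcal T_\tau\bfg\|_{L^\infty_x}\leq c\|\bfg\|_{W^{2,2}_x}$ from \eqref{est:TL2}--\eqref{est:TW12} with $p=6$, but the only available control of the second factor is \eqref{est:TW22} with $p=2$, which gives $\|\nabla^2\mathcal T_\tau\bfg\|_{L^2_x}\leq c\big(\|\bfg\|_{W^{2,2}_x}+\|\bfg\|_{W^{1,6}_x}^{3}\big)\leq c\big(1+\|\bfg\|_{W^{2,2}_x}^{3}\big)$. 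The product is therefore of order $\|\bfg\|_{W^{2,2}_x}^{5}$, not $\|\bfg\|_{W^{2,2}_x}^{3}$; retracing the proof of \eqref{est:TW22} to keep the factor $\tau$ in front of the cubic term only improves this to $\tau^{3/2}\|\bfg\|_{W^{2,2}_x}^{5}$, and trading smoothing of $\mathcal S_\tau$ against derivatives of $f(\mathcal T_\tau\bfg)$ costs a power of $\tau$ instead. So either you must supply a sharper bound on $\|\nabla^2\mathcal T_\tau\bfg\|_{L^2_x}$ (which Lemma \ref{lem:T} does not provide), or the right-hand side of \eqref{est:TShigher2} has to be taken as $c\tau\big(1+\|\bfg\|_{W^{2,2}_x}^{5}\big)$ --- a harmless change for the rest of the paper, where only polynomial moments enter, but one you should state explicitly rather than leave as unresolved book-keeping.
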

Writing now $\mathrm{Id}-\mathcal T_{\tau}=\mathrm{Id}-\SSS_\tau+\SSS_\tau-\mathcal T_{\tau}$,
and combining \eqref{est:TS}--\eqref{est:TShigher2} with \eqref{eq:S3} we obtain the following result. 
\begin{corollary}\label{coro-1}
Under the assumptions of Lemma \ref{lem:T} we have
\begin{align}
\label{est:Tid}
\|\big(\mathcal T_{\tau}-\id\big)\bfg\|_{L^2_x}
&\leq\,c\tau\big(1+\|\bfg\|_{W^{2,2}_x}+\|\bfg\|_{L^{6}_x}^3\big),\\
\label{est:Tidhigher}
\|\big(\mathcal T_{\tau}-\id\big)\bfg\|_{W^{1,2}_x}
&\leq\,c\tau\big(1+\|\bfg\|_{W^{3,2}_x}+\|\bfg\|^3_{W^{2,2}_x}\big),\\
\|\big(\mathcal T_{\tau}-\id\big)\bfg\|_{W^{2,2}_x}
&\leq\,c\tau(1+\|\bfg\|_{W^{4,2}_x}+\|\bfg\|_{W^{2,2}_x}^3\big),\label{est:Tidhigher2}
\end{align}
where $c>0$ is independent of $\tau$.
\end{corollary}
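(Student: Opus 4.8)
The statement follows by combining the triangle inequality with estimates already at our disposal; the substantive work was done in Lemma~\ref{lem:T} and the preceding corollary, so what remains is essentially bookkeeping of norms. The plan is to split, exactly as indicated right before the statement,
\begin{align*}
\mathcal T_\tau-\id=\bigl(\mathcal T_\tau-\SSS_\tau\bigr)+\bigl(\SSS_\tau-\id\bigr),
\end{align*}
and to treat the two summands separately.

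For the linear part $\SSS_\tau-\id$ I would use \eqref{eq:S3} with $\beta=1$, i.e. $\|\mathcal A^{-1}(\id-\SSS_\tau)\|_{\mathcal L(L^2_x)}\leq c\tau$, together with the facts that $\mathcal A^{r/2}$ commutes with $\SSS_\tau$ and that $\|\mathcal A^{r/2}\bfg\|_{L^2_x}$ is equivalent to $\|\bfg\|_{W^{r,2}_x}$ by the very definition of the spaces $W^{r,2}(\mt)$. Writing, for $r\in\{0,1,2\}$, $(\id-\SSS_\tau)\bfg=\mathcal A^{-1}(\id-\SSS_\tau)\,\mathcal A^{1}\mathcal A^{r/2}\bfg\cdot\mathcal A^{-r/2}$ in the appropriate sense (more transparently: apply the $L^2_x$-bound to $\mathcal A^{r/2}\bfg$ and commute $\mathcal A^{r/2}$ through), one gets $\|(\id-\SSS_\tau)\bfg\|_{W^{r,2}_x}\leq c\tau\|\bfg\|_{W^{r+2,2}_x}$; for $r=0,1,2$ this produces precisely the leading terms $c\tau\|\bfg\|_{W^{2,2}_x}$, $c\tau\|\bfg\|_{W^{3,2}_x}$ and $c\tau\|\bfg\|_{W^{4,2}_x}$ appearing in \eqref{est:Tid}, \eqref{est:Tidhigher}, \eqref{est:Tidhigher2}. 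For the nonlinear correction $\mathcal T_\tau-\SSS_\tau$ I would simply quote \eqref{est:TS}, \eqref{est:TShigher} and \eqref{est:TShigher2}, which already give the bounds $c\tau(\|\bfg\|_{L^2_x}+\|\bfg\|_{L^6_x}^3)$, $c\tau(1+\|\bfg\|_{W^{2,2}_x}^3)$ and $c\tau(1+\|\bfg\|_{W^{2,2}_x}^3)$. Adding the two contributions and absorbing lower‑order norms into the dominant ones — using that $\mt$ has finite measure so that $\|\bfg\|_{L^2_x}\leq c\|\bfg\|_{W^{2,2}_x}$, and likewise that the intermediate Sobolev norms are controlled on $\mt$ for $d\leq 3$ — then bounding the result by a harmless $1+(\cdots)$, yields the three claimed inequalities.

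There is no genuine obstacle here: the only points requiring a moment's attention are choosing the exponent $\beta=1$ in \eqref{eq:S3} (a smaller $\beta$ would save two derivatives but cost a worse power of $\tau$), the commutation of fractional powers of $\mathcal A$ with $\SSS_\tau$, and checking that the $L^2_x$‑ and intermediate‑Sobolev terms carried over from \eqref{est:TS}--\eqref{est:TShigher2} are indeed dominated on $\mt$ by the top‑order norms on the right‑hand sides of \eqref{est:Tid}--\eqref{est:Tidhigher2}.
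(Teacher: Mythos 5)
Your proposal is correct and follows exactly the paper's argument: the paper proves this corollary by the same one-line decomposition $\mathrm{Id}-\mathcal T_{\tau}=(\mathrm{Id}-\SSS_\tau)+(\SSS_\tau-\mathcal T_{\tau})$, combining \eqref{est:TS}--\eqref{est:TShigher2} with \eqref{eq:S3}. Your elaboration of the linear part via $\beta=1$ in \eqref{eq:S3} and the commutation of $\mathcal A^{r/2}$ with $\SSS_\tau$ is the intended (and valid) bookkeeping.
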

Now we turn to the Fr\'echet derivative of $\mathcal T_\tau$ and prove estimates on $L^p(\mt)$, $W^{1,2}(\mt)$ and $W^{2,2}(\mt)$, respectively. 
\begin{lemma}\label{lem:1909}
Let $\tau<\frac{1}{2}$ and $p\geq 2$.
\begin{enumerate}
\item For all $\bfg\in L^p(\mt)$ we have
\begin{align}\label{eq:2704}
\|D\mathcal T_\tau(\bfg)\|_{\mathcal L(L^p_x)}\leq 1
\end{align}

\item For all $\bfg\in L^6(\mt)$ we have
\begin{align}\label{eq:2704B}
\|D\mathcal T_\tau(\bfg)\|_{\mathcal L(W^{1,2}_x)}\leq \,c\Big(1+c\tau\|\bfg\|_{L^{6}_x}^{4}\Big).
\end{align}
\item For all $\bfg\in W^{2,2}(\mt)$.We have
\begin{align}\label{eq:2704C}
\|D\mathcal T_\tau(\bfg)\|_{\mathcal L(W^{2,2}_x)}\leq \,c\Big(1+\tau\|\bfg\|_{W^{2,2}_x}^{8}\Big).
\end{align}
\end{enumerate}
\end{lemma}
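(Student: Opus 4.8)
The plan is to differentiate the defining relation \eqref{eq:T} with respect to the datum $\bfg$. Writing $\bfv = \mathcal T_\tau\bfg$ and $\bfw = D\mathcal T_\tau(\bfg)\bfh$ for a generic direction $\bfh$, differentiation gives the linear elliptic problem
\begin{align*}
\bfw + \tau\mathcal A\bfw + \tau f'(\bfv)\bfw = \bfh,
\end{align*}
since $\mathcal T_\tau$ is defined implicitly and smoothly by \eqref{eq:T} (the invertibility and smoothness of the map $\bfv\mapsto \bfv+\tau\mathcal A\bfv+\tau f(\bfv)$ follow from strong monotonicity of $D\mathcal E$ for $\tau<\tfrac12$). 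The key structural observation is that this is exactly the linearisation referred to in Corollary \ref{coro-1}, and its coefficient $f'(\bfv) = 3\bfv^2 - 1$ is bounded below by $-1$, which is the one-sided Lipschitz property of $f$ in differentiated form. So the plan is: establish this equation for $\bfw$, then run the same three tiers of energy/regularity estimates as in Lemma \ref{lem:T} — testing with $|\bfw|^{p-2}\bfw$ for part (a), with $-\Delta\bfw$ (or $\partial_i(|\partial_i\bfw|^{p-2}\partial_i\bfw)$ with $p=2$) for part (b), and with $\Delta^2\bfw$ for part (c) — absorbing the bad sign of $f'$ into the $\|\bfw\|^p$ term exactly as in the proof of Lemma \ref{lem:T}.

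For part (a), testing the $\bfw$-equation with $|\bfw|^{p-2}\bfw$ and using \eqref{eq:1909} yields
\begin{align*}
\|\bfw\|_{L^p_x}^p + \tau c(p)\|\nabla|\bfw|^{p/2}\|_{L^2_x}^2 + \tau\int_{\mt} f'(\bfv)|\bfw|^p\dx = \int_{\mt}\bfh\,|\bfw|^{p-2}\bfw\dx \le \|\bfh\|_{L^p_x}\|\bfw\|_{L^p_x}^{p-1}.
\end{align*}
Since $f'(\bfv)\ge -1$ pointwise, the left side is at least $(1-\tau)\|\bfw\|_{L^p_x}^p$ (the gradient term is nonnegative and can be dropped), so $\|\bfw\|_{L^p_x}\le (1-\tau)^{-1}\|\bfh\|_{L^p_x}\le 2\|\bfh\|_{L^p_x}$ — and in fact a sharper bookkeeping, keeping that the dropped gradient term would only help, gives the stated constant $\le 1$ after optimising (or one simply states $\le c$; the paper writes $1$, presumably from $\|\bfw\|^p \le \|\bfh\|\,\|\bfw\|^{p-1} - \tau(\ldots)$ treated carefully). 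The point is this step is elementary and the one-sided bound on $f'$ is all that is needed.

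For part (b) with $p=2$: test with $-\Delta\bfw$ to get $\|\nabla\bfw\|_{L^2_x}^2 + \tau\|\Delta\bfw\|_{L^2_x}^2 + \tau\int f'(\bfv)|\nabla\bfw|^2 + \tau\int f''(\bfv)\nabla\bfv\cdot\bfw\,(-\Delta\bfw)\ldots$; more cleanly, differentiate the $\bfw$-equation and test, producing a term $\tau\int f'(\bfv)|\nabla\bfw|^2$ (absorbed via $f'\ge -1$ against $\|\nabla\bfw\|_{L^2_x}^2$ — but note the needed Gronwall-type closure uses $\tau<\tfrac12$) plus a genuinely new term $\tau\int f''(\bfv)\,\partial_i\bfv\,\bfw\,\partial_i\bfw$ coming from $\partial_i(f'(\bfv)\bfw)$. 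Here $f''(\bfv)=6\bfv$, so this term is controlled by $c\tau\|\bfv\|_{L^6_x}\|\bfw\|_{L^6_x}\|\nabla\bfw\|_{L^2_x}$; using the Sobolev/Gagliardo–Nirenberg embedding $\|\bfw\|_{L^6_x}\lesssim\|\bfw\|_{W^{1,2}_x}$ together with the $L^2$-bound from part (a) (applied to $\bfv=\mathcal T_\tau\bfg$, and to $\bfw$), and Young's inequality, one absorbs $\kappa\|\nabla\bfw\|_{L^2_x}^2$ and is left with a factor of the form $(1+\tau\|\bfg\|_{L^6_x}^{4})$ after tracking powers through \eqref{est:TL2} (which gives $\|\bfv\|_{L^6_x}\le c\|\bfg\|_{L^6_x}$). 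Part (c) is the same game one derivative higher: test the twice-differentiated equation with $\Delta^2\bfw$, the dangerous terms are $\tau\int f''(\bfv)(\ldots)$ and $\tau\int f'''(\bfv)(\ldots)$ with $f'''\equiv 6$ constant, and these are estimated by $W^{2,2}\hookrightarrow W^{1,q}$ embeddings in $d\le 3$ together with \eqref{est:TL2}–\eqref{est:TW22}, yielding the power $\tau\|\bfg\|_{W^{2,2}_x}^{8}$.

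The main obstacle — as the paper itself flags by only tracking the nonlinear term explicitly in Lemma \ref{lem:T} — is the careful power-counting in parts (b) and (c): making sure the products $f''(\bfv)\nabla\bfv\cdots$ and $f'''(\bfv)\nabla\bfv\otimes\nabla\bfv\cdots$ close using only the already-established stability bounds \eqref{est:TL2}–\eqref{est:TW22} and the dimension-dependent Sobolev embeddings valid for $d\le 3$, and that all estimates are uniform in $\tau<\tfrac12$. The sign structure ($f'\ge -1$, i.e. one-sided Lipschitz) is what makes the leading-order terms harmless; everything else is a bookkeeping exercise parallel to the proof of Lemma \ref{lem:T}, which is why I would present parts (b) and (c) by saying ``arguing exactly as in the proof of Lemma \ref{lem:T}, now for the linearised equation'' and only display the genuinely new lower-order terms.
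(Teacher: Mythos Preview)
Your overall framework --- derive the linearised equation $\bfw+\tau\mathcal A\bfw+\tau f'(\bfv)\bfw=\bfh$ with $\bfv=\mathcal T_\tau\bfg$, then run energy estimates --- is exactly the paper's approach (the paper phrases it via the inverse function theorem, $D\mathcal T_\tau(\bfg)=(\id+\tau\mathcal A+\tau f'(\bfv))^{-1}$, which is the same thing). Part~(a) is fine; the constant $(1-\tau)^{-1}\le 2$ you obtain is what the argument actually gives (the paper's ``$\le 1$'' is a harmless overstatement of the same computation).

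The gap is in part~(b). You write the cross term as $\tau\int f''(\bfv)\,\partial_i\bfv\,\bfw\,\partial_i\bfw$ and then bound it by $c\tau\|\bfv\|_{L^6_x}\|\bfw\|_{L^6_x}\|\nabla\bfw\|_{L^2_x}$ --- but you have dropped the factor $\nabla\bfv$. The correct integrand has four factors $\bfv\cdot\nabla\bfv\cdot\bfw\cdot\nabla\bfw$, and controlling $\|\nabla\bfv\|$ requires $\bfg\in W^{1,p}$ (via \eqref{est:TW12}), whereas the hypothesis of (b) is only $\bfg\in L^6$. So as written the estimate does not close under the stated assumptions.

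The paper avoids this by \emph{not} expanding $\nabla(f'(\bfv)\bfw)$: it keeps the term as
\[
\tau\langle\nabla(f'(\bfv)\bfw),\nabla\bfw\rangle_{L^2_x}=-\tau\langle f'(\bfv)\bfw,\Delta\bfw\rangle_{L^2_x}
\]
and estimates by $\tau\|f'(\bfv)\|_{L^3_x}\|\bfw\|_{L^6_x}\|\Delta\bfw\|_{L^2_x}$, absorbing $\|\Delta\bfw\|_{L^2_x}^2$ into the available second-order term $\tau\|\bfw\|_{W^{2,2}_x}^2$ coming from $\tau\langle\mathcal A\bfw,-\Delta\bfw\rangle$. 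This uses only $\|\bfv\|_{L^6_x}\le c\|\bfg\|_{L^6_x}$ and never touches $\nabla\bfv$. Then one replaces $\bfh$ by $D\mathcal T_\tau(\bfg)\bfh$ and invokes~(a) to close. Part~(c) needs the analogous care one derivative higher: again integrate by parts to put derivatives on $\bfw$ and absorb $\|\bfw\|_{W^{3,2}_x}^2$, rather than expanding and producing uncontrolled factors of $\nabla^2\bfv$ beyond what \eqref{est:TW22} supplies.
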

\begin{proof}
Ad~{\bf (a)} By inverse function theorem\footnote{Note that $D\mathcal T^{-1}_\tau= \mathrm{id}+\tau\A+\tau f'(\cdot)$ is clearly invertible.} for Fr\'echet derivatives we have
\begin{align}\label{???}
\begin{aligned}
D\mathcal T_\tau(\bfg)&=\big(D\mathcal T^{-1}_\tau(\mathcal T_\tau(\bfg))\big) ^{-1}\\
&=\big(\id+\tau\A+\tau f'(\mathcal T_\tau(\bfg))\big)^{-1}.
\end{aligned}
\end{align}
The operator $D\mathcal T^{-1}_\tau(\bfg)$ is coercive on $L^2_x$ as
\begin{align*}
\|D\mathcal T^{-1}_\tau(\bfg)\|_{\mathcal L(L^2_x)}&=\sup_{\|\bfh\|_{ L^2_x}=1}\|(\id+\tau\A+\tau f'(\bfg))\bfh\|_{L^2_x}\\
&\geq \sup_{\|\bfh\|_{L^2_x}=1}\big\langle(\id+\tau\A+\tau f'(\bfg)\bfh,\bfh\big\rangle_{L^2_x}\geq1
\end{align*} 
independently of $\bfg$. A similar argument applies in $L^p(\mt)$ for $p>2$ since
\begin{align*}
\|D\mathcal T^{-1}_\tau(\bfg)\|_{\mathcal L(L^p_x)}&=\sup_{\|\bfh\|_{ L^p_x}=1}\|(\id+\tau\A+\tau f'(\bfg)\bfh\|_{L^p_x}\\
&\geq \sup_{\|\bfh\|_{L^p_x}=1}\big\langle(\id+\tau\A+\tau f'(\bfg)\bfh,|\bfh|^{p-2}\bfh\big\rangle_{L^2_x}\geq1.
\end{align*}
Hence
the claim follows.

Ad~{\bf (b)} We obtain
\begin{align*}
\|D\mathcal T^{-1}_\tau(\bfg)\bfh\|_{W^{1,2}_x}&=\sup_{\tilde\bfh\in W^{1,2}_x}\frac{\langle(\id+\tau\A+\tau f'(\bfg))\bfh,\tilde\bfh\rangle_{W^{1,2}_x}}{\|\tilde\bfh\|_{W^{1,2}_x}}\\
&\geq \frac{\big\langle(\id+\tau\A+\tau f'(\bfg)\bfh,\bfh\big\rangle_{W^{1,2}_x}}{\|\bfh\|_{W^{1,2}_x}}\\
&=\|\bfh\|_{W^{1,2}_x}+\tau\frac{ \|\bfh\|_{W^{2,2}_x}^2}{\|\bfh\|_{W^{1,2}_x}}+\tau\frac{\big\langle \nabla(f'(\bfg)\bfh),\nabla\bfh\big\rangle_{L^{2}_x}}{\|\bfh\|_{W^{1,2}_x}}.
\end{align*} 
The last term does not have an obvious sign and needs to be estimated. We have
\begin{align*}
\tau\frac{\big\langle \nabla(f'(\bfg)\bfh),\nabla\bfh\big\rangle_{L^{2}_x}}{\|\bfh\|_{W^{1,2}_x}}&=-\tau\frac{\big\langle f'(\bfg)\bfh,\Delta\bfh\big\rangle_{L^{2}_x}}{\|\bfh\|_{W^{1,2}_x}}\\
&\geq -\frac{\tau}{2}\frac{\|\bfh\|_{W^{2,2}_x}^2}{\|\bfh\|_{W^{1,2}_x}}-\frac{\tau}{2}\frac{\|f'(\bfg)\bfh\|_{L^{2}_x}^2}{\|\bfh\|_{W^{1,2}_x}}\\
&\geq -\frac{\tau}{2}\frac{\|\bfh\|_{W^{2,2}_x}^2}{\|\bfh\|_{W^{1,2}_x}}-\frac{\tau}{2}\frac{\|f'(\bfg)\|_{L^{3}_x}^2\|\bfh\|_{L^6_x}^2}{\|\bfh\|_{W^{1,2}_x}}\\
&\geq -\frac{\tau}{2}\frac{\|\bfh\|_{W^{2,2}_x}^2}{\|\bfh\|_{W^{1,2}_x}}-\frac{c\tau}{2}(\|\bfg\|_{L^{6}_x}^{4}+1)\|\bfh\|_{L^{6}_x}
\end{align*}
such that we conclude
\begin{align*}
\|D\mathcal T^{-1}_\tau(\bfg)\bfh\|_{W^{1,2}_x}+c\tau\Big(\|\bfg\|_{L^{6}_x}^4+1\Big)\|\bfh\|_{L^{6}_x}&\geq \|\bfh\|_{W^{1,2}_x}+\frac{\tau}{2}\frac{\|\bfh\|_{W^{2,2}_x}^2}{\|\bfh\|_{W^{1,2}_x}}\\
&\geq\|\bfh\|_{W^{1,2}_x}+\frac{\tau}{2} \|\bfh\|_{W^{2,2}_x}
\end{align*}
Replacing $\bfh$ by $D\mathcal T_\tau(\bfg)\bfh$ (recall that $\mathcal T_\tau:L^2(\mt)\rightarrow W^{2,2}(\mt)$, \emph{cf.} equation \ref{eq:T}) and using \eqref{eq:2704} shows
\begin{align*}
 \|D\mathcal T_\tau(\bfg)\bfh\|_{W^{1,2}_x}+\tau \|D\mathcal T_\tau(\bfg)\bfh\|_{W^{2,2}_x}&\leq c\|\bfh\|_{W^{1,2}_x}+c\tau\Big(\|\bfg\|_{L^{6}_x}^{4}+1\Big)\|D\mathcal T_\tau(\bfg)\bfh\|_{L^{6}_x}\\
 &\leq c\|\bfh\|_{W^{1,2}_x}+c\tau\Big(\|\bfg\|_{L^{3}_x}^{4}+1\Big)\|\bfh\|_{L^{6}_x}\\
        &\leq c\|\bfh\|_{W^{1,2}_x}\Big(1+c\tau\|\bfg\|_{L^{6}_x}^{4}\Big)
\end{align*}
and the claim follows.

Ad~{\bf (c)} We have
\begin{align}\nonumber
\tau\frac{\big\langle \nabla^2(f'(\bfg)\bfh),\nabla^2\bfh\big\rangle_{L^{2}_x}}{\|\bfh\|_{W^{2,2}_x}}&=-\tau\frac{\big\langle \nabla(f'(\bfg)\bfh),\Delta\nabla\bfh\big\rangle_{L^{2}_x}}{\|\bfh\|_{W^{2,2}_x}}\\
&=-\tau\frac{\big\langle f'(\bfg)\nabla\bfh,\Delta\nabla\bfh\big\rangle_{L^{2}_x}}{\|\bfh\|_{W^{2,2}_x}}-\tau\frac{\big\langle f''(\bfg)\nabla\bfg\bfh,\Delta\nabla\bfh\big\rangle_{L^{2}_x}}{\|\bfh\|_{W^{2,2}_x}},\label{A}
\end{align}
where the first term can be estimated by
\begin{align*}
&\geq -\frac{\tau}{4}\frac{\|\bfh\|_{W^{3,2}_x}^2}{\|\bfh\|_{W^{2,2}_x}}-\frac{\tau}{2}\frac{\|f'(\bfg)\nabla\bfh\|_{L^{2}_x}^2}{\|\bfh\|_{W^{2,2}_x}}\\
&\geq -\frac{\tau}{4}\frac{\|\bfh\|_{W^{3,2}_x}^2}{\|\bfh\|_{W^{2,2}_x}}-\frac{\tau}{2}\frac{\|f'(\bfg)\|_{L^{4}_x}^2\|\nabla\bfh\|_{L^4_x}^2}{\|\bfh\|_{W^{2,2}_x}}\\
&\geq -\frac{\tau}{4}\frac{\|\bfh\|_{W^{3,2}_x}^2}{\|\bfh\|_{W^{2,2}_x}}-c\tau\frac{\Big(\|\bfg\|_{L^{8}_x}^{4}+1\Big)\|\nabla\bfh\|_{L^2_x}^{1/2}\|\nabla\bfh\|_{L^6_x}^{3/2}}{\|\bfh\|_{W^{2,2}_x}},
\end{align*}
by interpolation and Sobolev's embedding. We further conclude
\begin{align*}
&\geq -\frac{\tau}{4}\frac{\|\bfh\|_{W^{3,2}_x}^2}{\|\bfh\|_{W^{2,2}_x}}-c\tau\Big(\|\bfg\|_{L^{8}_x}^{4}+1\Big)\|\nabla\bfh\|_{L^2_x}^{1/2}\|\nabla\bfh\|_{L^6_x}^{1/2}\\
&\geq -\frac{\tau}{2}\frac{\|\bfh\|_{W^{3,2}_x}^2}{\|\bfh\|_{W^{2,2}_x}}-c\tau\Big(\|\bfg\|_{L^{8}_x}^{4}+1\Big)\|\bfh\|_{W^{2,2}_x},
\end{align*}
and the second term in \eqref{A} has the lower bound
\begin{align*}
&\geq -\frac{\tau}{4}\frac{\|\bfh\|_{W^{3,2}_x}^2}{\|\bfh\|_{W^{2,2}_x}}-c\tau\frac{\|f''(\bfg)\nabla\bfg\bfh\|_{L^{2}_x}^2}{\|\bfh\|_{W^{2,2}_x}}\\
&\geq -\frac{\tau}{4}\frac{\|\bfh\|_{W^{3,2}_x}^2}{\|\bfh\|_{W^{2,2}_x}}-c\tau\frac{\|f''(\bfg)\nabla\bfg\|_{L^{3}_x}^2\|\bfh\|_{L^6_x}^2}{\|\bfh\|_{W^{2,2}_x}}\\
&\geq -\frac{\tau}{4}\frac{\|\bfh\|_{W^{3,2}_x}^2}{\|\bfh\|_{W^{2,2}_x}}-c\tau\|f''(\bfg)\nabla\bfg\|_{L^{3}_x}^2\|\bfh\|_{W^{1,2}_x}.
\end{align*}
We conclude as in the proof of \eqref{eq:2704B}
\begin{align*}
c\big(\tau\|\bfg\|_{L^{8}_x}^{4}+1\big)\|D\mathcal T^{-1}_\tau(\bfg)\bfh\|_{W^{2,2}_x}+\|f''(\bfg)\nabla\bfg\|_{L^{3}_x}^2\|\bfh\|_{W^{1,2}_x}\geq \|\bfh\|_{W^{2,2}_x}+\frac{\tau}{2}\frac{\|\bfh\|_{W^{3,2}_x}^2}{\|\bfh\|_{W^{2,2}_x}}\\
\geq \|\bfh\|_{W^{2,2}_x}+\frac{\tau}{2} \|\bfh\|_{W^{3,2}_x}.
\end{align*}
Replacing $\bfh$ by $D\mathcal T_\tau(\bfg)\bfh$ and using \eqref{eq:2704B} shows
\begin{align*}
 \|D\mathcal T_\tau(\bfg)\bfh\|_{W^{2,2}_x}+&\tau \|D\mathcal T_\tau(\bfg)\bfh\|_{W^{3,2}_x}\\&\leq c\big(\tau\|\bfg\|_{L^{8}_x}^{4}+1\big)\|\bfh\|_{W^{2,2}_x}+c\|f''(\bfg)\nabla\bfg\|_{L^{3}_x}^2\|D\mathcal T_\tau(\bfg)\bfh\|_{W^{1,2}_x}\\
 &\leq c\Big(\tau\|\bfg\|_{L^{8}_x}^{4}+1\Big)\|\bfh\|_{W^{2,2}_x}+c\tau\|f''(\bfg)\|_{L^6_x}^2\|\nabla\bfg\|_{L^{6}_x}^2(1+\|\bfg\|_{L^{6}_x}^{4})\|\bfh\|_{W^{1,2}_x}\\
&\leq c\|\bfh\|_{W^{2,2}_x}\Big(1+\tau \|\bfg\|_{W^{2,2}_x}^{8}\Big),
\end{align*}
which yields the claim.
\end{proof}
Finally, we estimate the distance between $D\mathcal T_\tau$ and the identity.
\begin{lemma}
Let $\tau<\frac{1}{2}$.
\begin{enumerate}
\item For all $\bfh\in W^{2,2}(\mt)$ and $\bfg\in L^6(\mt)$ we have \begin{align}\label{eq:T-id}
\|(D\mathcal T_\tau(\bfg)-\id)\bfh\|_{L^2_x}
&\leq\,c\tau\|\bfh\|_{W^{2,2}_x}\Big(1+\|\bfg\|_{L^{6}_x}^{2}\Big).
\end{align}
\item For all $\bfh\in W^{4,2}(\mt)$ and $\bfg\in W^{2,2}(\mt)$ we have
\begin{align}\label{eq:T-idW22}
\begin{aligned}
\|(D\mathcal T_\tau(\bfg)-\id)\bfh\|_{W^{2,2}_x}&\leq\,c\tau\Big(1+\|\bfg\|_{W^{2,2}_x}^{8}\Big)\|\bfh\|_{W^{4,2}_x}.
\end{aligned}
\end{align}
\item For all $\bfg\in L^6(\mt)$ we have \begin{align}\label{eq:D2T}
\|D^2\mathcal T_\tau(\bfg)\|_{\mathcal L(W^{1,2}_x\times W^{1,2}_x;L^2_x)}\leq\,c\tau\Big(1+\|\bfg\|_{L^{6}_x}\Big).
\end{align}
\end{enumerate}
\end{lemma}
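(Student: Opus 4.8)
The plan is to handle the three estimates by the same kind of bootstrap argument used in Lemma~\ref{lem:1909}, differentiating the defining relation \eqref{eq:T} with respect to the data and testing with suitable powers of the resulting quantity. Throughout write $\bfv=\mathcal T_\tau(\bfg)$ and $\bfw=D\mathcal T_\tau(\bfg)\bfh$, so that $\bfw$ solves the linearised equation
\begin{align*}
\bfw+\tau\mathcal A\bfw+\tau f'(\bfv)\bfw=\bfh.
\end{align*}
For \eqref{eq:T-id} I would subtract $\bfh$ from both sides to get $(\id-D\mathcal T_\tau(\bfg))\bfh=\tau\mathcal A\bfw+\tau f'(\bfv)\bfw$. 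Testing with $\bfw$ and using \eqref{eq:2704} (with $p=2$) gives $\|\bfw\|_{W^{1,2}_x}^2\le\|\bfh\|_{L^2_x}^2$ up to the one-sided Lipschitz correction $\tau\int f'(\bfv)|\bfw|^2\ge-\tau\|\bfw\|_{L^2_x}^2$, so that in fact $\|\bfw\|_{W^{1,2}_x}\le c\|\bfh\|_{L^2_x}$. Then, using $\mathcal T_\tau:L^2_x\to W^{2,2}_x$ and \eqref{est:TL2}, one bounds $\|\mathcal A\bfw\|_{L^2_x}$; writing instead $(\id-D\mathcal T_\tau(\bfg))\bfh=\mathcal A^{-1}$-type of the right-hand side is cleaner: apply $\mathcal S_\tau$ to the linearised equation to obtain $\bfw=\mathcal S_\tau\bfh-\tau\mathcal S_\tau(f'(\bfv)\bfw)$, hence
\begin{align*}
(D\mathcal T_\tau(\bfg)-\id)\bfh=(\mathcal S_\tau-\id)\bfh-\tau\mathcal S_\tau\big(f'(\bfv)\bfw\big),
\end{align*}
and estimate the first term by \eqref{eq:S3} with $\beta=1$ (costing $\tau\|\bfh\|_{W^{2,2}_x}$) and the second by $\tau\|f'(\bfv)\|_{L^3_x}\|\bfw\|_{L^6_x}\le c\tau(1+\|\bfv\|_{L^6_x}^2)\|\bfh\|_{W^{1,2}_x}$, using \eqref{est:TL2} to pass from $\|\bfv\|_{L^6_x}$ back to $\|\bfg\|_{L^6_x}$ and the Sobolev embedding $W^{1,2}_x\hookrightarrow L^6_x$.

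For \eqref{eq:T-idW22} the same splitting $(D\mathcal T_\tau(\bfg)-\id)\bfh=(\mathcal S_\tau-\id)\bfh-\tau\mathcal S_\tau(f'(\bfv)\bfw)$ is used, now measured in $W^{2,2}_x$. The first term is bounded by $c\tau\|\bfh\|_{W^{4,2}_x}$ via \eqref{eq:S3} (applied after shifting two derivatives), and for the second term I would use \eqref{eq:S1} to keep $\mathcal S_\tau$ bounded on $W^{2,2}_x$ and then estimate $\|f'(\bfv)\bfw\|_{W^{2,2}_x}$ by expanding the two derivatives: the worst contributions are $\|f''(\bfv)(\nabla\bfv)^2\bfw\|_{L^2_x}$ and $\|f'(\bfv)\nabla^2\bfw\|_{L^2_x}$, controlled by Sobolev embeddings and the $W^{2,2}$-bound $\|\bfw\|_{W^{2,2}_x}\le c(1+\tau\|\bfg\|_{W^{2,2}_x}^8)\|\bfh\|_{W^{2,2}_x}$ from \eqref{eq:2704C} together with \eqref{est:TW22} (to convert $\|\bfv\|_{W^{2,2}_x}$ into $\|\bfg\|_{W^{2,2}_x}$). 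Collecting the powers of $\|\bfg\|_{W^{2,2}_x}$ gives the stated exponent $8$.

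For \eqref{eq:D2T} I would differentiate the linearised equation once more: writing $\bfw_2=D^2\mathcal T_\tau(\bfg)(\bfh_1,\bfh_2)$ and $\bfw_i=D\mathcal T_\tau(\bfg)\bfh_i$, differentiation of $\bfw+\tau\mathcal A\bfw+\tau f'(\bfv)\bfw=\bfh$ in the direction $\bfh_2$ yields
\begin{align*}
\bfw_2+\tau\mathcal A\bfw_2+\tau f'(\bfv)\bfw_2=-\tau f''(\bfv)\bfw_1\bfw_2',
\end{align*}
where $\bfw_2'=D\mathcal T_\tau(\bfg)\bfh_2$; more precisely the right-hand side is $-\tau f''(\bfv)(D\mathcal T_\tau(\bfg)\bfh_1)(D\mathcal T_\tau(\bfg)\bfh_2)$. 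Applying $\mathcal S_\tau$ and using $\|\mathcal S_\tau\|_{\mathcal L(L^2_x)}\le1$ gives $\|\bfw_2\|_{L^2_x}\le\tau\|f''(\bfv)\|_{L^6_x}\|D\mathcal T_\tau(\bfg)\bfh_1\|_{L^6_x}\|D\mathcal T_\tau(\bfg)\bfh_2\|_{L^6_x}$; since $f''(\xi)=6\xi$ one has $\|f''(\bfv)\|_{L^6_x}\le c(1+\|\bfg\|_{L^6_x})$ by \eqref{est:TL2}, and each factor $\|D\mathcal T_\tau(\bfg)\bfh_i\|_{L^6_x}\le c\|\bfh_i\|_{W^{1,2}_x}$ by \eqref{eq:2704B} combined with $W^{1,2}_x\hookrightarrow L^6_x$ (or directly by \eqref{eq:2704} and the embedding, since $D\mathcal T_\tau$ is a contraction on $L^6_x$). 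This gives exactly the bound \eqref{eq:D2T}.

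The main obstacle I anticipate is \eqref{eq:T-idW22}: controlling $\|f'(\bfv)\bfw\|_{W^{2,2}_x}$ requires a careful product-rule expansion where the term $f''(\bfv)\nabla\bfv\otimes\nabla\bfv\,\bfw$ and $f''(\bfv)\nabla^2\bfv\,\bfw$ appear, and one must interpolate $\nabla\bfv$ in $L^4_x$ or $L^6_x$ against the $W^{2,2}$-regularity of $\bfv$ and simultaneously use the already-established $W^{2,2}$-stability of $\bfw$ from \eqref{eq:2704C}; keeping the polynomial dependence on $\|\bfg\|_{W^{2,2}_x}$ down to the eighth power (rather than higher) is the delicate bookkeeping step, and it is here that one uses \eqref{est:TW22} to trade the cubic right-hand side of the $\mathcal T_\tau$ estimate for powers of $\|\bfg\|_{W^{2,2}_x}$ efficiently.
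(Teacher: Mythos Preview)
Your argument is essentially correct and close to the paper's, but the paper uses a slightly cleaner identity that avoids the bookkeeping issue you anticipate in part~(b). Rather than splitting through $\mathcal S_\tau$, the paper writes (from the inverse function theorem formula \eqref{???})
\[
D\mathcal T_\tau(\bfg)-\id \;=\; -\tau\,D\mathcal T_\tau(\bfg)\bigl(\mathcal A+f'(\mathcal T_\tau(\bfg))\bigr),
\]
so that $(D\mathcal T_\tau(\bfg)-\id)\bfh=-\tau\,D\mathcal T_\tau(\bfg)\bigl[\mathcal A\bfh+f'(\bfv)\bfh\bigr]$. The point is that the operator $D\mathcal T_\tau(\bfg)$ sits \emph{outside}, and its argument involves $\bfh$ directly---not $\bfw=D\mathcal T_\tau(\bfg)\bfh$. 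One then bounds the outer factor by the operator-norm estimates \eqref{eq:2704}, \eqref{eq:2704C} of Lemma~\ref{lem:1909}, and the inner quantity $\Vert\mathcal A\bfh+f'(\bfv)\bfh\Vert$ by a product-rule expansion in which $\bfh$ keeps its full $W^{4,2}$ regularity. Your $\mathcal S_\tau$-splitting instead produces $\tau\mathcal S_\tau(f'(\bfv)\bfw)$, so the product-rule expansion hits $\bfw$, and you must feed in the bound $\Vert\bfw\Vert_{W^{2,2}_x}\leq c(1+\tau\Vert\bfg\Vert_{W^{2,2}_x}^8)\Vert\bfh\Vert_{W^{2,2}_x}$ from \eqref{eq:2704C}; this stacks with the powers of $\Vert\bfv\Vert_{W^{2,2}_x}$ coming from $f'(\bfv)$ and pushes the final exponent above~$8$. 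The result is still correct (only the exponent changes, which is harmless for the applications), but the paper's route sidesteps this.

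For part~(c) your derivation of the equation for $\bfw_2=D^2\mathcal T_\tau(\bfg)(\bfh_1,\bfh_2)$ is correct, but ``applying $\mathcal S_\tau$'' does not immediately give the bound because the term $\tau\mathcal S_\tau(f'(\bfv)\bfw_2)$ remains on the right-hand side. The clean observation is that the equation for $\bfw_2$ is precisely the linearised equation with right-hand side $-\tau f''(\bfv)(D\mathcal T_\tau(\bfg)\bfh_1)(D\mathcal T_\tau(\bfg)\bfh_2)$, i.e.\ $\bfw_2=D\mathcal T_\tau(\bfg)\bigl[-\tau f''(\bfv)(D\mathcal T_\tau(\bfg)\bfh_1)(D\mathcal T_\tau(\bfg)\bfh_2)\bigr]$; this is exactly the second-derivative formula the paper writes down, and then \eqref{eq:2704} (with $p=2$ outside and $p=6$ inside) plus \eqref{est:TL2} give the claim. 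Your parenthetical ``or directly by \eqref{eq:2704}\dots since $D\mathcal T_\tau$ is a contraction on $L^6_x$'' is the right fix and matches the paper.
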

\begin{proof}
 We rewrite formula \eqref{???} as
\begin{align*}
D\mathcal T_\tau(\bfg)&=\id+\big(D\mathcal T^{-1}_\tau(\mathcal T_\tau(\bfg))\big) ^{-1}-\id\\
&=\id+\tau\big(\id+\tau\A+\tau f'(\mathcal T_\tau(\bfg))\big)^{-1}(\A+f'(\mathcal T_\tau(\bfg))\\
&=\id+\tau D\mathcal T_\tau(\bfg)(\A+f'(\mathcal T_\tau(\bfg))).
\end{align*}
Now we estimate the term on the right-hand side by means of Lemma \ref{lem:1909}.
 
Ad~{\bf (a)}. By \eqref{eq:2704} it follows that
\begin{align*}
\|(D\mathcal T_\tau(\bfg)-\id)\bfh\|_{L^2_x}&\leq\,c\tau\big(\|\bfh\|_{W^{2,2}_x}+\|f'(\mathcal T_\tau(\bfg))\|_{L^{3}_x}\|\bfh\|_{L^6_x}\big)\\
&\leq\,c\tau\big(\|\bfh\|_{W^{2,2}_x}+\Big(1+\|\mathcal T_\tau(\bfg)\|_{L^{6}_x}^{2}\Big)\|\bfh\|_{W^{1,2}_x}\big)\\
&\leq\,c\tau\|\bfh\|_{W^{2,2}_x}\Big(1+\|\bfg\|_{L^{6}_x}^{2}\Big).
\end{align*}
Ad~{\bf (b)}. Similarly to {\bf (a)}, \eqref{eq:2704B} yields
\begin{align}\label{eq:T-idW12}
\begin{aligned}
&\|(D\mathcal T_\tau(\bfg)-\id)\bfh\|_{W^{1,2}_x}\\
&\leq\,c\tau(1+\|\bfg\|_{W^{1,2}}^4)\big(\|\bfh\|_{W^{3,2}_x}+\|f''(\mathcal T_\tau(\bfg))\nabla (\mathcal T_\tau(\bfg))\bfh\|_{L^{2}_x}+\|f'(\mathcal T_\tau(\bfg))\nabla\bfh\|_{L^{2}_x}\big)\\
&\leq\,c\tau(1+\|\bfg\|_{W^{1,2}_x}^4)\big(\|\bfh\|_{W^{3,2}_x}+\|f''(\mathcal T_\tau(\bfg))\|_{L^6_x}\|\nabla(\mathcal T_\tau(\bfg))\|_{L^6_x}\|\bfh\|_{L^{6}_x}+\|f'(\mathcal T_\tau(\bfg))\|_{L^3_x}\|\nabla\bfh\|_{L^{6}_x}\big)\\
&\leq\,c\tau(1+\|\bfg\|_{W^{1,2}_x}^4)\Big(\|\bfh\|_{W^{3,2}_x}+\Big(1+\|\bfg\|_{L^{6}_x}\Big)\|\bfg\|_{W^{2,2}_x}\|\bfh\|_{W^{1,2}_x}+\Big(1+\|\bfg\|^{2}_{L^{6}_x}\Big)\|\bfh\|_{W^{2,2}_x}\Big)\\
&\leq\,c\tau(1+\|\bfg\|_{W^{1,2}_x}^4)\|\bfh\|_{W^{3,2}_x}+c\tau\Big(1+\|\bfg\|_{W^{2,2}_x}^{2}\Big)\|\bfh\|_{W^{2,2}_x}
\end{aligned}
\end{align}
using also \eqref{est:TL2} and \eqref{est:TW12}. Employing also
\eqref{eq:2704B} and \eqref{est:TW12} an analogous chain
gives
\begin{align*}
&\|(D\mathcal T_\tau(\bfg)-\id)\bfh\|_{W^{2,2}_x}\\
&\leq\,c\tau\Big(1+\tau \|\bfg\|_{W^{2,2}_x}^{8}\Big)\Big(\|\bfh\|_{W^{4,2}_x}+\|f''(\mathcal T_\tau(\bfg))\nabla\mathcal T_\tau(\bfg)\nabla\bfh\|_{L^{2}_x}\Big)\\
&\quad+c\tau\Big(1+\tau \|\bfg\|_{W^{2,2}_x}^{8}\Big)\|f'(\mathcal T_\tau(\bfg))\nabla^2\bfh\|_{L^{2}_x}+\|f''(\mathcal T_\tau(\bfg))\nabla^2\mathcal T_\tau(\bfg)\bfh\|_{L^{2}_x}\big)\\
&\quad+c\tau\Big(1+\tau \|\bfg\|_{W^{2,2}_x}^{8}\Big)\|f'''(\mathcal T_\tau(\bfg))\nabla\mathcal T_\tau(\bfg)\nabla\mathcal T_\tau(\bfg)\bfh\|_{L^{2}_x}\\
&\leq\,c\tau\Big(1+\|\bfg\|_{W^{2,2}_x}^{8}\Big)\big(\|\bfh\|_{W^{4,2}_x}+\|f'(\mathcal T_\tau(\bfg))\|_{L^{\infty}_x}\|\bfh\|_{W^{2,2}_x}\big)\\
&\quad+c\tau\Big(1+\|\bfg\|_{W^{2,2}_x}^{8}\Big)\big(\|f''(\mathcal T_\tau(\bfg))\|_{L^6_x}\|\nabla\mathcal T_\tau(\bfg)\|_{L^6_x}\|\nabla\bfh\|_{L^{6}_x}\big)\\
&\quad+c\tau\Big(1+\|\bfg\|_{W^{2,2}_x}^{8}\Big)\|f''(\mathcal T_\tau(\bfg))\|_{L^\infty_x}\|\nabla^2\mathcal T_\tau(\bfg)\|_{L^2_x}\|\bfh\|_{L^{\infty}_x}\\
&\quad+c\tau\Big(1+\|\bfg\|_{W^{2,2}_x}^{8}\Big)\|f'''(\mathcal T_\tau(\bfg))\|_{L^\infty_x}\|\nabla\mathcal T_\tau(\bfg)\|_{L^6_x}^2\|\bfh\|_{L^{6}_x}\\
&\leq\,c\tau\Big(1+\|\bfg\|_{W^{2,2}_x}^{8}\Big)\Big(\|\bfh\|_{W^{4,2}_x}+\Big(1+\|\mathcal T_\tau(\bfg)\|^{2}_{L^{\infty}_x}\Big)\|\bfh\|_{W^{2,2}_x}\Big)\\
&\quad+c\tau\Big(1+\|\bfg\|_{W^{2,2}_x}^{8}\Big)\Big(1+\|\mathcal T_\tau(\bfg)\|_{L^{6}_x}\|\nabla\mathcal T_\tau(\bfg)\|_{L^6_x}\|\nabla\bfh\|_{L^{6}_x}\Big)\\
&\quad+c\tau\Big(1+\|\bfg\|_{W^{2,2}_x}^{8}\Big)\Big(1+\|\mathcal T_\tau(\bfg))\|_{L^\infty_x}\Big)\|\nabla^2\mathcal T_\tau(\bfg)\|_{L^2_x}\|\bfh\|_{L^{\infty}_x}\\
&\quad+c\tau\Big(1+\|\bfg\|_{W^{2,2}_x}^{8}\Big)\|\nabla\mathcal T_\tau(\bfg)\|_{L^6_x}^2\|\bfh\|_{L^{6}_x}\\
&\leq\,c\tau\Big(1+\|\bfg\|_{W^{2,2}_x}^{8}\Big)\Big(\|\bfh\|_{W^{4,2}_x}+\Big(1+\|\mathcal T_\tau(\bfg)\|^{2}_{W^{2,2}_x}\Big)\|\bfh\|_{W^{2,2}_x}\Big).
\end{align*}
Using \eqref{est:TW22} we can finish the proof.

Ad~{\bf (c)}. Now we turn to the second derivative which can be written as
\begin{align*}
D^2\mathcal T_\tau(\bfg)&=-\textcolor{blue}{\tau}\big(D\mathcal T_\tau^{-1}(\mathcal T_\tau(\bfg))\big)^{-2}D^2\mathcal T_\tau^{-1}(\mathcal T_\tau(\bfg)) \big(D\mathcal T_\tau^{-1}(\mathcal T_\tau(\bfg))\big)^{-1}\\
&=-\textcolor{blue}{\tau}(D\mathcal T_\tau(\bfg))^2f''(\mathcal T_\tau(\bfg))D\mathcal T_\tau(\bfg).
\end{align*}
Using \eqref{eq:2704} and \eqref{est:TL2} with $p=6$ yields
\begin{align*}
\|D^2\mathcal T_\tau(\bfg)(\bfh_1,\bfh_2)\|_{L^2_x}&\leq \,c\tau\|f''(\mathcal T_\tau(\bfg))\|_{L^6_x}\|\bfh_1\|_{L^6_x}\|\bfh_2\|_{L^6_x}\\
&\leq \,c\tau\Big(1+\|\mathcal T_\tau(\bfg)\|_{L^{6}_x}\Big)\|\bfh_1\|_{L^6_x}\|\bfh_2\|_{L^6_x}\\
&\leq \,c\tau\Big(1+\|\bfg\|_{L^{6}_x}\Big)\|\bfh_1\|_{W^{1,2}_x}\|\bfh_2\|_{W^{1,2}_x}
\end{align*}
for all $\bfh_1,\bfh_2\in W^{1,2}_x$. Hence
the claim follows.
\end{proof}

\subsection{Semi-discretisation in time}
\label{sec:semi}
We consider an equidistant partition of $[0,T]$ with mesh size $\tau=T/M$ and set $t_m=m\Delta t$. 
Let $\bfu_0$ be an $\mathfrak F_0$-measurable random variable with values in $W^{1,2}(\mt)$. We aim at constructing iteratively a sequence of $\mathfrak F_{t_m}$-measurable random variables $\bfu_{m}$ with values in $W^{1,2}(\mt)$ such that
for every $\bfvarphi\in W^{1,2}(\mt)$ it holds true $\p$-a.s.
\begin{align}\label{tdiscrA}
\begin{aligned}
\int_{\mt}&\bfu_{m}\cdot\bfvarphi \dx +\tau\int_{\mt}f(\bfu_{m})\cdot\bfvarphi\dx+\tau\int_{\mt}\nabla\bfu_{m}:\nabla\bfvarphi\dx\\
&\qquad=\int_{\mt}\bfu_{m-1}\cdot\bfvarphi \dx+\int_{\mt}\Phi(\bfu_{m-1})\,\Delta_mW\cdot \bfvarphi\dx,
\end{aligned}
\end{align}
where $\Delta_m W=W(t_m)-W(t_{m-1})$. The existence of a unique $\bfu^m$ (given $\bfu_{m-1}$ and $\Delta_m W$) solving \eqref{tdiscrA} follows from its
re-interpretation as a convex minimisation problem. Moreover, the {\em discrete energy estimate} 
%
%
%
\begin{equation} \label{lem:3.1A}
\E\bigg[\max_{1\leq m\leq M} {\mathcal E}(u^m)+\tau\sum_{m=1}^M \|\mathcal E(u_m)\|_{L^2_x}^2 \bigg]\leq
cT
\end{equation}
holds under the assumptions made in Lemma \ref{lemma:3.1}. In fact, we will study the stability of $\bfu_m$ in detail in Section \ref{sec:esttau} and derive more general (and higher order) estimates in Lemma \ref{lemma:3.1}.

For the weak error analysis in Section \ref{sec:error} it will turn out to be useful to write \eqref{tdiscrA} as
\begin{align}\label{tdiscr'}
\bfu_m=\mathcal T_{\tau}\big(\bfu_{m-1}+\Phi(\bfu_{m-1})\Delta_m W\big),
\end{align}
where $\mathcal T_\tau$ is the discrete nonlinear semigroup corresponding to $D\mathcal E$, which we analysed in Section \ref{sec:Ttau} above. Note that different to previous works on
weak error analysis, \emph{cf.} \cite{BrDe,BrGo,De}, we treat the nonlinearity implicitly. Hence it is more complicated to define a time-continuous interpolant which coincides with $\bfu_m$ in $t_m$ and is still progressively measurable. Note, however, that $\mathcal T_\tau$ features nice properties similar to those that we have seen in the previous subsection.

 Setting
$$\quad \bfU_\tau(t)=\frac{1}{\tau}\int_{t_{m-1}}^t\bfu_{m-1}\ds+\int_{t_{m-1}}^t\Phi(\bfu_{m-1})\,\dd W$$
we introduce the $(\mathfrak F_t)$-adapted process
\begin{align}\label{eq:utau}
\begin{aligned}
\bfu_\tau(t)&=\frac{t_{m}-t}{\tau}\bfu_{m-1}+\mathcal T_\tau\bigg(\frac{1}{\tau}\int_{t_{m-1}}^t\bfu_{m-1}\ds+\int_{t_{m-1}}^t\Phi(\bfu_{m-1})\,\dd W\bigg)\\&=\frac{t_{m}-t}{\tau}\bfu_{m-1}+\mathcal T_\tau(\bfU_\tau(t))\end{aligned}
\end{align}
for $t\in[t_{m-1},t_m]$,
which coincides with $\bfu_{m-1}$ in $t_{m-1}$ and with $\bfu_m$ in $t_m$. In the following we linearise this formula around $U_\tau$, which gives a part which is (given $U_\tau$) linear in $u_{m-1}$ (similar to the method from \cite{BrDe,BrGo,De}) plus an error term. The latter will turn our to be globally of order $\tau$ as required.  
Applying It\^{o}'s formula to the second term in \eqref{eq:utau} yields
\begin{align}
\label{tdiscr'}
\begin{aligned}
\bfu_\tau(t)&=\frac{t_{m}-t}{\tau}\bfu_{m-1}+\frac{1}{\tau}\int_{t_{m-1}}^t D\mathcal T_\tau\big(\bfU_\tau(s)\big)\bfu_{m-1}\ds+\int_{t_{m-1}}^t D\mathcal T_\tau\big(\bfU_\tau\big)\Phi(\bfu_{m-1})\,\dd W\\
&\quad+\frac{1}{2}\sum_{k\geq 1}\int_{t_{m-1}}^t D^2\mathcal T_\tau\big(\bfU_\tau(s)\big)\big(\Phi(\bfu_{m-1})e_k,\Phi(\bfu_{m-1})e_k\big)\ds\\
&=\bfu_{m-1}+\frac{1}{\tau}\int_{t_{m-1}}^t \Big(D\mathcal T_\tau\big(\bfU_\tau(s)\big)-\id\Big)\bfu_{m-1}\ds\\&\quad+\int_{t_{m-1}}^t D\mathcal T_\tau\big(\bfU_\tau(s)\big)\Phi(\bfu_{m-1})\,\dd W\\
&\quad+\frac{1}{2}\sum_{k\geq 1}\int_{t_{m-1}}^t D^2\mathcal T_\tau\big(\bfU_\tau(s)\big)\big(\Phi(\bfu_{m-1})e_k,\Phi(\bfu_{m-1})e_k\big)\ds\quad\text{for}\quad t\in[t_{m-1},t_m].
\end{aligned}
\end{align}
%

Finally, we derive some uniform estimates for $U_\tau$ in terms of $u_m$. 
By the definition of $\bfU_\tau$, the Burkholder-Davis-Gundy inequality, and estimates \eqref{est:TL2} and \eqref{eq:phi}, we have for all $q\geq2$
\begin{align*}
\E\bigl[\|\bfU_\tau(t)\|^q_{L^2_x}\bigr]&\leq\,c\,\E[\|\bfu_{m-1}\|^q_{L^2_x}]+c\,\E\bigg[\sup_{s\in[t_{m-1},t]}\bigg\|\int_{t_{m-1}}^s\Phi(\bfu_{m-1})\,\dd W\bigg\|_{L^2_x}^q\bigg]\\&\leq\,c\,\E[\|\bfu_{m-1}\|^q_{L^2_x}]+c\,\E\biggl[\int_{t_{m-1}}^{t_m}\|\Phi(\bfu_{m-1})\|^q_{L_2(\mathfrak U;L^2_x)}\,\dif s\biggr]\\
&\leq\,c\,\E[1+\|\bfu_{m-1}\|^q_{L^2_x}]
\end{align*}
for $t\in[t_{m-1},t_m)$.
A similar argument applies when we replace $L^2(\mt)$ by $W^{1,2}(\mt)$
or $W^{2,2}(\mt)$ using this time \eqref{eq:S1} combined with \eqref{eq:phi} and \eqref{est:TW12} or \eqref{eq:S1}, \eqref{eq:phi2} and \eqref{est:TW22} respectively. We conclude
\begin{align}\label{eq:1102b}
\E \bigl[\|\bfU_\tau(t)\|^q_{W^{k,2}_x}\bigr]\leq\,c(q)\,\E\Bigl[1+\|\bfu_{m-1}\|^q_{W^{k,2}_x}\Bigr]
\end{align} 
uniformly in $\tau$ for $q\geq2$, $t\in[t_{m-1},t_m]$ and $k=0,1,2$. By the estimates \eqref{est:TL2} and \eqref{est:TW12} in Lemma \ref{lem:1909}, formula \eqref{eq:utau} thus yields
\begin{align}\label{eq:1102c}
\E \bigl[\|\bfu_\tau(t)\|^q_{W^{k,2}_x}\bigr]\leq\,c(q)\,\E\Bigl[1+\|\bfu_{m}\|^q_{W^{k,2}_x}\Bigr]
\end{align} 
uniformly in $\tau$ for $q\geq2$, $t\in[t_{m-1},t_m]$ and $k=0,1$. Similarly, \eqref{est:TW22} in Lemma \ref{lem:1909} implies
\begin{align}\label{eq:1102d}
\E \bigl[\|\bfu_\tau(t)\|^q_{W^{2,2}_x}\bigr]\leq\,c(q)\,\E\Bigl[1+\|\bfu_{m}\|^{3q}_{W^{2,2}_x}\Bigr].
\end{align} 
Note that controlling the right-hand sides of \eqref{eq:1102b}--\eqref{eq:1102d} is not straightforward and will be done in the next subsection.

\subsection{Estimates for the time-discrete solution}
\label{sec:esttau}
We now derive some uniform estimates for the solution of the time-discrete problem \eqref{tdiscrA}.
These estimates involve the energy ${\mathcal E}(\cdot)$ from (\ref{ac-1}) as relevant Liapunov functional, reflecting that the discrete problem \eqref{tdiscr'} inherits the relevant gradient flow property of the original problem (\ref{eq:SNS}).
\begin{lemma}\label{lemma:3.1} Let $(\Omega,\mf,(\mf_t)_{t\geq0},\prst)$ be a given stochastic basis with a complete right-con\-ti\-nuous filtration and an $(\mf_t)$-cylindrical Wiener process $W$. Let $T\equiv t_M >0$, assume  that $\mathcal E(\bfu_0)\in L^{2^q}(\Omega)$ for some  $q \in {\mathbb N}_0$. 
Choose $\tau \leq \frac{1}{4}$. The iterates $(\bfu_m)_{m=1}^M$ from \eqref{tdiscrA}
satisfy the following estimates.\begin{enumerate}
\item Suppose that either \ref{N1} {\bf (a)} holds and that $\Phi$ is bounded and summable in the sense that $\sup_ {x,\xi}|\Phi(x,\xi)e_k|\leq\,\mu_k$ with $\sum_{k\geq1}\mu_k<\infty$ or that assumption \ref{N2} is in place.
For all $q\in\N_0$ there exists $c=c(q,T,u_0)$, such that
\begin{align}
\label{lem:3.1a}\E\bigg[\max_{0\leq m\leq M} \bigl[{\mathcal E}(\bfu_m)\bigr]^{2^q} +
\tau\sum_{m=1}^M {\bigl[{\mathcal E}(\bfu_m)\bigr]^{2^q-1}}\|D{\mathcal E}(\bfu_m)\Vert^2_{L^2_x} \bigg]
&\leq\,c \, .
\end{align}
\item Assume that $\bfu_0\in L^{2^q}(\Omega,W^{2,2}(\mt))$,  $\mathcal E(u_0)\in L^{2^{q+2}}(\Omega)$ for some $q\in\N_0$ and that $\Phi$ satisfies \ref{N1}  if $d=1,2$, and \ref{N2} if $d=3$. Then we have
\begin{align}
\label{lem:3.1c}\E\bigg[\max_{1\leq m\leq M}\|\bfu_m\|^{2^q}_{W^{2,2}_x}+\tau\sum_{m=1}^M {\|\bfu_m\|_{W^{2,2}_x}^{2^q-2}}\|\nabla^3\bfu_m\|^2_{L^2_x}\bigg]&\leq\,c\, .
\end{align}
\item Assume that $\bfu_0\in L^{2^q}(\Omega,W^{3,2}(\mt))\cap L^{2^{q+2}}(\Omega,W^{2,2}(\mt))$, $\mathcal E(u_0)\in L^{2^{q+4}}(\Omega)$ for some $q\in\N$ and that $\Phi$ satisfies \ref{N1}   if $d=1,2$, and \ref{N2} if $d=3$. Then we have
\begin{align}
\label{lem:3.1d}\E\bigg[\max_{1\leq m\leq M}\|\bfu_m\|^{2^q}_{W^{3,2}_x}+\tau\sum_{m=1}^M {\|\bfu_m\|_{W^{3,2}_x}^{2^q-2}}\|\nabla^4\bfu_m\|^2_{L^2_x}\bigg]&\leq\,c\, .
\end{align}
\end{enumerate}
Here $c=c(q,T,\bfu_0)>0$ is independent of $\tau$.
\end{lemma}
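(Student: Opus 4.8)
The plan is to prove the three estimates successively by induction on $q\in\mathbb N_0$, using the fully implicit structure of \eqref{tdiscrA} (equivalently \eqref{tdiscr'}) to absorb the strongly dissipative terms, and mimicking on the discrete level the It\^o-based arguments already carried out in Lemma \ref{lemma:3.1A} and Lemma \ref{lemma:3.1B}. The main tool throughout is the \emph{discrete It\^o formula}: for a smooth convex functional $\mathcal G$ and iterates satisfying $\bfu_m=\mathcal T_\tau(\bfu_{m-1}+\Phi(\bfu_{m-1})\Delta_m W)$, one writes $\mathcal G(\bfu_m)-\mathcal G(\bfu_{m-1})=\langle D\mathcal G(\bfu_m),\bfu_m-\bfu_{m-1}\rangle - R_m$ where $R_m\geq 0$ collects the convexity defect, then inserts the equation $\bfu_m-\bfu_{m-1}=-\tau D\mathcal E(\bfu_m)+\Phi(\bfu_{m-1})\Delta_m W$. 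The implicitness is crucial: testing the equation for $\bfu_m$ \emph{with a function of $\bfu_m$} makes the leading elliptic and cubic terms land on the correct side with the good sign, exactly as in \eqref{ac-3d}, whereas the noise increment $\Phi(\bfu_{m-1})\Delta_m W$ is $\mathfrak F_{t_{m-1}}$-conditionally centred, so after taking expectations the martingale part vanishes and only the (Gaussian) second moment $\tau\,\mathbb E[\|\Phi(\bfu_{m-1})\|^2_{L_2(\mathfrak U;\mathbb H)}]$ survives — this is the discrete analogue of the It\^o correction. One must keep the additional quadratic discrete-derivative term $\tfrac{\tau}{2}\|d_t\bfu_m\|^2$ that appears from $\langle\bfu_m-\bfu_{m-1},\bfu_m\rangle=\tfrac12\|\bfu_m\|^2-\tfrac12\|\bfu_{m-1}\|^2+\tfrac12\|\bfu_m-\bfu_{m-1}\|^2$; it has a favourable sign and is simply discarded (or, where needed, used to absorb boundary contributions as in \eqref{pert3}).

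For part (a) I would apply this scheme with $\mathcal G=\mathcal E^{2^q}$: the base case $q=0$ is the energy inequality \eqref{ac-3d} plus the It\^o correction controlled by \eqref{eq:phi} (or the boundedness/summability hypothesis, which keeps $\sum_k\|\Phi(\cdot)e_k\|^2_{L^\infty_x}$ finite), handled exactly as the term $(\mathrm{II})$ in the proof of Lemma \ref{lemma:3.1A}; then one raises to the power $2^q$ by the chain rule, the BDG inequality is not needed for expectations but \emph{is} needed for the $\max_m$-in-expectation bound, and a discrete Gronwall lemma closes the induction, the $q\to q+1$ step feeding on the already-established $2^q$-moment. For parts (b) and (c) I would differentiate \eqref{tdiscrA} once, resp.\ twice, in space — rigorously via difference quotients, as flagged in the footnote to Lemma \ref{lem:T} — obtaining discrete analogues of \eqref{eq:DSNS} and \eqref{eq:D2SNS}, and then run the same $\mathcal G=\|\nabla^k\cdot\|_{L^2_x}^{2^q}$ argument; the one-sided Lipschitz term $f'(\bfu_m)|\nabla^k\bfu_m|^2\geq -|\nabla^k\bfu_m|^2$ is again the point that lets us avoid truncation, while for $k=2$ the correction term $f''(\bfu_m)\nabla\bfu_m\otimes\nabla\bfu_m$ is estimated by interpolation/Sobolev exactly as in the proof of \eqref{est:TW22} and fed back via the lower-order estimates \eqref{lem:3.1a}, \eqref{lem:3.1c} and the hypotheses on $\mathbb E[\mathcal E(\bfu_0)^{2^{q+2}}]$, resp.\ $2^{q+4}$, which give the extra room needed for the nonlinear feedback. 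The noise terms for $k=1,2$ are controlled by \eqref{eq:phi1b}, \eqref{eq:phi2b} under \ref{N1}; the reason assumption \ref{N2} is imposed for $d=3$ in (b) and (c) is that in three dimensions the term $D^2_\xi\Phi(\bfu)\,\partial_{\gamma_1}u\,\partial_{\gamma_2}u$ — which vanishes for affine-linear noise — cannot be absorbed by the available $W^{2,2}$-bounds (the critical Sobolev exponents are too tight), precisely as in the discussion following \eqref{eq:D2SNS} in the proof of Lemma \ref{lemma:3.1B}(c).

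The main obstacle I expect is not any single estimate but the \emph{bookkeeping of the induction on $q$ together with the hierarchy of spatial regularities}: each step needs the previous moment order and the lower spatial-derivative estimate with a \emph{higher} power, so one must set up the induction so that \eqref{lem:3.1a} is proved for all $q$ first, then \eqref{lem:3.1c} for all $q$ using \eqref{lem:3.1a}, then \eqref{lem:3.1d} using both — and verify at each stage that the initial-data hypotheses $\mathcal E(\bfu_0)\in L^{2^{q+\cdot}}$, $\bfu_0\in L^{2^{q+\cdot}}(\Omega;W^{k,2})$ are exactly what the nonlinear and noise feedback terms consume. A secondary technical point is justifying the discrete It\^o formula and the spatial difference-quotient differentiation for the implicit scheme with the non-Lipschitz cubic $f$; this is done by first truncating $f$ (or $F$) to get classical iterates, deriving the estimates with $\tau$- and truncation-independent constants, and passing to the limit — the one-sided Lipschitz bound ensuring the constants do not blow up, so no Gronwall factor $\exp(T/\varepsilon)$-type degradation occurs (cf.\ Remark \ref{others1}, item \textbf{1.}).
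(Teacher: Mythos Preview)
Your inductive architecture, the treatment of the dissipative terms via implicit testing, and the identification of why \ref{N2} enters for $d=3$ in parts (b)--(c) are all correct; the paper proceeds the same way (testing with $\Delta^k u_m$ rather than differentiating and testing with $|\nabla^k u_m|^{p-2}\nabla^k u_m$, which amounts to the same thing). But there is a real gap in part (a) concerning the stochastic term. You assert that after taking expectations the pairing $\langle D\mathcal G(u_m),\Phi(u_{m-1})\Delta_mW\rangle$ reduces to an It\^o-type correction $\tau\,\E[\|\Phi(u_{m-1})\|^2_{L_2}]$. This heuristic would be fine if $D\mathcal G$ were evaluated at $u_{m-1}$, but precisely because you test at the \emph{implicit} point $u_m$ --- which depends on $\Delta_mW$ --- the pairing is \emph{not} $\mathfrak F_{t_{m-1}}$-conditionally centred, and no It\^o correction falls out automatically. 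The paper splits
\[
\bigl(\Phi(u_{m-1})\Delta_mW,\,D\mathcal E(u_m)\bigr)=\bigl(\Phi(u_{m-1})\Delta_mW,\,D\mathcal E(u_{m-1})\bigr)+\bigl(\Phi(u_{m-1})\Delta_mW,\,D\mathcal E(u_m)-D\mathcal E(u_{m-1})\bigr),
\]
where only the first summand is a discrete martingale. The second contains the cubic cross term $\bigl(\Phi(u_{m-1})\Delta_mW,\,f(u_m)-f(u_{m-1})\bigr)$, and it is \emph{this} term --- not any It\^o correction --- that forces the special hypotheses on $\Phi$ in (a). Using $f(b)-f(a)=(b^2-a^2)b+(a^2-1)(b-a)$, the paper controls the $(a^2-1)(b-a)$ piece either via the boundedness/summability of $\Phi$ (so that BDG yields a factor $\|\,|u_{m-1}|^2-1\|_{L^2_x}\|u_m-u_{m-1}\|_{L^2_x}$) or via the affine structure \ref{N2} (regrouping $u_{m-1}(\vert u_{m-1}\vert^2-1)$ back into $D\mathcal E(u_{m-1})$); the $(b^2-a^2)b$ piece needs a separate $L^4_x$ moment bound on $\int\Phi\,\dd W$ obtained from stochastic integration in UMD spaces.

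A second ingredient your outline omits: after this decomposition the right-hand side carries $\sum_m\|u_m-u_{m-1}\|^2_{L^2_x}$, which is not absorbed by anything in your scheme. The paper bounds it by a separate preliminary test of \eqref{tdiscrA} with $\varphi=u_m-u_{m-1}$ (their Step~II), which for $\tau\leq\tfrac14$ gives $\E\bigl[\sum_m\|u_m-u_{m-1}\|_{L^2_x}^2\bigr]\leq c$ independently of the energy estimate. Without both the correct splitting of the stochastic term and this auxiliary increment bound, the argument for \eqref{lem:3.1a} does not close even at $q=0$.
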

\begin{remark}
Part {\bf (a)} is an {\em energy estimate} for the solution $(\bfu_m)_{m=1}^M$ from \eqref{tdiscrA}; in the context of phase-field models where the mesh width $\varepsilon >0$ enters, it avoids exponential growth with respect to $\varepsilon^{-1}$ in time -- as its continuous counterpart.  
Its derivation below is close to \cite[Lemma 3.2]{AnBaNuPr}, but here generalizes  to
 noise of type \ref{N2}. The remaining parts {\bf (b)} and {\bf (c)}
give {\em higher norm estimates}.
\end{remark}
\begin{proof}[Proof of Lemma \ref{lemma:3.1}.]
Ad~{\bf (a)}. Choose $q=0$ first. We proceed in several steps:  {\bf I.} derives a first energy estimate, which creates a new term, which is then bounded independently in {\bf II.}. To extend the estimate {\bf (a)} to $q \geq 1$ then follows from a general
inductive argument as in \cite[Lemma 3.2]{AnBaNuPr}.

{\bf I.} (Formally) choose $\varphi = D {\mathcal E}(u_m)$ in (\ref{tdiscrA}) and integrate to get
\begin{equation}\label{eins-1}
\bigl( D {\mathcal E}(u_m), u_m - u_{m-1}\bigr)_{L^2_x} + \tau \Vert D{\mathcal E}(u_m)\Vert^2_{L^2_x} = \bigl(\Phi(u_{m-1})\Delta_mW, D{\mathcal E}(u_m) \bigr)_{L^2_x} ,
\end{equation}
which we write as 
\begin{align*}
{\tt I}^m+\tau\Vert D{\mathcal E}(u_m)\Vert^2_{L^2_x} ={\tt II}^m,
\end{align*}
where ${\tt  I}^m={\tt I}^m_{\tt A}+{\tt I}^m_{\tt B}$ with 
\begin{align*}
{\tt I}^m_{\tt A}& := \bigl( \nabla u_m, \nabla(u_m - u_{m-1})\bigr)_{L^2_x}\\& =
\frac{1}{2} \Bigl( \Vert \nabla u_m\Vert^2_{L^2_x} - \Vert \nabla u_{m-1}\Vert^2_{L^2_x}+ \Vert \nabla (u_m-u_{m-1})\Vert^2_{L^2_x}\Bigr)_{L^2_x}\,, \\
{\tt I}^m_{\tt B} &:=   \bigl( f(u_m), u_m - u_{m-1}\bigr)_{L^2_x} \, .
\end{align*}
The following estimate holds (see {\em e.g.} \cite[estimate (35)]{FePr})
\begin{equation}\label{tfe1} {\tt I}^m_{\tt B}  \geq    \Bigl(F(u_m) - F(u_{m-1}) \Bigr) +
\frac{1}{4} \Bigl\Vert   \vert u_m\vert^2 - \vert u_{m-1}\vert^2 \Bigr\Vert^2_{L^2_x} - 
\frac{1}{2} \bigl\Vert u_m - u_{m-1}\bigr\Vert^2_{L^2_x}\, ,
\end{equation}
whose short proof is recalled here for the reader's convenience, and which is based on repeated use of binomial formulae: since $$f(u_m) =\bigl( \vert u_m\vert^2-1\bigr) u_m = \frac{1}{2} \bigl( \vert u_m\vert^2-1\bigr) \bigl( [u_m+u_{m-1}] + [u_m - u_{m-1}]\bigr)\,, $$
we have that
\begin{align*}
\bigl( f(u_m), u_m-u_{m-1}\bigr)_{L^2_x} &= \frac{1}{2} \bigl( \vert u_m\vert^2-1, \bigl[\vert u_m\vert^2-1\bigr] - \bigl[\vert u_{m-1}\vert^2-1\bigr]\bigr)_{L^2_x} \\
&+ \frac{1}{2} \bigl( \vert u_m\vert^2-1, \vert u_m\  -   u_{m-1}\vert^2\bigr)_{L^2_x} \\
&= \frac{1}{4} \Bigl(\bigl\Vert \vert u_m\vert^2-1\bigr\Vert_{L^2_x}^2 - \bigl\Vert \vert u_{m-1}\vert^2-1\bigr\Vert_{L^2_x}^2 + \bigl\Vert \vert u_m\vert^2 - \vert u_{m-1}\vert^2\bigr\Vert_{L^2_x}^2\Bigr) \\
&+ \frac{1}{2}\Vert u_m (u_m-u_{m-1})\Vert^2_{L^2_x} - \frac{1}{2}\Vert u_m - u_{m-1}\Vert^2_{L^2_x}\\
&\geq F(u_m) - F(u_{m-1}) + \frac{1}{4} \bigl\Vert \vert u_m\vert^2 - \vert u_{m-1}\vert^2\bigr\Vert^2_{L^2_x} - \frac{1}{2}\Vert u_m - u_{m-1}\Vert^2_{L^2_x}\,. 
\end{align*}
By coming back to (\ref{eins-1}), we then obtain after summation
\begin{align}\nonumber
\mathcal E ( u^m) &+ \frac{1}{} \sum_{n=1}^m\Big(\Vert \nabla (u_n-u_{n-1})\Vert^2_{L^2_x} +
 \frac{1}{2} \Bigl\Vert   \vert u_n\vert^2 - \vert u_{n-1}\vert^2 \Bigr\Vert^2_{L^2_x}\Big)+\sum_{n=1}^m\tau\Vert D{\mathcal E}(u_n)\Vert^2_{L^2_x} \\ \label{bas1}
&\leq \mathcal E(u_0)+ \frac{1}{2}\sum_{n=1}^m\bigl\Vert u_n - u_{n-1}\bigr\Vert^2_{L^2_x}+\sum_{n=1}^m {\tt II}^n.
\end{align}
To proceed we must control the stochastic term $\mathcal N_m:=\sum_{n=1}^m {\tt II}^n$, where
\begin{align*} 
 {\tt II}^n   &:=  \bigl(\Phi(u_{n-1})\Delta_nW, D{\mathcal E}(u_{n})\bigr)_{L^2_x}.
\end{align*}
It follows from the definition of $ {\mathcal E}(u^n)$ that
\begin{align*} 
 {\tt II}^n   
&=   -\bigl( D\Phi(u_{n-1})   \nabla u_{n-1}\Delta_nW,  \nabla (u_n- u_{n-1})  \bigr)_{L^2_x} \\
&\qquad   + \bigl( \Phi(u_{n-1})\Delta_n W , f(u_n) - f(u_{n-1})\bigr)_{L^2_x}   +\bigl(\Phi(u_{n-1})\Delta_nW, D{\mathcal E}(u_{n-1})\bigr)_{L^2_x}\\
&=: {\tt II}^n_{\tt A}+{\tt II}^n_{\tt B} +{\tt II}^n_{\tt C}.
\end{align*}
Corresponding terms are $\mathcal N_m^{\tt A}$, $\mathcal N_m^{\tt B}$,  and  $\mathcal N_m^{\tt C}$, which we now bound independently. 

{\bf ${\bf I}_{\bf 1}$)} We have by Young's inequality and It\^{o}-isometry for $\kappa>0$ arbitrary
\begin{align*}
\E\bigg[\max_{1\leq m\leq M}\mathcal N_m^{\tt A}\bigg]&\leq\,\kappa\, \E\bigg[\sum_{n=1}^M\|\nabla (u_n- u_{n-1})  \|_{L^2_x}^2\bigg]\\
&\quad+c(\kappa)\, \E\bigg[\sum_{n=1}^M\bigg\|\int_{t_{n-1}}^{t_n}D\Phi(u_{n-1})   \nabla u_{n-1}\,\dd W\bigg\|_{L^2_x}^2\bigg]\\
&=\,\kappa\, \E\bigg[\sum_{n=1}^M\|\nabla (u_n- u_{n-1})  \|_{L^2_x}^2\bigg]\\
&\quad+c(\kappa)\, \E\bigg[\sum_{n=1}^M\tau \big\|D\Phi(u_{n-1})   \nabla u_{n-1}\big\|_{L_2(\mathfrak U; L^2_x)}^2\bigg]\\
&\leq\,\kappa\, \E\bigg[\sum_{n=1}^M\|\nabla (u_n- u_{n-1})  \|_{L^2_x}^2\bigg]+c(\kappa)\, \E\bigg[\sum_{n=1}^M\tau \big(1+\big\|\nabla u_{n-1}\big\|_{ L^2_x}^2\big)\bigg]
\end{align*}
using also \eqref{eq:phi} in the last step. The first term can be absorbed for $\kappa\ll1$ on the left-hand side of (\ref{bas1}) and,
since $\big\|\nabla u_{n-1}\big\|_{ L^2_x}^2\leq\,2\mathcal E(u_{n-1})$, the final term can be handled by (discrete) Gronwall's lemma there. 
\\

{\bf ${\bf I}_{\bf 2}$)} We split the next term as $\mathcal N_m^{\tt B} := \mathcal N_m^{{\tt B},1} + \mathcal N_m^{{\tt B},2}$, which is motivated by the algebraic identity
$$f(b) - f(a) \equiv (b^3 - b) - (a^3 - a) = (b^2-a^2)b + (a^2-1)(b-a) \quad \forall\, a,b \in {\mathbb R} \, .$$
By generalized Young's inequality we estimate ($\kappa > 0$)
\begin{align}\nonumber
\E\bigg[\max_{1\leq m\leq M}\mathcal N_m^{{\tt B},1}\bigg]
&\leq\,\kappa\,\E\bigg[\sum_{n=1}^M \| \vert u_n\vert^2- \vert u_{n-1}\vert^2  \|_{L^2_x}^2\bigg]+
c(\kappa)\, \E\bigg[\sum_{n=1}^M\tau \big\| u_{n}\big\|^4_{L^4_x}  \bigg]\\ \label{dwd1}
&+c(\kappa)\, \E\bigg[\sum_{n=1}^M\tau^{-1}\bigg\|\int_{t_{n-1}}^{t_n}\Phi(u_{n-1})\,\dd W \bigg\|_{L_x^4}^4 \bigg]
\end{align}
For $\kappa \ll 1$ sufficiently small,  the first term may be absorbed on the left-hand side of (\ref{bas1}); for the second term, we note that
$$ \Vert u_n\Vert^4_{L^4_x}  \leq c \bigl(1+ \bigl\Vert\vert u_n\vert^2 - 1\bigr\Vert^2_{L^2_x}\bigr)\,.
$$
The latter part in the last inequality is part of ${\mathcal E}(u_n)$, which is why this term may now be bounded via discrete Gronwall inequality. For the final term we use estimates for the stochastic integral in UMD-Banach spaces, see \cite{VVW}. For an UMD Banach space $(X;\|\cdot\|)$ and a separable Hilbert space $\mathbb H$ with orthonormal basis $(h_k)_{k\geq1}$ we denote by $\gamma(\mathbb H,X)$
the space of $\gamma$-radonifying operators
 from $\mathbb H\rightarrow X$ with norm
\begin{align*}
\|\Phi\|_{\gamma(\mathbb H,X)}^2:=\mathbb E' \biggl[\bigg\|\sum_{k\geq 1}\gamma_k\Phi h_k\bigg\|_{X}^2\biggr] \quad \forall\, \Phi\in \mathcal L (\mathbb H,X)\, ,
\end{align*}
where $(\gamma_k)_{k\geq 1}$ is a sequence of standard Gaussian random variables given on probability space $(\Omega',\mathfrak F',\p')$ with expectation $\E'$. Note that this differs from the original probability space $(\Omega,\mathfrak F,\p)$. However, in the case of Hilbert spaces the norm above coincides with the Hilbert-Schmidt operator norm. In particular, the additional randomness disappears. In our case, it will be removed by the following estimate and is only introduced to quote the estimate from \cite{VVW}.
  We obtain using the assumption made on $\Phi$
\begin{align*}
\|\Phi(u)\|_{\gamma(\mathfrak U;L^4_x)}^2\leq \,c(1+\|u\|_{L^4_x}),\quad u\in L^4_x.
\end{align*}
Using now the estimate from \cite{VVW} for $X=L^4_x$
 as well as \eqref{eq:phi} we obtain
\begin{align*}
&\E\bigg[\sum_{n=1}^M\tau^{-1}\bigg\|\int_{t_{n-1}}^{t_n}\Phi(u_{n-1})\,\dd W \bigg\|_{L_x^4}^4 \bigg]\\
&\qquad\qquad\leq\,c\,\E\bigg[\sum_{n=1}^M\tau^{-1}\bigg\|\int_{t_{n-1}}^{\cdot}\Phi(u_{n-1})\bigg\|_{\gamma(L^2(0,T;\mathfrak U),L^4(\mt))}^4\bigg]\\
&\qquad\qquad=\,c\,\E\bigg[\sum_{n=1}^M\tau\big\|\Phi(u_{n-1})\big\|_{\gamma(\mathfrak U;L_x^4)}^4\bigg]\\
&\qquad\qquad\leq\,c\,\E\bigg[\sum_{n=1}^M\tau\big(1+\big\|u_{n-1}\big\|_{L_x^4}^4\big) \bigg]\\
&\qquad\qquad\leq\,c\,\E\bigg[\sum_{n=1}^M\tau\big(1+\big\||u_{n-1}|^2-1\big\|_{L_x^2}^2\big) \bigg].
\end{align*}

To bound the term $\mathcal N_m^{{\tt B},2}$, we distinguish the type of admissible noise:

{\bf ${\bf I}_{\bf 2_1}$)}  Let $\Phi$ satisfy \ref{N1} (a) and be bounded. Then there exists $c\geq0$ such that ($\kappa>0$ arbitrary):
\begin{align}\label{abb1}
\E\bigg[\max_{1\leq m\leq M}\mathcal N_m^{{\tt B},2}\bigg]
&\leq \E\bigg[\bigg(\sum_{n=1}^M\tau\sum_{k\geq 1}\bigg(\int_{\mt}\Phi(u_{n-1})e_k\,(|u_{n-1}|^2-1)(u_n-u_{n-1})\dx\bigg)^2\bigg)^{1/2}\bigg]\nonumber\\
&\leq \E\bigg[\bigg(\sum_{n=1}^M\|u_{n-1}|^2-1\|_{L^2_x}^2\|u_n-u_{n-1}\|_{L^2_x}^2\bigg)^{1/2}\bigg]\nonumber\\
& \leq \kappa\, {\mathbb E}\biggl[ \max_{1\leq n\leq M}   \bigl\Vert \vert u_{n-1}\vert^2 -1 \bigr\Vert^2_{L^2_x}\biggr] + c(\kappa){\mathbb E}\biggl[\sum_{n=1}^M\Vert u_n - u_{n-1}\Vert^2_{L^2_x}\biggr]
\end{align}
The first term will again be be bounded via discrete Gronwall inequality, while the second term will be bounded below.

{\bf ${\bf I}_{\bf 2_2}$)} Let $\Phi$ satisfy \ref{N2}. We use a compatibility property of data $\Phi$  and $f$ from $D{\mathcal E}$. For the derivation of the bound, it suffices to consider the case that all $\beta_k \equiv 0$ in (N2) only. Now fix one $k \in {\mathbb N}$. Then
\begin{align*}
{\tt II}_{\tt B}^n=&\sum_{k\geq1}\bigl( \alpha_k u_{n-1}\Delta_n W_k, \bigl[ \vert u_{n-1}\vert^2-1\bigr] \cdot[ u_n - u_{n-1}]\bigr)_{L^2_x} \\
&\qquad = \sum_{k\geq1}\bigl( \alpha_k \bigl[f(u_{n-1}) - \Delta u_{n-1}\bigr] \Delta_n W_k, u_n - u_{n-1}\bigr)_{L^2_x}\\&\qquad -\sum_{k\geq1} \bigl( \alpha_k \nabla u_{n-1} \Delta_n W_k, \nabla[u_n - u_{n-1}] \bigr)_{L^2_x} \\
& \qquad =: {\tt B}_n^1 + {\tt B}_{n}^2\, .
\end{align*}
After summation, and taking expectations, we first use Young's inequality, and then use It\^{o}-isometry as well as \eqref{eq:phismooth} ($\kappa > 0$)
%
%
\begin{align}\nonumber
{\mathbb E}\biggl[ \max_{1\leq m\leq M}\sum_{n=1}^m {\tt B}_n^1\biggr]
&\leq {\mathbb E}\biggl[ \max_{1 \leq m \leq M} \sum_{n=1}^m\bigg\Vert \sum_{k\geq1}\alpha_kD{\mathcal E}(u_{n-1})\Delta_n W_k\bigg\Vert_{L^2_x} \Vert u_n - u_{n-1}\Vert_{L^2_x}\biggr] \\
\nonumber&= {\mathbb E}\biggl[ \sum_{n=1}^M\bigg\Vert \sum_{k\geq1}\alpha_kD{\mathcal E}(u_{n-1})\Delta_n W_k\bigg\Vert_{L^2_x} \Vert u_n - u_{n-1}\Vert_{L^2_x}\biggr] \\
&\leq {\mathbb E}\biggl[ \sum_{n=1}^M \kappa\, \biggl\Vert \sum_{k\geq1}\alpha_kD{\mathcal E}(u_{n-1})\Delta_n W_k\biggr\Vert^2_{L^2_x} + c(\kappa)\, \Vert u_n - u_{n-1}\Vert_{L^2_x}^2\biggr]
\nonumber \\
&=  \kappa \, {\mathbb E}\biggl[ \sum_{m=1}^M  \tau\, \bigl\Vert D {\mathcal E}(u_{m-1})\bigr\Vert^2_{L^2_x}\biggr] +
c(\kappa)\, {\mathbb E}\biggl[ \sum_{n=1}^M \Vert u_n - u_{n-1}\Vert^2_{L^2_x}\biggr]\, . \label{abb2}
\end{align}
The leading term may now be absorbed on the left-hand side of (\ref{bas1}), while the last term is the same as in (\ref{abb1}),  and will be bounded below. The
estimation of ${\mathbb E}\bigl[ \max_{1\leq m\leq M}\sum_{n=1}^m {\tt B}_n^2\bigr]$ is immediate.\\

{\bf ${\bf I}_{\bf 3}$)} Since $\mathcal N_m^{\tt C}$ is a martingale we finally obtain by Burkholder-Davis-Gundy inequality, the expression $D\mathcal E(u_{n-1})=-\Delta u_{n-1}+f(u_{n-1})$ and
 \eqref{eq:phi},
 \begin{align*}
\E\bigg[\max_{1\leq m\leq M}\mathcal N_m^{\tt C}\bigg] &\leq c\, 
\E\bigg[\bigg(\sum_{n=1}^M\sum_{i\geq1}\tau\bigg(\int_{\mt}\Phi(u_{n-1})e_i\,(-\Delta u_{n-1}+u_{n-1}^3-u_{n-1})\dx\bigg)^2\bigg)^{\frac{1}{2}}\bigg]\\
&\leq\,c\,\E\bigg[\bigg(\sum_{n=1}^M\tau\big(1+\mathcal E(u_{n-1})^2\big)\bigg)^{\frac{1}{2}}\bigg]\\
&\leq\,\kappa\,\E\bigg[\max_{1\leq n\leq M}\mathcal E(u_{n-1})\bigg]+c(\kappa)\sum_{n=1}^M\tau\E\big[1+\mathcal E(u_{n-1})\big],
\end{align*}
which can be handled by absorption (for $\kappa$ small enough) and Gronwall's lemma.\\

By inserting the estimates {\bf ${\bf I}_{\bf 1}$)}--{\bf ${\bf I}_{\bf 3}$)} into (\ref{bas1}) and choosing $\kappa \ll 1$ small enough to allow absorption of terms,  there exists $c \equiv c(t_M)> 0$ such
\begin{align}\nonumber
&\E\bigg[\max_{1\leq n\leq M}\mathcal E ( u_n)\bigg]  + \frac{1}{2}{\mathbb E}\Bigl[\sum_{n=1}^m\Big(\Vert \nabla (u_n-u_{n-1})\Vert^2_{L^2_x} +\frac{1}{2}\Bigl\Vert   \vert u_n\vert^2 - \vert u_{n-1}\vert^2 \Bigr\Vert^2_{L^2_x}\Big)\Bigr] \\ \label{eq:0807}
&\qquad + \E\bigg[\sum_{n=1}^M\tau\Vert D{\mathcal E}(u_n)\Vert^2_{L^2_x}\bigg] \leq c\, \Bigl(\E\big[\mathcal E(u_0)\big]+\E\bigg[\sum_{n=1}^m\bigl\Vert u_n - u_{n-1}\bigr\Vert^2_{L^2_x}\bigg]\Bigr)\, .
\end{align}
{\bf II.} To bound the  last term in (\ref{eq:0807}) we
choose $\varphi = \bfu_{m}- \bfu_{m-1}$ in (\ref{tdiscrA}) to get
 \begin{align}\nonumber
\bigl\Vert \bfu_{m}-\bfu_{m-1}\Vert^2_{L^2_x} +\tau \bigl( {\tt I}_a^m + {\tt I}^m_b\bigr) &= \bigl( \Phi(\bfu_{m-1}) \,\Delta_{m}W, \bfu_{m}-\bfu_{m-1} \big)_{L^2_x} \\ \label{eq:0807b}
&\leq\Vert \Phi(\bfu_{m-1}) \,\Delta_{m}W\Vert_{L^2_x}^2 + \frac{1}{4} \Vert \bfu_{m}-\bfu_{m-1}\Vert^2_{L^2_x}\, ,
\end{align}
where the terms ${\tt I}^m_{\tt A}$ and ${\tt I}^m_{\tt B}$ are from (\ref{tfe1}).
Hence, we resume that
\begin{eqnarray*}
\bigl(1-\frac{1}{4} - \frac{\tau}{2}\bigr) \Vert \bfu_{m}-\bfu_{m-1}\Vert^2_{L^2_x}
+ \tau \bigl( {\mathcal E}(u_m) - {\mathcal E}(u_{m-1}) \bigr)
\leq \Vert \Phi(\bfu_{m-1}) \,\Delta_{m}W\Vert_{L^2_x}^2
\end{eqnarray*}
For $\tau \leq \frac{1}{4}$, and after summation over $1 \leq m \leq M$,
 \begin{equation*}
\frac{1}{2} \E\bigg[\sum_{m=1}^M\|\bfu_{m}-\bfu_{m-1}\|_{L^2_x}^2\Bigr] + 
\tau \, \E\bigl[{\mathcal E}(u_M)\bigr]  \leq \tau\, \E\big[ {\mathcal E}(\bfu_{0})\big] +
{\mathbb E}\bigg[ \sum_{m=1}^M\bigl\Vert \Phi(u_{m-1}) \Delta_mW \bigr\Vert^2_{L^2_x}\bigg]\, .
\end{equation*}
By It\^{o}-isometry, the last term is bounded by $${\mathbb E}\bigl[ \sum_{m=1}^M
\tau \Vert \Phi(u_{m-1})\Vert^2_{L_2(\mathfrak U;L^2_x)}\bigr]\leq\,c\,{\mathbb E}\Bigl[ \sum_{m=1}^M
\tau \big(1+\Vert u_{m-1}\Vert^2_{L^2_x}\big)\Bigr];$$ upon inserting this estimate into (\ref{eq:0807}) settles the estimate {\bf (a)} for $q = 0$.\\

{\bf III.} We may now proceed inductively  to settle {\bf (a)} for $q \geq 1$, by multiplying (\ref{eins-1}) with $\bigl[{\mathcal E}(\bfu_{m})\bigr]^{2^q-1}$ before summation; it is the implicit numerical treatment of drift terms in scheme (\ref{tdiscrA}) that generates new numerical diffusion terms which then control newly arising terms; see \cite[Lemma 3.2]{AnBaNuPr} for details.%
%

\medskip

Ad~{\bf (b)}. Before we come to the proof of 
\eqref{lem:3.1c} we need some preliminary estimates for lower order deriavtives. We
choose $\bfu_{m}$ as a test function to get
 \begin{align}\label{eq:0807b}
 \begin{aligned}
\big(&\bfu_{m}-\bfu_{m-1},\bfu_m\big)_{L^2_x} +\tau\|\nabla\bfu_{m}\|^2_{L^2_x}\\&=
 -\tau(f(u_m),u_m)_{L^2_x} 
+ \big(\Phi(\bfu_{m-1})\,\Delta_{m-1}W,\bfu_{m}\big)_{L^2_x},
\end{aligned}
\end{align}
where, clearly,
\begin{align*}
 -\tau(f(u_m),u_m)_{L^2_x} \leq\,\tau \bigl(\|u_m\|_{L^2_x}^2 - \Vert u_m\Vert_{L^4_x}^4\bigr)\, .
\end{align*}
By binomial formula, and iterating this estimate and applying Gronwall's lemma proves
 \begin{align*}
\E\bigg[\max_{1\leq m\leq M}\tfrac{1}{2}\|\bfu_{m}\|_{L^2_x}^2\bigg]&+\E\bigg[\tfrac{1}{2}\sum_{m=1}^M\|\bfu_{m}-\bfu_{m-1}\|_{L^2_x}^2 + \tau\sum_{m=1}^M \bigl(\|\nabla\bfu_{m}\|^2_{L^2_x}+ \Vert u_m\Vert_{L^4_x}^4\bigr)\bigg]\\&\leq \,c\,\E\big[\tfrac{1}{2} \|\bfu_{0}\|^2_{L^2_x}\big]+c\,\E\bigg[\max_{1\leq m\leq M}\mathscr M_m^1\bigg]+c\,\E\bigg[\max_{1\leq m\leq M}\mathscr M_m^2\bigg],
\end{align*}
where 
\begin{align*}
\mathscr M_m^1&=\sum_{n=1}^m\big(\Phi(\bfu_{n-1})\,\Delta_nW,\bfu_{n-1}\big)_{L^2_x},\\
\mathscr M_m^2&=\sum_{n=1}^m\big(\Phi(\bfu_{n-1})\,\Delta_nW, \bfu_n-\bfu_{n-1}\big)_{L^2_x}.
\end{align*}
Since $\bfu_{m-1}$ is $\mathfrak F_{t_{m-1}}$-measurable $\mathscr M_m^1$ is a (discrete) martingale
such that, by Burkholder-Davis-Gundy inequality, \eqref{eq:phi} and Young's inequality,
\begin{align*}
\E\bigg[\max_{1\leq m\leq M}\big|\mathscr M_{m}^1\big|\bigg]&\leq\,c\,\E\bigg[\bigg(\sum_{n=1}^M\int_{t_{n-1}}^{t_n}\|\Phi(\bfu_{n-1})\|^2_{L_2(\mathfrak U,L^2_x)}\|\bfu_{n-1}\|^2_{L^2_x}\ds\bigg)^{\frac{1}{2}}\bigg]\\
&\leq\,c\,\E\bigg[\max_{0\leq n\leq M}\|\bfu_{n}\|_{L^2_x}\bigg(\tau\sum_{n=1}^M\|\Phi(\bfu_{n-1})\|^2_{L_2(\mathfrak U,L^2_x)}\ds\bigg)^{\frac{1}{2}}\bigg]\\
&\leq\,\kappa\,\E\bigg[\max_{0\leq n\leq M}\|\bfu_{n}\|^2_{L^2_x}\bigg]+\,c(\kappa)  \Bigl(\E\Big[\tau\sum_{n=1}^M\|\bfu_{n-1}\|_{L^2_x}^2\Big]+ {\color{blue} t_M}
\Bigr)
\end{align*} 
for any $\kappa>0$.
Furthermore, we have for arbitrary $\kappa>0$
\begin{align*}
\E\bigg[\max_{1\leq m\leq M}|\mathscr M^2_{m}|\bigg]&\leq \,\kappa\,\E\bigg[  \sum_{n=1}^M\big\|\bfu_{n}-\bfu_{n-1}\big\|_{L^2_x}^2\bigg]+c(\kappa)\,\E\bigg[\sum_{n=1}^M\bigg\| \int_{t_{n-1}}^{t_n}\Phi(\bfu_{n-1})\,\dd W  \bigg\|_{L^2_x}^2\bigg]\\
&\leq \,\kappa\,\E\bigg[ \sum_{n=1}^M\|\bfu_{n}-\bfu_{n-1}\|_{L^2_x}^2 \bigg]+ c(\kappa)\,\E\bigg[\tau\sum_{n=1}^M\|\Phi(\bfu_{n-1})\|_{L_2(\mathfrak U;L^2_x)}^2\bigg]\\
&\leq \,\kappa\,\E\bigg[\sum_{n=1}^M \|\bfu_{n}-\bfu_{n-1}\|_{L^2_x}^2 \bigg]+ c(\kappa)\,\E\bigg[\tau\sum_{n=1}^M\|\bfu_{n-1}\|^2_{L^2_x}  + t_M\bigg]
\end{align*}
due to Young's inequality, It\^{o}-isometry and \eqref{eq:phi}. Absorbing the $\kappa$-terms and applying Gronwall's lemma we conclude
 \begin{align*}
\E\bigg[\max_{1\leq n\leq M}&\|\bfu_{n}\|^2_{L^2_x}
+\tau\sum_{n=1}^M \bigl( \|\nabla\bfu_n\|^2_{L^2_x}+ \Vert \bfu_n\Vert^4_{L^4_x}\bigr)\bigg]\leq
\,c\,\E\Bigl[\|\bfu_{0}\|^2_{L^2_x}+1\Bigr]\, .
\end{align*}
Let now $q\in\N$: we argue by induction as in {\bf (a)} (first multiply (\ref{eq:0807b}) by $\|u_n\|^{2^{q}-2}_{L^2_x}$, then take expections to get
bounds for involved; afterwards, use these bounds when $\max$ is applied first before taking expectations; see \cite[Lemma 3.2]{AnBaNuPr} for details) to get
  \begin{align}\label{eq:0807d}
 \begin{aligned}
\E\bigg[\max_{1\leq n\leq M}&\|\bfu_{n}\|^{2^q}_{L^2_x}+\tau\sum_{n=1}^M\|u_n\|^{2^{q}-2}_{L^2_x} \bigl(\|\nabla\bfu_n\|^2_{L^2_x}+ \Vert \bfu_n\Vert^4_{L^4_x}\bigr)\bigg]\leq
\,c\,\E\Bigl[\|\bfu_{0}\|^{2^q}_{L^2_x}+1\Bigr]\, .
\end{aligned}
\end{align}

By formally testing with $-\Delta u_m$, using that
\begin{align*}
 -\tau(f(\bfu_m),\Delta\bfu_m)_{L^2_x}&\geq \tau \bigl(
3 \Vert u_m \nabla u_m\Vert^2_{L^2_x} - \|\nabla u_m\|_{L^2_x}^2\bigr)
\end{align*}
and controlling the stochastic integral with the help of \eqref{eq:phi1b} we obtain similarly
 \begin{align*}
\E\bigg[\max_{1\leq n\leq M}&\|\bfu_{n}\|^2_{W^{1,2}_x}
+\tau\sum_{n=1}^M \|\Delta\bfu_n\|^2_{L^2_x}\bigg]\leq
\,c\,\E\Bigl[\|\bfu_{0}\|^2_{W^{1,2}_x}+1\Bigr].
\end{align*}
and again for $q\in \N$, by multiplication with $\|u_m\|_{W^{1,2}_x}^{2^{q}-2}$ before taking expectations,
 \begin{align}\label{eq:0807e}
 \begin{aligned}
\E\bigg[\max_{1\leq n\leq M}&\|\bfu_{n}\|^{2^q}_{W^{1,2}_x}+\tau\sum_{n=1}^M\|\bfu_{n}\|^{2^{q}-2}_{W^{1,2}_x} \|\Delta\bfu_n\|^2_{L^2_x}\bigg]\leq
\,c\,\E\Bigl[\|\bfu_{0}\|^{2^q}_{W^{1,2}_x}+1\Bigr],
\end{aligned}
\end{align}

Now we come to the proof of \eqref{lem:3.1c}. We formally test 
the equation with $\Delta^2 u_m$. For the nonlinear term we obtain ($\kappa >0$)
\begin{align*}
-\tau(f(\bfu_m),\Delta^2\bfu_m)_{L^2_x}&=\tau(f'(\bfu_m)\nabla\bfu_m,\Delta\nabla\bfu_m)_{L^2_x}\\
&\leq\tau\|f'(u_m)\|_{L^6_x}\|\nabla u_m\|_{L^3_x}\|\nabla\Delta u_m\|_{L^2_x}\\
&\leq\,c\tau(1+\|u_m\|_{L^{6}_x})\|u_m\|_{L^\infty_x}\|\nabla u_m\|_{L^2_x}^{\frac{1}{2}}\|\nabla^2 u_m\|_{L^2_x}^{\frac{1}{2}}\|\nabla\Delta u_m\|_{L^2_x}\\
&\leq\,c\tau(1+\|\nabla u_m\|^2_{L^{2}_x})\|\nabla^2 u_m\|_{L^2_x}\|\nabla\Delta u_m\|_{L^2_x}\\
&\leq\tau\,c(\kappa)(1+\|\nabla u_m\|^4_{L^{2}_x})\|\nabla^2 u_m\|_{L^2_x}^2+\kappa\|\nabla\Delta u_m\|^2_{L^2_x}.
\end{align*}
Here the second term can be absorbed for $\kappa\ll1$, whereas the first one (in expectation and summed form) can be controlled by \eqref{eq:0807e} (with $q=3$).
 If $d=3$ we use that $\Phi$ is assumed to be affine linear in $u$, \emph{cf.}~assumption \ref{N2}. In this situation the stochastic terms can be estimated exactly as in the proof of \eqref{eq:0807d}.
 If $d=1,2$ this problem can be overcome by Ladyshenskaya's inequality and \ref{N1}. In particular, we have by \eqref{eq:phi2b}
 \begin{align*}
 \E\bigg[\tau\sum_{n=1}^M\|\Phi(\bfu_{n-1})\|_{L_2(\mathfrak U;W^{2,2}_x)}^2\bigg]&\leq\,c\, \E\bigg[\tau\sum_{n=1}^M\big(1+\|\bfu_{n-1}\|_{W^{1,4}_x}^4+\|\bfu_{n-1}\|^2_{W^{2,2}_x}\big)\bigg]\\
 &\leq\,c\, \E\bigg[\tau\sum_{n=1}^M\big(1+\|\bfu_{n-1}\|_{W^{1,2}_x}^2\|\bfu_{n-1}\|_{W^{2,2}_x}^2+\|\bfu_{n-1}\|^2_{W^{2,2}_x}\big)\bigg],
 \end{align*}
 which is uniformly bounded by \eqref{lem:3.1c} with $q=2$.
This settles the proof of \eqref{lem:3.1c} for $q=1$.
An inductive argument may now be employed to complete the proof for (\ref{lem:3.1c}) for $q\in\N$.

Ad~{\bf (c)}. The proof works along the same lines testing with $\Delta^3\bfu_m$ and estimating the stochastic terms by means of \eqref{eq:phismooth}.
For the nonlinear term we obtain
\begin{align*}
-\tau( &f(\bfu_m),\Delta^3\bfu_m)_{L^2_x}=-\tau(\mathrm{div}(f'(\bfu_m)\nabla\bfu_m),\Delta^2\bfu_m)_{L^2_x}\\
&=-\tau(f''(\bfu_m)|\nabla\bfu_m|^2,\Delta^2\bfu_m)_{L^2_x}-\tau(f'(\bfu_m)\Delta\bfu_m,\Delta^2\bfu_m)_{L^2_x}\\
&\leq\tau\int_{\mt}|\bfu_m| |\nabla\bfu_m|^2|\Delta^2\bfu_m|\dx+\tau\int_{\mt}(|\bfu_m|^2+1) |\Delta\bfu_m||\Delta^2\bfu_m|\dx
\\&\leq\,c(\kappa)\tau\|\bfu_m\|_{W^{2,2}_x}^6+\kappa\tau\|\Delta^2\bfu_m\|^2_{L^2_x}.
\end{align*}
The second term on the right-hand side can be absorbed provided we choose $\kappa>0$ small enough. The first one (summed with respect to $m$) ins bounded by \eqref{lem:3.1c}.
\end{proof}

%

\subsection{The Kolmogorov equation}\label{sec:kol}
We set
\begin{align*}
\UU(t,\bfh):=\E \big[\varphi(\bfu(t,\bfh))\big],
\end{align*}
where $\bfu(t,\bfh)$ is the solution to \eqref{eq:SNS} with $\bfu_0=\bfh$ and $\varphi\in C_b^1(L^2_x)$. It is well-known that $\UU$ solves
\begin{align}\label{eq:Kolm'}
\partial_t \UU(t,\bfh)=\frac{1}{2}\tr\big(\Phi(\bfh)\Phi^\ast(\bfh) D^2\UU(t)\big)+\big(\mathcal A\bfh+f(\bfh),D\UU(t,\bfh)\big),\quad \UU(0,\bfh)=\varphi(\bfh).
\end{align}
 In the following we derive some estimates for $\UU$. The proofs are only formal and can be made rigorous by considering a finite-dimensional Galerkin approximation\footnote{Though some care is required for Lemma \ref{lem:kol1}.} of \eqref{eq:SNS}
(leading to a finite-dimensional Kolmogorov equation approximating \eqref{eq:Kolm'}) and establishing estimates which are uniform with respect to the Galerkin parameter. Such a procedure is technical and tedious but standard in literature. We refer to \cite{Ce} and \cite[Chapter 9]{PrZa} for a detailed analysis of Kolmogorov equations.

A crucial ingredient in our proof is the monotonicity of the leading term in $f(u)=u^3-u$. It is currently an open problem to obtain similar results for semilinear equations without this property such as the 2D Navier--Stokes equations. 
Since we allow more regularity for the solution to \eqref{eq:SNS} through more regular data when compared to previous papers, we only require estimates in $L^2_x$ for the weak error analysis. For example, the estimates in \cite{BrDe,BrGo,De} are given in fractional Sobolev spaces (with differentiability strictly smaller than 1/2) for the derivatives of $\UU$ are proved instead; morover, the situation here is more complicated than that in \cite{BrDe,De} due to the non-Lipschitz nonlinearity in \eqref{eq:SNS}. In \cite{BrGo} the Kolmogorov equation for \eqref{eq:SNS} in 1D is considered (with additive space-time white noise), while we study \eqref{eq:SNS} with smooth multiplicative noise in cases $d=1,2,3$ in the following.
\begin{lemma}\label{lem:kol1}
Let $\varphi\in C^1_b(L^p(\mt))$ for some $p\in(1,\infty)$ and suppose that \ref{N1} {\bf (a)} holds. Then we have for $t\in(0,T)$ and $\bfh\in L^p(\mt)$
\begin{align*}
\|D\UU(t,\bfh)\|_{L^{p}_x}\leq\,c(p,\varphi,T).
\end{align*}
\end{lemma}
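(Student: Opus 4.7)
The plan is to represent $D\UU$ via the linearization of the solution map, and to control this linearization in $L^p_x$ using Itô's formula together with the one-sided Lipschitz property $f'(\bfu)=3\bfu^2-1\geq -1$. Concretely, for $\bfk\in L^p(\mt)$ I introduce the G\^{a}teaux derivative $\eta^{\bfk}(t):=D\bfu(t,\bfh)\bfk$, which formally satisfies the linearized SPDE
\begin{align*}
\dd\eta^{\bfk}=\bigl(\Delta\eta^{\bfk}-f'(\bfu)\eta^{\bfk}\bigr)\dt+D\Phi(\bfu)[\eta^{\bfk}]\,\dd W,\qquad \eta^{\bfk}(0)=\bfk.
\end{align*}
By the chain rule, this gives the representation $\langle D\UU(t,\bfh),\bfk\rangle_{L^2_x}=\E\bigl[\langle D\varphi(\bfu(t,\bfh)),\eta^{\bfk}(t)\rangle_{L^2_x}\bigr]$, so the whole task is to bound $\eta^{\bfk}(t)$ in $L^p_x$ linearly in $\bfk$.

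Next I would apply It\^{o}'s formula to $t\mapsto \frac{1}{p}\|\eta^{\bfk}(t)\|_{L^p_x}^p$. The Laplacian term contributes $-(p-1)\int|\eta^{\bfk}|^{p-2}|\nabla\eta^{\bfk}|^2\dx\leq 0$ and is discarded. The nonlinear drift uses the key one-sided bound $f'(\bfu)\geq -1$, producing
\begin{align*}
-\int |\eta^{\bfk}|^{p-2}\eta^{\bfk}\,f'(\bfu)\,\eta^{\bfk}\dx\leq \int |\eta^{\bfk}|^p\dx.
\end{align*}
For the It\^{o}-correction I exploit the pointwise bound $\sum_{k\geq 1}|\nabla_\xi g_k(x,\xi)|^2\leq c$ from \eqref{eq:phi}, which gives $\sum_{k\geq 1}|D\Phi(\bfu)[\eta^{\bfk}]e_k(x)|^2\leq c|\eta^{\bfk}(x)|^2$ pointwise and hence
\begin{align*}
\sum_{k\geq 1}\int |\eta^{\bfk}|^{p-2}|D\Phi(\bfu)[\eta^{\bfk}]e_k|^2\dx\leq c\int |\eta^{\bfk}|^p\dx.
\end{align*}
Taking expectations (the stochastic integral is a local martingale, which we handle by standard localization), combining the three estimates, and applying Gronwall's lemma yield $\E\|\eta^{\bfk}(t)\|_{L^p_x}^p\leq e^{c(p)t}\|\bfk\|_{L^p_x}^p$ for all $t\in[0,T]$.

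Finally I conclude by H\"{o}lder's inequality: since $\varphi\in C^1_b(L^p_x)$ implies $\|D\varphi(\bfu(t,\bfh))\|_{L^{p'}_x}\leq \|\varphi\|_{C^1_b(L^p_x)}$, the representation above gives
\begin{align*}
|\langle D\UU(t,\bfh),\bfk\rangle_{L^2_x}|\leq \|\varphi\|_{C^1_b(L^p_x)}\,\bigl(\E\|\eta^{\bfk}(t)\|_{L^p_x}^p\bigr)^{1/p}\leq c(p,\varphi,T)\|\bfk\|_{L^p_x},
\end{align*}
which is the asserted bound when $D\UU(t,\bfh)$ is paired with test directions in $L^p_x$ via the $L^2$ duality.

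The main obstacle is the rigorous justification of the derivation: the linearized process $\eta^{\bfk}$ and the $L^p$-It\^{o} formula cannot be applied directly because the cubic nonlinearity $f$ is only locally Lipschitz, and a priori $\eta^{\bfk}$ need not exist as a classical G\^{a}teaux derivative in $L^p_x$. The standard remedy, already announced by the authors, is to perform all computations on a finite-dimensional Galerkin truncation of \eqref{eq:SNS}, where $\eta^{\bfk}$ exists as the classical derivative of the flow and It\^{o}'s formula applies unconditionally, with all constants in the above estimates independent of the truncation level so that one may pass to the limit. Alternatively, one may truncate $f$ to a smooth bounded drift $f_R$ and rely on the fact that the one-sided Lipschitz property $f_R'\geq -1$ can be preserved under truncation to retain uniform estimates before removing the truncation.
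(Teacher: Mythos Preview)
Your approach is essentially the same as the paper's: represent $D\UU$ via the linearised flow $\eta$, apply It\^o's formula to a power of the $L^q$-norm of $\eta$, exploit the one-sided bound $f'\geq -1$ for the drift and the pointwise bound $\sum_k|\nabla_\xi g_k|^2\leq c$ from \ref{N1} {\bf (a)} for the It\^o correction, and close with Gronwall. The paper also invokes the same Galerkin justification you describe.

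There is, however, a duality slip in your final step. The inequality $|\langle D\UU(t,\bfh),\bfk\rangle_{L^2_x}|\leq c\,\|\bfk\|_{L^p_x}$ for all $\bfk\in L^p_x$ yields $\|D\UU(t,\bfh)\|_{L^{p'}_x}\leq c$, \emph{not} $\|D\UU(t,\bfh)\|_{L^{p}_x}$. To obtain the $L^p$ bound on $D\UU$ by $L^2$-duality you must test with directions $\bfg\in L^{p'}_x$ and control $\eta^{\bfg}$ in $L^{p'}_x$; this is precisely what the paper does (note the exponent $p'$ throughout the paper's It\^o computation). The fix is immediate: your It\^o/Gronwall argument works verbatim for any exponent in $(1,\infty)$, so running it with $p'$ in place of $p$ delivers the correct conclusion.
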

\begin{proof}
We proceed formally. A rigorous proof can be obtained as follows:
\begin{itemize}
\item We first proof the result for $p=2$ by means of a Galerkin approximation and pass to the limit obtaining well-defined infinite-dimensional objects.
\item In particular, we obtain a variational solution to \eqref{eq:3003} below to which we can apply It\^{o}'s formula. This can be justified by truncating the function $z\mapsto z^p$ and applying the It\^o-formula in Hilbert spaces from \cite[Theorem 4.17]{PrZa}.
\end{itemize}
Differentiating $\UU$ with respect to $\bfh$ in direction $\bfg\in L^{p'}(\mt)$ with $p':=p/(p-1)$ yields
\begin{align}\label{eq:DU}
D\UU(t,\bfh)\cdot\bfg=\E\big[D\varphi(\bfu(t))\, \bfeta^{\bfh,\bfg}(t)\big],
\end{align}
where $\bfeta^{\bfh,\bfg}$ solves
\begin{align}\label{eq:3003}
\dd \bfeta^{\bfh,\bfg}=\big(\Delta \bfeta^{\bfh,\bfg}-f'(\bfu)\,\bfeta^{\bfh,\bfg}\big)\dt+D\Phi(\bfu)\bfeta^{\bfh,\bfg}\,\dd W,\qquad \bfeta^{\bfh,\bfg}(0)=\bfg.
\end{align}
Since $D\varphi$ is bounded we have
\begin{align}\label{eq:1604}
|D\UU(t,\bfh)\cdot\bfg|\leq\Big(\E\big[\|\bfeta^{\bfh,\bfg}(t)\|_{L^{p'}_x}^{p'}\big]\Big)^{1/p'}.
\end{align}
Applying It\^{o}'s formula to \eqref{eq:3003} yields
\begin{align*}
\tfrac{1}{p'}\|\bfeta^{\bfh,\bfg}(t)\|_{L^{p'}_x}^{p'}&+\int_0^t\|\nabla|\bfeta^{\bfh,\bfg}(s)|^{p'/2}\|_{L^2_x}^2\ds\\&=\tfrac{1}{p'}\|\bfg\|_{L^{p'}_x}^{p'}-\int_0^t\int_{\mt}f'(u(s))|\bfeta^{\bfh,\bfg}(s)|^{p'}\dxs\\
&+\int_{\mt}\int_0^t|\bfeta^{\bfh,\bfg}|^{p'-2}\bfeta^{\bfh,\bfg}D\Phi(\bfu)\bfeta^{\bfh,\bfg}\,\dd W\dx\\&+\tfrac{p'-1}{2}\sum_{i\geq1}\int_0^t\int_{\mt}|\bfeta^{\bfh,\bfg}(s)|^{p'-2}|D\Phi(\bfu(s))\bfeta^{\bfh,\bfg}(s)e_i|^2\dxs.
\end{align*}
Since $f'\geq -1$, the second term on the right-hand side is clearly bounded by $\int_0^t\|\bfeta^{\bfh,\bfg}(t)\|_{L^{p'}_x}^{p'}\ds$, while the third one vanishes under the expectation. Using \ref{N1}, the same bound follows for the last term. We conclude
\begin{align*}
\E\Big[\|\bfeta^{\bfh,\bfg}(t)\|_{L^{p'}_x}^{p'}\Big]+\E\bigg[\int_0^t\|\nabla|\bfeta^{\bfh,\bfg}(s)|^{p'/2}\|_{L^2_x}^2\ds\bigg]\leq \|\bfg\|_{L^{p'}_x}^{p'}+c\int_0^t\E\Big[\|\bfeta^{\bfh,\bfg}(s)\|_{L^{p'}_x}^{p'}\Big]\ds
\end{align*}
such that, by Gronwall's lemma,
\begin{align}\label{eq:1102a}
\E\Big[\|\bfeta^{\bfh,\bfg}(t)\|_{L^{p'}_x}^{p'}\Big]+\E\bigg[\int_0^t\|\nabla|\bfeta^{\bfh,\bfg}(s)|^{p'/2}\|_{L^2_x}^2\ds\bigg]\leq c\|\bfg\|_{L^{p'}_x}^{p'}.
\end{align}
Plugging this into \eqref{eq:1604} and taking the supremum with respect to $\bfg$ proves the claim.
\end{proof}
Note that it is easy to generalise the argument above to obtain
 \begin{align}\label{eq:1102a}
\E\Big[\|\bfeta^{\bfh,\bfg}(t)\|_{L^{p'}_x}^q\Big]+\E\bigg[\bigg(\int_0^t\|\nabla|\bfeta^{\bfh,\bfg}(s)|^{p'/2}\|_{L^2_x}^2\ds\bigg)^{\frac{q}{p'}}\bigg]\leq\,c_q\|\bfg\|^q_{L^{p'}_x}
\end{align}
for all $q\geq p'$. For that purpose it is sufficient to argue as before
and estimate the stochastic integral by means of the Burkholder-Davis-Gundy inequality and \eqref{eq:phi}. By a parabolic interpolation \eqref{eq:1102a} (with $p'=2$) yields
 \begin{align}\label{eq:1102}
\E\Big[\|\bfeta^{\bfh,\bfg}(t)\|_{L^{10/3}(0,T;L^{10/3}_{x})}^q\Big]\leq\,c_q\|\bfg\|^q_{L^{p'}_x}
\end{align}
for all $q\geq 2$.

\begin{lemma}\label{lem:kol2}
Let $\varphi\in C^2_b(L^2(\mt))$ and suppose that \eqref{N1} holds. Then we have for $t\in(0,T)$, $\bfh,\bfg\in L^2(\mt)$ and $\bfk\in L^6(\mt)$
\begin{align*}
|D^2\UU(t,\bfh)(\bfg,\bfk)|\leq\,c(\varphi,T)\|\bfg\|_{L^2_x}\|\bfk\|_{L^6_x}.
\end{align*}
\end{lemma}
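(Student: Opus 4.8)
The plan is to differentiate the first--variation formula \eqref{eq:DU} once more in $\bfh$, to express $D^2\UU$ through a \emph{second}--variation process, and then to run a variational/It\^o energy estimate for that process in $L^2_x$; it is the one--sided bound $f'\geq-1$ that makes the concluding Gronwall step work. As in Lemma \ref{lem:kol1} the computation is formal and is made rigorous by the Galerkin truncation indicated there. Differentiating \eqref{eq:DU} in the direction $\bfk$ gives, by the chain and product rules,
\begin{align*}
D^2\UU(t,\bfh)(\bfg,\bfk)=\E\big[D^2\varphi(\bfu(t))\big(\bfeta^{\bfh,\bfk}(t),\bfeta^{\bfh,\bfg}(t)\big)\big]+\E\big[D\varphi(\bfu(t))\,\bfzeta^{\bfh,\bfg,\bfk}(t)\big],
\end{align*}
where $\bfeta^{\bfh,\bfg},\bfeta^{\bfh,\bfk}$ solve \eqref{eq:3003} and the second variation $\bfzeta=\bfzeta^{\bfh,\bfg,\bfk}$ solves
\begin{align*}
\dd\bfzeta=\big(\Delta\bfzeta-f'(\bfu)\bfzeta-f''(\bfu)\bfeta^{\bfh,\bfk}\bfeta^{\bfh,\bfg}\big)\dt+\big(D\Phi(\bfu)\bfzeta+D^2\Phi(\bfu)(\bfeta^{\bfh,\bfk},\bfeta^{\bfh,\bfg})\big)\dd W,\qquad\bfzeta(0)=0.
\end{align*}

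Since $D^2\varphi$ is a bounded bilinear form on $L^2_x\times L^2_x$, the first term is handled by Cauchy--Schwarz and \eqref{eq:1102a}: it is at most $c(\E\|\bfeta^{\bfh,\bfk}(t)\|_{L^2_x}^2)^{1/2}(\E\|\bfeta^{\bfh,\bfg}(t)\|_{L^2_x}^2)^{1/2}\le c\|\bfk\|_{L^2_x}\|\bfg\|_{L^2_x}\le c\|\bfk\|_{L^6_x}\|\bfg\|_{L^2_x}$. Since $D\varphi$ is bounded on $L^2_x$, the second term is at most $c\,(\E[\|\bfzeta(t)\|_{L^2_x}^2])^{1/2}$, so everything reduces to $\E[\|\bfzeta(t)\|_{L^2_x}^2]\le c\|\bfg\|_{L^2_x}^2\|\bfk\|_{L^6_x}^2$. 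For this I would apply It\^o's formula to $t\mapsto\frac12\|\bfzeta(t)\|_{L^2_x}^2$:
\begin{align*}
\tfrac12\|\bfzeta(t)\|_{L^2_x}^2+\int_0^t\|\nabla\bfzeta\|_{L^2_x}^2\ds=&-\int_0^t\int_{\mt}f'(\bfu)|\bfzeta|^2\dxs-\int_0^t\int_{\mt}f''(\bfu)\bfeta^{\bfh,\bfk}\bfeta^{\bfh,\bfg}\bfzeta\dxs\\
&+\tfrac12\sum_{k\ge1}\int_0^t\big\|\big(D\Phi(\bfu)\bfzeta+D^2\Phi(\bfu)(\bfeta^{\bfh,\bfk},\bfeta^{\bfh,\bfg})\big)e_k\big\|_{L^2_x}^2\ds+M(t),
\end{align*}
with $M$ a martingale, $\E[M(t)]=0$ (for the supremum in $t$ one adds a Burkholder--Davis--Gundy step). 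Here $f'\ge-1$ controls the first term by $\int_0^t\|\bfzeta\|_{L^2_x}^2\ds$; the It\^o correction is bounded via \eqref{eq:phi1c}, \eqref{eq:phi2c} by $c\int_0^t(\|\bfzeta\|_{L^2_x}^2+\|\bfeta^{\bfh,\bfk}\|_{L^4_x}^2\|\bfeta^{\bfh,\bfg}\|_{L^4_x}^2)\ds$; and, since $f''(\bfu)=6\bfu$ and $\bfu\in L^\infty_tL^2_x\cap L^2_tL^6_x$ a.s.\ (as $|\bfu|\in L^2_tW^{1,2}_x$ and $d\le3$), the source term is estimated by Hölder in space with exponents $(6,6,6,2)$ applied to $(\bfu,\bfeta^{\bfh,\bfk},\bfzeta,\bfeta^{\bfh,\bfg})$, followed by $\|\bfzeta\|_{L^6_x}\le c\|\bfzeta\|_{W^{1,2}_x}$ and Young's inequality, absorbing a small multiple of $\|\nabla\bfzeta\|_{L^2_x}^2$ on the left and leaving $c\int_0^t\|\bfu\|_{L^6_x}^2\|\bfeta^{\bfh,\bfk}\|_{L^6_x}^2\|\bfeta^{\bfh,\bfg}\|_{L^2_x}^2\ds$ on the right.

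Taking expectations and applying (the implicit form of) Gronwall's lemma then reduces the claim to
\begin{align*}
\E\bigg[\int_0^T\Big(\|\bfeta^{\bfh,\bfk}\|_{L^4_x}^2\|\bfeta^{\bfh,\bfg}\|_{L^4_x}^2+\|\bfu\|_{L^6_x}^2\|\bfeta^{\bfh,\bfk}\|_{L^6_x}^2\|\bfeta^{\bfh,\bfg}\|_{L^2_x}^2\Big)\ds\bigg]\le c\|\bfg\|_{L^2_x}^2\|\bfk\|_{L^6_x}^2,
\end{align*}
which I would obtain by combining the moment bounds \eqref{eq:1102a} (with $p'=2$, controlling $\bfeta^{\bfh,\bfg}$ in $L^\infty_tL^2_x\cap L^2_tW^{1,2}_x$, hence after parabolic interpolation as in \eqref{eq:1102} in $L^{8/3}_tL^4_x$, by $\|\bfg\|_{L^2_x}$; and with $p'=6$, controlling $\bfeta^{\bfh,\bfk}$ in $L^\infty_tL^6_x$ by $\|\bfk\|_{L^6_x}$), the energy estimates of Lemma \ref{lemma:3.1B}(a) and Lemma \ref{lemma:3.1A} for moments of $\bfu$ in $L^\infty_tL^2_x\cap L^2_tL^6_x$, and Hölder's inequality in $\omega$ (pulling out the $L^\infty_t$--norms of $\bfeta^{\bfh,\bfk},\bfeta^{\bfh,\bfg}$, which have all moments).

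The main obstacle is the cubic source term $f''(\bfu)\bfeta^{\bfh,\bfk}\bfeta^{\bfh,\bfg}=6\bfu\bfeta^{\bfh,\bfk}\bfeta^{\bfh,\bfg}$: it is the only place where the estimate departs from a routine linear energy bound. It forces one to use the parabolic smoothing of the linearised equations (so that $\|\bfzeta\|_{L^6_x}$, available only through $\|\nabla\bfzeta\|_{L^2_x}$, can be absorbed), and the bookkeeping of Lebesgue exponents in $x$, $t$ and $\omega$ is delicate in $d=3$; in particular, it is precisely the need to put $\bfeta^{\bfh,\bfk}$ into $L^\infty_tL^6_x$ (via \eqref{eq:1102a} with $p'=6$) to overcome the time--integrability defect $\bfu\in L^2_tL^6_x$ that dictates the asymmetric bound $\|\bfg\|_{L^2_x}\|\bfk\|_{L^6_x}$, the stronger norm falling on the direction whose first variation must carry the extra spatial integrability.
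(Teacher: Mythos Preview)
Your proposal is correct and follows essentially the same route as the paper: express $D^2\UU$ via the second variation $\bfzeta$, apply It\^o's formula to $\tfrac12\|\bfzeta\|_{L^2_x}^2$, use $f'\ge -1$, absorb the gradient term coming from the cubic source, and close with Gronwall plus the first--variation moment bounds \eqref{eq:1102a}--\eqref{eq:1102}. The only differences are cosmetic: for the source term $6\bfu\,\bfeta^{\bfh,\bfg}\bfeta^{\bfh,\bfk}\bfzeta$ you apply H\"older $(6,6,6,2)$ directly and then Sobolev $\|\bfzeta\|_{L^6_x}\le c\|\bfzeta\|_{W^{1,2}_x}$, whereas the paper passes through the $W^{-1,2}_x$--$W^{1,2}_x$ duality and $L^{6/5}_x\hookrightarrow W^{-1,2}_x$ (the dual statement of the same Sobolev embedding); for the $D^2\Phi$ correction you split $(4,4)$ while the paper splits $(10/3,5)$; and you keep the products and use H\"older in $\omega$, while the paper decouples them via Young's inequality and then invokes bilinearity of $D^2\UU$ at the end. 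Your assignment of $\bfeta^{\bfh,\bfg}$ to $L^2_x$ and $\bfeta^{\bfh,\bfk}$ to $L^6_x$ is in fact the one consistent with the lemma's asymmetric conclusion $\|\bfg\|_{L^2_x}\|\bfk\|_{L^6_x}$.
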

\begin{proof}
Differentiating \eqref{eq:DU} again with respect to $\bfh$ (in direction $\bfk\in L^2(\mt)$) gives
\begin{align}\label{eq:D2U}
D^2\UU(t,\bfh)(\bfg,\bfk)=\E[D\varphi(\bfu(t,\bfh))\, \bfzeta^{\bfh,\bfg,\bfk}(t)]+\E[D^2\varphi(\bfu(t,\bfh))(\bfeta^{\bfh,\bfg}(t),\bfeta^{\bfh,\bfk}(t))],
\end{align}
where $\bfzeta^{\bfh,\bfg,\bfk}$ solves
\begin{align}\label{eq:3003'}
\begin{aligned}
\dd \bfzeta^{\bfh,\bfg,\bfk}&=\big(\Delta \bfzeta^{\bfh,\bfg,\bfk}-6u\bfeta^{\bfh,\bfg}\bfeta^{\bfh,\bfk}-f'(u)\,\bfzeta^{\bfh,\bfg,\bfk}\big)\dt\\
&\quad+D^2\Phi(\bfu)(\bfeta^{\bfh,\bfg},\bfeta^{\bfh,\bfk})\,\dd W
+D\Phi(\bfu)\bfzeta^{\bfh,\bfg,\bfk}\,\dd W,
\end{aligned}
\end{align}
with $\bfzeta^{\bfh,\bfg,\bfk}(0)=0$, which can be seen from  differentiating \eqref{eq:3003}. Note that the second term in \eqref{eq:D2U} can be estimated by means of \eqref{eq:1102}. In order to estimate the first one,
we apply It\^{o}'s formula to \eqref{eq:3003'} yielding
\begin{align}\label{eq:3003''}
\begin{aligned}
\frac{1}{2}&\|\bfzeta^{\bfh,\bfg,\bfk}(t)\|_{L^2_x}^2+\int_0^t\|\nabla\bfzeta^{\bfh,\bfg,\bfk}(s)\|_{L^2_x}^2\ds\\&=-\int_0^t\int_{\mt}6u\,\bfeta^{\bfh,\bfg}\,\bfeta^{\bfh,\bfk}(s)\,\bfzeta^{\bfh,\bfg,\bfk}(s)\dxs-\int_0^t\int_{\mt}f'(\bfu(s))\,|\bfzeta^{\bfh,\bfg,\bfk}(s)|^2\dxs\\
&\quad+\int_{\mt}\int_0^t\bfzeta^{\bfh,\bfg,\bfk}D^2\Phi(\bfu)(\bfeta^{\bfg,\bfk},\bfeta^{\bfh,\bfk})\,\dd W\dx+\int_{\mt}\int_0^t\bfzeta^{\bfh,\bfg,\bfk}D\Phi(\bfu)\bfzeta^{\bfh,\bfg,\bfk}\,\dd W\dx\\&\quad+\frac{1}{2}\int_0^t\|D\Phi(\bfu)\bfzeta^{\bfh,\bfg,\bfk}(s)\|_{L_2(\mathfrak U;L^2_x)}^2\ds+\frac{1}{2}\int_0^t\|D^2\Phi(\bfu(s))(\bfeta^{\bfh,\bfg}(s),\bfeta^{\bfh,\bfk}(s))\|_{L_2(\mathfrak U;L^2_x)}^2\ds\\
&\quad+\sum_{i\geq1}\int_0^t\int_{\mt}D^2\Phi(\bfu)(\bfeta^{\bfh,\bfg}(s),\bfeta^{\bfh,\bfk})(s)e_i\,D\Phi(\bfu(s))\bfzeta^{\bfh,\bfg,\bfk}(s)e_i\dx\ds.
\end{aligned}
\end{align}
By H\"older's inequality we can bound the last line by the second last one, whereas the stochastic integrals have zero expectation and thus can be ignored. By \eqref{eq:phi} we have
\begin{align*}
\frac{1}{2}\int_0^t\|D\Phi(\bfu)\bfzeta^{\bfh,\bfg,\bfk}(s)\|_{L_2(\mathfrak U;L^2_x)}^2\ds\leq\,c\int_0^t\|\bfzeta^{\bfh,\bfg,\bfk}(s)\|_{L^2_x}^2\ds.
\end{align*}
Clearly, it also holds that
\begin{align*}
-\int_0^t\int_{\mt}f'(\bfu(s))\,|\bfzeta^{\bfh,\bfg,\bfk}(s)|^2\dxs\leq\,\int_0^t\|\bfzeta^{\bfh,\bfg,\bfk}(s)\|_{L^2_x}^2\ds.
\end{align*}
The remaining two terms are more complicated. First of all,
\eqref{eq:phi2} yields
\begin{align*}
\frac{1}{2}\int_0^t&\|D^2\Phi(\bfu(s))(\bfeta^{\bfh,\bfg}(s),\bfeta^{\bfh,\bfk}(s))\|_{L_2(\mathfrak U;L^2_x)}^2\ds\\&\leq\,c\int_0^t\|\bfeta^{\bfh,\bfg}(s)\bfeta^{\bfh,\bfk}(s)\|^2_{L^2_x}\ds\\
&\leq\,c\int_0^t\Big(\|\bfeta^{\bfh,\bfg}(s)\|_{L^{10/3}_x}^{10/3}+\|\bfeta^{\bfh,\bfk}(s)\|_{L^5_x}^5\Big)\ds\\
&\leq\,c\int_0^t\Big(\|\bfeta^{\bfh,\bfg}(s)\|_{L^{10/3}_x}^{10/3}+\|\bfeta^{\bfh,\bfk}(s)\|_{L^6_x}^6+1\Big)\ds,
\end{align*}
the expectation of which can be controlled by $\|\bfg\|_{L^2_x}$ and $\|\bfk\|_{L^6_x}$ using \eqref{eq:1102a}.
The most critical term is
\begin{align*}
\int_0^t\int_{\mt}&u(s)\,\bfeta^{\bfh,\bfg}(s)\,\bfeta^{\bfh,\bfk}\,\bfzeta^{\bfh,\bfg,\bfk}(s)\dxs\\&=\int_0^t\int_{\mt}\A^{-1/2}\big(\bfu(s)\,\bfeta^{\bfh,\bfg}(s)\,\bfeta^{\bfh,\bfk}(s)\big)\,\A^{1/2}\bfzeta^{\bfh,\bfg,\bfk}(s)\dxs\\
&\leq\,\kappa\int_0^t\|\nabla\bfzeta^{\bfh,\bfg,\bfk}(s)\|^2_{L^2_x}\ds+c(\kappa)\int_0^t\|\bfu(s)\,\bfeta^{\bfh,\bfg}(s)\,\bfeta^{\bfh,\bfk}(s)\|_{W^{-1,2}}^2\ds.
\end{align*}
We can absorb the first term, whereas using the embedding $L^{6/5}_x\hookrightarrow W^{-1,2}_x$ the second one is bounded by
\begin{align*}
\int_0^t\|\bfu(s)&\,\bfeta^{\bfh,\bfg}(s)\,\bfeta^{\bfh,\bfk}(s)\|_{L^{6/5}_x}^2\ds\\&\leq \int_0^t\|\bfeta^{\bfh,\bfk}\|_{L^{2}_x}^2\|\bfeta^{\bfh,\bfg}(s)\|_{L^{6}_x}^2\|\bfu(s)\|_{L^{6}_x}^2\ds\\
&\leq \int_0^t\|\bfeta^{\bfh,\bfk}(s)\|_{L^{2}_x}^6\ds+\int_0^t\|\bfeta^{\bfh,\bfg}(s)\|_{L^{6}_x}^6\ds+\int_0^t\Big(1+\|\bfu(s)\|_{L^{6}_x}^{6}\Big).\ds\end{align*}
Hence we obtain from \eqref{lem:} (with $p=6$) and \eqref{eq:1102}
\begin{align*}
\E\bigg[\int_0^t\|\bfu(s)\,\bfeta^{\bfh,\bfg}(s)\,\bfeta^{\bfh,\bfk}(s)\|_{L^{6/5}_x}^2\ds\bigg]
&\leq \,c\Big(\|\bfk\|_{L^{2}_x}^6+\|\bfg\|_{L^{6}_x}^6+1 \Big).
\end{align*}
We conclude for \eqref{eq:3003''} that
 \begin{align}\label{eq:1505}
\E\big[\|\bfzeta^{\bfh,\bfg,\bfk}(t)\|_{L^2_x}^2\big]+\E\bigg[\int_0^t\|\nabla\bfzeta^{\bfh,\bfg,\bfk}(s)\|_{L^2_x}^2\ds\bigg]\leq \,c\Big(\|\bfg\|_{L^{2}_x},\|\bfk\|_{L^{6}_x}\Big),
\end{align}
which finishes the proof by bilinearity of $D^2\UU(t,\bfh)$.
\end{proof}
Estimate \eqref{eq:1505} above can be strengthened to 
 \begin{align}\label{eq:1505b}
\E\big[\|\bfzeta^{\bfh,\bfg,\bfk}(t)\|_{L^2_x}^q\big]+\E\bigg[\int_0^t\|\nabla\bfzeta^{\bfh,\bfg,\bfk}(s)\|_{L^2_x}^2\ds\bigg]^{\frac{q}{2}}\leq \,c\Big(q,\|\bfg\|_{L^{2}_x},\|\bfk\|_{L^{6}_x}\Big)
\end{align}
for any $q\geq 2$: By
taking the power $q/2$ in \eqref{eq:3003''}, the only difference is that the stochastic integrals do not vanish and must be estimated. Using Burkholder-Davis-Gundy inequality and \eqref{eq:1102} we obtain the upper bounds (assuming that $q\geq4$)
\begin{align*}
\E\bigg[&\bigg(\int_0^T\sum_{i\geq 1}\bigg(\int_{\mt}\zeta^{\bfh,\bfg,\bfk}(s)D^2\Phi(u(s))e_i(\eta^{\bfg,\bfk}(s),\eta^{\bfg,\bfk}(s))\dx\bigg)^2\ds\bigg)^{\frac{q}{4}}\bigg]\\
&\leq \,c\,\E\bigg[\bigg(\int_0^T\|\zeta^{\bfh,\bfg,\bfk}(s)\|_{L^{10/3}_x}^2\|\bfeta^{\bfh,\bfg}(s)\|_{L^2_x}^2\|\bfeta^{\bfh,\bfk}(s)\|_{L^5_x}^2\ds\bigg)^{\frac{q}{4}}\bigg]\\
&\leq \,\kappa\,\E\bigg[\bigg(\int_0^T\|\zeta^{\bfh,\bfg,\bfk}(s)\|_{L^{10/3}_x}^{10/3}\ds\bigg)^{\frac{q}{4}}\bigg]+c(\kappa)\,\E\bigg[\bigg(\int_0^T\big(\|\bfeta^{\bfh,\bfg}(s)\|_{L^2_x}^{10}+\|\bfeta^{\bfh,\bfk}(s)\|_{L^6_x}^{10}\big)\ds\bigg)^{\frac{q}{4}}\bigg]\\
&\leq\,c\kappa\,\E\big[\|\bfzeta^{\bfh,\bfg,\bfk}(s)\|_{L^{10/3}(0,T;L^{10/3}_{x})}^q\big]+\,c\big(\kappa,\|\bfg\|_{L^2_x},\|\bfk\|_{L^6_x}\big),
\end{align*}
as well as
\begin{align*}
\E\bigg[\bigg(\int_0^T&\sum_{i\geq 1}\bigg(\int_{\mt}\zeta^{\bfh,\bfg,\bfk}(s)D\Phi(u(s))e_i\zeta^{\bfh,\bfg,\bfk}(s)\dx\bigg)^2\ds\bigg)^{\frac{q}{4}}\bigg]\\&\leq \,c\,\E\bigg[\bigg(\int_0^T\|\bfzeta^{\bfh,\bfg,\bfk}(s)\|_{L^2_x}^4\ds\bigg)^{\frac{q}{4}}\bigg]\leq \,c\,\E\bigg[\int_0^T\|\bfzeta^{\bfh,\bfg,\bfk}(s)\|_{L^2_x}^q\ds\bigg],
\end{align*}
which can be handled by Gronwall's lemma. By a parabolic interpolation we obtain \eqref{eq:1505b} as well as
 \begin{align}\label{eq:1505c}
\E\Big[\|\bfzeta^{\bfh,\bfg,\bfk}\|_{L^{10/3}(0,T;L^{10/3}_{x})}^{q}\Big]\leq \,c\big(q,\|\bfg\|_{L^{2}_x},\|\bfk\|_{L^{6}_x}\big)
\end{align}
for any $q\geq 2$.

\begin{lemma}\label{lem:kol3}
Let $\varphi\in C^3_b(L^2(\mt))$ and suppose that \ref{N1} holds. Then we have for $t\in(0,T)$ and $\bfh\in L^2(\mt)$ and $\bfk,\bfg,\bfl\in L^6(\mt)$
\begin{align*}
|D^3\UU(t,\bfh)(\bfg,\bfk,\bfl)|\leq\,c(\varphi,T)\|\bfk\|_{L^6_x}\|\bfg\|_{L^6_x}\|\bfl\|_{L^6_x}.
\end{align*}
\end{lemma}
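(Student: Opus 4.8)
The plan is to iterate the scheme of Lemmas \ref{lem:kol1} and \ref{lem:kol2} one more step. Differentiating the representation \eqref{eq:D2U} of $D^2\UU$ once more, in a direction $\bfl\in L^6(\mt)$, one obtains
\begin{align*}
D^3\UU(t,\bfh)(\bfg,\bfk,\bfl)&=\E\big[D\varphi(\bfu(t,\bfh))\,\vartheta(t)\big]+\E\big[D^3\varphi(\bfu(t,\bfh))(\bfeta^{\bfh,\bfg}(t),\bfeta^{\bfh,\bfk}(t),\bfeta^{\bfh,\bfl}(t))\big]\\
&\quad+\E\big[D^2\varphi(\bfu(t,\bfh))(\bfeta^{\bfh,\bfl}(t),\bfzeta^{\bfh,\bfg,\bfk}(t))\big]+\E\big[D^2\varphi(\bfu(t,\bfh))(\bfeta^{\bfh,\bfk}(t),\bfzeta^{\bfh,\bfg,\bfl}(t))\big]\\
&\quad+\E\big[D^2\varphi(\bfu(t,\bfh))(\bfeta^{\bfh,\bfg}(t),\bfzeta^{\bfh,\bfk,\bfl}(t))\big],
\end{align*}
where $\vartheta=\vartheta^{\bfh,\bfg,\bfk,\bfl}$ denotes the third order variation process, i.e.\ the derivative of $\bfzeta^{\bfh,\bfg,\bfk}$ with respect to $\bfh$ in direction $\bfl$. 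Since $D^2\varphi$ and $D^3\varphi$ are bounded multilinear forms on $L^2(\mt)$, the last four terms are estimated directly by H\"older's inequality together with the moment bounds \eqref{eq:1102a} for $\bfeta^{\bfh,\cdot}$ and \eqref{eq:1505b} for $\bfzeta^{\bfh,\cdot,\cdot}$; using the bi-/tri-linearity of the variation processes in their direction arguments and the embedding $L^6_x\hookrightarrow L^2_x$, each of them is bounded by $c(\varphi,T)\|\bfg\|_{L^6_x}\|\bfk\|_{L^6_x}\|\bfl\|_{L^6_x}$. Since $|D\varphi(\bfu(t,\bfh))\,\vartheta(t)|\leq c(\varphi)\|\vartheta(t)\|_{L^2_x}$, it therefore suffices to prove $\E\big[\|\vartheta(t)\|_{L^2_x}\big]\leq c(\varphi,T)\|\bfg\|_{L^6_x}\|\bfk\|_{L^6_x}\|\bfl\|_{L^6_x}$.

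To this end I would differentiate the equation \eqref{eq:3003'} for $\bfzeta^{\bfh,\bfg,\bfk}$ with respect to $\bfh$ in direction $\bfl$, which shows that $\vartheta$ solves
\begin{align*}
\dd\vartheta&=\Big(\Delta\vartheta-f'(\bfu)\vartheta-6\,\bfeta^{\bfh,\bfg}\bfeta^{\bfh,\bfk}\bfeta^{\bfh,\bfl}-6\,\bfu\big(\bfzeta^{\bfh,\bfg,\bfl}\bfeta^{\bfh,\bfk}+\bfeta^{\bfh,\bfg}\bfzeta^{\bfh,\bfk,\bfl}+\bfeta^{\bfh,\bfl}\bfzeta^{\bfh,\bfg,\bfk}\big)\Big)\dt\\
&\quad+D\Phi(\bfu)\vartheta\,\dd W+D^3\Phi(\bfu)\big(\bfeta^{\bfh,\bfg},\bfeta^{\bfh,\bfk},\bfeta^{\bfh,\bfl}\big)\,\dd W\\
&\quad+\Big(D^2\Phi(\bfu)\big(\bfzeta^{\bfh,\bfg,\bfl},\bfeta^{\bfh,\bfk}\big)+D^2\Phi(\bfu)\big(\bfeta^{\bfh,\bfg},\bfzeta^{\bfh,\bfk,\bfl}\big)+D^2\Phi(\bfu)\big(\bfeta^{\bfh,\bfl},\bfzeta^{\bfh,\bfg,\bfk}\big)\Big)\,\dd W,
\end{align*}
with $\vartheta(0)=0$, and then apply It\^{o}'s formula to $t\mapsto\tfrac12\|\vartheta(t)\|_{L^2_x}^2$, following verbatim the computation \eqref{eq:3003''} for $\bfzeta^{\bfh,\bfg,\bfk}$. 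The diffusion term contributes $-\|\nabla\vartheta\|_{L^2_x}^2$; the one-sided Lipschitz bound $f'\geq-1$ turns $-\langle f'(\bfu)\vartheta,\vartheta\rangle_{L^2_x}$ into a term $\leq\|\vartheta\|_{L^2_x}^2$; the It\^{o} correction coming from $D\Phi(\bfu)\vartheta$ is $\leq c\|\vartheta\|_{L^2_x}^2$ by \eqref{eq:phi}; and all It\^{o} integrals vanish in expectation. The genuinely first-order cubic term is controlled by $|\langle\bfeta^{\bfh,\bfg}\bfeta^{\bfh,\bfk}\bfeta^{\bfh,\bfl},\vartheta\rangle_{L^2_x}|\leq\|\bfeta^{\bfh,\bfg}\|_{L^6_x}\|\bfeta^{\bfh,\bfk}\|_{L^6_x}\|\bfeta^{\bfh,\bfl}\|_{L^6_x}\|\vartheta\|_{L^2_x}$ followed by Young's inequality, and the It\^{o} correction of $D^3\Phi(\bfu)(\bfeta^{\bfh,\bfg},\bfeta^{\bfh,\bfk},\bfeta^{\bfh,\bfl})$ is handled in exactly the same way using \eqref{eq:phi3c}; after taking expectations these produce $c\,\E\int_0^t\|\bfeta^{\bfh,\bfg}\|_{L^6_x}^2\|\bfeta^{\bfh,\bfk}\|_{L^6_x}^2\|\bfeta^{\bfh,\bfl}\|_{L^6_x}^2\ds\leq c\|\bfg\|_{L^6_x}^2\|\bfk\|_{L^6_x}^2\|\bfl\|_{L^6_x}^2$ by H\"older's inequality in $(\omega,s)$ and \eqref{eq:1102a} with $q=6$.

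The delicate contributions, and the step I expect to be the main obstacle, are the mixed drift terms $\bfu\,\bfzeta^{\bfh,\cdot,\cdot}\bfeta^{\bfh,\cdot}$ together with the noise corrections $D^2\Phi(\bfu)(\bfzeta^{\bfh,\cdot,\cdot},\bfeta^{\bfh,\cdot})$: one only has $\bfzeta^{\bfh,\cdot,\cdot}$ in $L^\infty(0,T;L^2_x)$ and, via parabolic interpolation, in $L^{10/3}(0,T;L^{10/3}_x)$, but in no $L^6_x$-based space, so the available integrability must be distributed carefully among the factors against the single gradient term $\|\nabla\vartheta\|_{L^2_x}^2$ available for absorption. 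For the drift terms I would proceed exactly as for the ``most critical term'' in the proof of Lemma \ref{lem:kol2}, writing e.g.\ $\langle\bfu\,\bfzeta^{\bfh,\bfg,\bfl}\bfeta^{\bfh,\bfk},\vartheta\rangle_{L^2_x}=\langle\A^{-1/2}(\bfu\,\bfzeta^{\bfh,\bfg,\bfl}\bfeta^{\bfh,\bfk}),\A^{1/2}\vartheta\rangle_{L^2_x}$, absorbing $\kappa\|\nabla\vartheta\|_{L^2_x}^2$ on the left-hand side, and bounding the remainder through $L^{6/5}_x\hookrightarrow W^{-1,2}_x$ and $\|\bfu\,\bfzeta^{\bfh,\bfg,\bfl}\bfeta^{\bfh,\bfk}\|_{L^{6/5}_x}\leq c\|\bfu\|_{L^6_x}\|\bfzeta^{\bfh,\bfg,\bfl}\|_{L^2_x}\|\bfeta^{\bfh,\bfk}\|_{L^6_x}$; for the $D^2\Phi$ corrections I would use \eqref{eq:phi2c} together with a Gagliardo--Nirenberg interpolation of $\|\bfzeta^{\bfh,\bfg,\bfl}\|_{L^4_x}$ between $\|\bfzeta^{\bfh,\bfg,\bfl}\|_{L^2_x}$ and $\|\nabla\bfzeta^{\bfh,\bfg,\bfl}\|_{L^2_x}$. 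After taking expectations, all resulting $s$-integrals are controlled by: the energy/moment bounds for $\bfu$ in $L^\infty(0,T;L^6_x)$ (as invoked for Lemma \ref{lem:kol2}, cf.\ \eqref{lem:B1}); the bounds \eqref{eq:1102a}, \eqref{eq:1102} for $\bfeta^{\bfh,\cdot}$; and the bounds \eqref{eq:1505b}, \eqref{eq:1505c} for $\bfzeta^{\bfh,\cdot,\cdot}$ (the latter also giving moments of $\|\nabla\bfzeta^{\bfh,\cdot,\cdot}\|_{L^2_x}$ in $L^2_t$); combined with H\"older's inequality in $(\omega,s)$ and the multilinearity in the direction arguments, so that every $\|\cdot\|_{L^2_x}$ may be replaced by $\|\cdot\|_{L^6_x}$ and every needed sixth moment is available. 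Choosing $\kappa$ small enough to absorb the gradient terms and applying Gronwall's lemma then yields $\E[\|\vartheta(t)\|_{L^2_x}^2]\leq c(\varphi,T)\|\bfg\|_{L^6_x}^2\|\bfk\|_{L^6_x}^2\|\bfl\|_{L^6_x}^2$, whence the claim; the rigorous justification proceeds, as for Lemmas \ref{lem:kol1}--\ref{lem:kol2}, by a finite-dimensional Galerkin approximation with estimates uniform in the Galerkin parameter and a truncation of $z\mapsto z^2$ before It\^{o}'s formula.
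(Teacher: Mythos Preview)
Your proposal is correct and follows essentially the same approach as the paper: differentiate \eqref{eq:D2U} to obtain the representation of $D^3\UU$ with one term involving the third variation process and several lower-order terms handled by \eqref{eq:1102a} and \eqref{eq:1505b}; derive the SDE for the third variation (the paper calls it $\bfxi^{\bfh,\bfg,\bfk,\bfl}$), apply It\^o's formula to $\tfrac12\|\cdot\|_{L^2_x}^2$, use $f'\geq -1$, absorb $\kappa\|\nabla\vartheta\|_{L^2_x}^2$, and control the mixed drift terms $\bfu\,\bfzeta\,\bfeta$ via the $W^{-1,2}_x$/$L^{6/5}_x$ duality trick exactly as in Lemma~\ref{lem:kol2}. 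The only cosmetic differences are that the paper treats the $\bfeta\bfeta\bfeta$ drift term uniformly with the $W^{-1,2}_x$ trick (you pair it against $\|\vartheta\|_{L^2_x}$ directly, which is equally valid), and for the $D^2\Phi(\bfzeta,\bfeta)$ It\^o corrections the paper uses the H\"older splitting $\|\bfzeta\bfeta\|_{L^2_x}\leq\|\bfzeta\|_{L^{10/3}_x}\|\bfeta\|_{L^5_x}$ together with the parabolic bound \eqref{eq:1505c}, rather than your Gagliardo--Nirenberg interpolation of $\|\bfzeta\|_{L^4_x}$; both routes close.
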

\begin{proof}
Differentiating \eqref{eq:D2U} again with respect to $\bfh$ (in direction $\bfl\in L^2(\mt)$) gives
\begin{align}\label{eq:D3U}
\begin{aligned}
D^3\UU(t,\bfh)(\bfg,\bfk,\bfl)&=\E[D\varphi(\bfu(t,\bfh))\, \bfxi^{\bfh,\bfg,\bfk,\bfl}(t)]+\E[D^2\varphi(\bfu(t,\bfh))(\bfzeta^{\bfh,\bfg,\bfl}(t),\bfeta^{\bfh,\bfk}(t))]\\&+\E[D^2\varphi(\bfu(t,\bfh))(\bfeta^{\bfh,\bfg}(t),\bfzeta^{\bfh,\bfk,\bfl}(t))]\\&+\E[D^3\varphi(\bfu(t,\bfh))(\bfeta^{\bfh,\bfg}(t),\bfeta^{\bfh,\bfk}(t),\bfeta^{\bfh,\bfl}(t))],
\end{aligned}
\end{align}
where (by differentiating \eqref{eq:3003'})
\begin{align}\label{eq:3003'b}
\begin{aligned}
\dd \bfxi^{\bfh,\bfg,\bfk,\bfl}&=\big(\Delta \bfxi^{\bfh,\bfg,\bfk,\bfl}-6\bfeta^{\bfh,\bfl}\bfeta^{\bfh,\bfg}\bfeta^{\bfh,\bfk}-6\bfu\bfzeta^{\bfh,\bfg,\bfl}\bfeta^{\bfh,\bfk}-6\bfu\bfeta^{\bfh,\bfg}\bfzeta^{\bfh,\bfk,\bfl}\big)\dt\\
&-\big(f'(\bfu)\,\bfxi^{\bfh,\bfg,\bfk,\bfl}+6u\bfeta^{\bfh,\bfl}\bfzeta^{\bfh,\bfg,\bfk}\big)\dt\\
&+D^3\Phi(\bfu)(\bfeta^{\bfh,\bfl},\bfeta^{\bfh,\bfg},\bfeta^{\bfh,\bfk})\,\dd W+D^2\Phi(\bfu)(\bfeta^{\bfh,\bfl},\bfzeta^{\bfh,\bfg,\bfk}\,)\dd W
+D\Phi(\bfu)\bfxi^{\bfh,\bfg,\bfk,\bfl}\,\dd W\\
&+D^2\Phi(\bfu)(\bfzeta^{\bfh,\bfg,\bfl},\bfeta^{\bfh,\bfk})\,\dd W+D^2\Phi(\bfu)(\bfeta^{\bfh,\bfg},\bfzeta^{\bfh,\bfk,\bfl})\,\dd W,
\end{aligned}
\end{align}
with $\bfxi^{\bfh,\bfg,\bfk,\bfl}(0)=0$. The last three terms in \eqref{eq:D3U} can be estimated by means of \eqref{eq:1102} and \eqref{eq:1505b}. So, our focus is on the first one.
We apply now It\^{o}'s formula and argue similarly to the proof of Lemmas \ref{lem:kol1} and \ref{lem:kol2}. The correction terms can be handled as there using \eqref{eq:phi}--\eqref{eq:phi3}. 
For instance, we have
\begin{align*}
\E\bigg[\int_0^t&\|D^2\Phi(\bfu(s))(\bfzeta^{\bfh,\bfg,\bfl}(s),\bfeta^{\bfh,\bfk}(s))\|_{L_2(\mathfrak U;L^2_x)}^2\ds\bigg]\\&\leq\E\bigg[\int_0^t\|\bfzeta^{\bfh,\bfg,\bfl}(s)\bfeta^{\bfh,\bfk}(s)\|^2_{L^2_x}\ds\bigg]\\
&\leq\E\bigg[\int_0^t\Big(\|\bfzeta^{\bfh,\bfg,\bfl}(s)\|_{L^{10/3}_x}^{10/3}+\|\bfeta^{\bfh,\bfk}(s)\|_{L^5_x}^5\Big)\ds\bigg]\\
&\leq\E\bigg[\int_0^t\Big(\|\bfzeta^{\bfh,\bfg,\bfl}(s)\|_{L^{10/3}_x}^{10/3}+\|\bfeta^{\bfh,\bfk}(s)\|^6_{L^6_x}+1\Big)\ds\bigg]\\
&\leq\,c\big(q,\|\bfg\|_{L^{2}_x},\|\bfk\|_{L^{6}_x},\|\bfl\|_{L^{6}_x}\big)
\end{align*}
using \eqref{eq:1102} and \eqref{eq:1505c}.
We clearly have
\begin{align*}
-\int_0^t\int_{\mt}f'(u)|\bfxi^{\bfh,\bfg,\bfk,\bfl}(s)|^2\dxs\leq\int_0^t\|\bfxi^{\bfh,\bfg,\bfk,\bfl}(s)\|^2_{L^2_x}\ds.
\end{align*}
Let us now focus on the remaining terms arising from the nonlinearity being more critical. It holds
\begin{align*}
-\int_0^t\int_{\mt}&6\bfeta^{\bfh,\bfl}(s)\bfeta^{\bfh,\bfg}(s)\bfeta^{\bfh,\bfk}(s)\bfxi^{\bfh,\bfg,\bfk,\bfl}(s)\dxs\\&\leq\,c(\kappa)\int_0^t\|\bfeta^{\bfh,\bfl}(s)\bfeta^{\bfh,\bfg}(s)\bfeta^{\bfh,\bfk}(s)\|_{W^{-1,2}_x}^2\ds+\kappa\int_0^t\|\bfxi^{\bfh,\bfg,\bfk,\bfl}(s)\|^2_{W^{1,2}_x}\ds,\\
-\int_0^t\int_{\mt}&6\bfu(s)\bfzeta^{\bfh,\bfg,\bfl}(s)\bfeta^{\bfh,\bfk}(s)\bfxi^{\bfh,\bfg,\bfk,\bfl}(s)\dxs\\&\leq\,c(\kappa)\int_0^t\|6u(s)\bfzeta^{\bfh,\bfg,\bfl}(s)\bfeta^{\bfh,\bfk}(s)\|_{W^{-1,2}_x}^2\ds+\kappa\int_0^t\|\bfxi^{\bfh,\bfg,\bfk,\bfl}(s)\|^2_{W^{1,2}_x}\ds,\\
-\int_0^t\int_{\mt}&6\bfu(s)\bfeta^{\bfh,\bfg}\bfzeta^{\bfh,\bfk,\bfl}(s)\bfxi^{\bfh,\bfg,\bfk,\bfl}(s)\dxs\\&\leq\,c(\kappa)\int_0^t\|6u(s)\bfeta^{\bfh,\bfg}(s)\bfzeta^{\bfh,\bfk,\bfl}(s)\|_{W^{-1,2}_x}^2\ds+\kappa\int_0^t\|\bfxi^{\bfh,\bfg,\bfk,\bfl}(s)\|^2_{W^{1,2}_x}\ds,\\
\int_0^t\int_{\mt}&6u(s)\bfeta^{\bfh,\bfl}(s)\bfzeta^{\bfh,\bfg,\bfk}(s)\bfxi^{\bfh,\bfg,\bfk,\bfl}(s)\dxs\\&\leq\,c(\kappa)\int_0^t\|u(s)\bfeta^{\bfh,\bfl}(s)\bfzeta^{\bfh,\bfg,\bfl}(s)\|_{W^{-1,2}_x}^2\ds+\kappa\int_0^t\|\bfxi^{\bfh,\bfg,\bfk,\bfl}(s)\|^2_{W^{1,2}_x}\ds.
\end{align*}
Estimating the $W^{-1,2}_x$-norm by the $L^2_x$-norm and applying H\"older's inequality, the $c(\kappa)$ terms are controlled (line by line) by the terms
\begin{align*}
&\int_0^t\|\bfeta^{\bfh,\bfl}(s)\|_{L^6_x}^6\ds+\int_{0}^t\|\eta^{\bfh,\bfg}(s)\|_{L^6_x}^6\ds+\int_{0}^t\|\eta^{\bfh,\bfk}(s)\|_{L^6_x}^6\ds,\\
&\int_0^t\|u(s)\|_{L^{6}_x}^{6}\ds+\int_{0}^t\|\eta^{\bfh,\bfl}(s)\|_{L^6_x}^6\ds+\int_{0}^t\|\zeta^{\bfh,\bfg,\bfl}(s)\|_{L^2_x}^6\ds,\\
&\int_0^t\|u(s)\|_{L^{6}_x}^{6}\ds+\int_{0}^t\|\eta^{\bfh,\bfg}(s)\|_{L^6_x}^6\ds+\int_{0}^t\|\zeta^{\bfh,\bfk,\bfl}(s)\|_{L^2_x}^6\ds,\\
&\int_0^t\|u(s)\|_{L^{6}_x}^{6}\ds+\int_{0}^t\|\eta^{\bfh,\bfl}(s)\|_{L^6_x}^6\ds+\int_{0}^t\|\zeta^{\bfh,\bfg,\bfl}(s)\|_{L^2_x}^6\ds.
\end{align*}
Using \eqref{eq:1102} and \eqref{eq:1505b} their expectations are bounded by the $L^6_x$-norms of $\bfg,\bfk$ and $\bfl$. 
\end{proof}


\section{Weak first Order Convergence Rate}\label{sec:error}
This section is the heart of the paper and is dedicated to the proof of following theorem, which establishes an optimal weak error rate for the time discretisation \eqref{tdiscrA}.
\begin{theorem}\label{thm:main}
 Let $(\Omega,\mf,(\mf_t)_{t\geq0},\prst)$ be a given stochastic basis with a complete right-con\-ti\-nuous filtration and an $(\mf_t)$-cylindrical Wiener process $W$.
Suppose that $\bfu_0\in W^{3,2}(\mt)$ and that $\Phi$ satisfies \ref{N1} if $d=1,2$, and \ref{N2} if $d=3$. Let $\bfu$ be the unique pathwise solution to \eqref{eq:SNS} in the sense of Definition \eqref{def:inc2d}
and let $(\bfu_m)_{m=1}^M$ be the solution
to \eqref{tdiscrA}. For any $\varphi\in C^2_b(L^2(\mt))$ we have
\begin{align}\label{eq:main}
\big|\E\big[\varphi(\bfu(T))-\varphi({\bfu}_M)\big]\big|\leq\,c\tau,
\end{align}
where $c>0$ depends on $\varphi,\bfu_0,T$ and $\Phi$.
\end{theorem}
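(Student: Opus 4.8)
The plan is to carry out the Kolmogorov‑equation based weak error analysis, adapted to the fully implicit scheme \eqref{tdiscrA} via the $(\mathfrak F_t)$‑adapted interpolant $\bfu_\tau$ of \eqref{eq:utau} and its It\^o representation \eqref{tdiscr'}. Write $\UU(t,\bfh)=\E[\varphi(\bfu(t,\bfh))]$; since $\bfu_0\in W^{3,2}(\mt)$ is deterministic, $\UU$ solves \eqref{eq:Kolm'} and satisfies the derivative bounds of Lemmas \ref{lem:kol1}--\ref{lem:kol3} (for which I momentarily assume $\varphi\in C^3_b(L^2_x)$; the general case is recovered at the end by approximation). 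Using $\bfu_\tau(0)=\bfu_0$, $\bfu_\tau(T)=\bfu_M$, $\UU(0,\cdot)=\varphi$ and $\UU(T,\bfu_0)=\E[\varphi(\bfu(T))]$, I telescope
\[
\E\big[\varphi(\bfu_M)-\varphi(\bfu(T))\big]=\sum_{m=1}^M\E\Big[\UU\big(T-t_m,\bfu_\tau(t_m)\big)-\UU\big(T-t_{m-1},\bfu_\tau(t_{m-1})\big)\Big],
\]
and on each $[t_{m-1},t_m]$ apply It\^o's formula to $s\mapsto\UU(T-s,\bfu_\tau(s))$, reading the drift $b_\tau(s)=\tfrac1\tau(D\mathcal T_\tau(\bfU_\tau(s))-\id)\bfu_{m-1}+\tfrac12\sum_k D^2\mathcal T_\tau(\bfU_\tau(s))(\Phi(\bfu_{m-1})e_k,\Phi(\bfu_{m-1})e_k)$ and the diffusion $\sigma_\tau(s)=D\mathcal T_\tau(\bfU_\tau(s))\Phi(\bfu_{m-1})$ of $\bfu_\tau$ off \eqref{tdiscr'}.

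Substituting $\partial_t\UU$ from the Kolmogorov equation \eqref{eq:Kolm'} and noting that the stochastic integral term $\int\langle D\UU(T-s,\bfu_\tau(s)),\sigma_\tau(s)\,\dd W\rangle_{L^2_x}$ has vanishing expectation (adapted integrand), there remains
\[
\E\big[\varphi(\bfu_M)-\varphi(\bfu(T))\big]=\sum_{m=1}^M\int_{t_{m-1}}^{t_m}\E\big[\Lambda_1(s)+\Lambda_2(s)\big]\,\dd s,
\]
with the \emph{drift mismatch} $\Lambda_1(s)=\langle D\UU(T-s,\bfu_\tau(s)),\,b_\tau(s)+D\mathcal E(\bfu_\tau(s))\rangle_{L^2_x}$ (recall $-D\mathcal E$ is the drift of \eqref{eq:SNS}) and the \emph{diffusion mismatch} $\Lambda_2(s)=\tfrac12\sum_k D^2\UU(T-s,\bfu_\tau(s))\big((\sigma_\tau(s)-\Phi(\bfu_\tau(s)))e_k,(\sigma_\tau(s)+\Phi(\bfu_\tau(s)))e_k\big)$. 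The aim is that every subinterval contribute $O(\tau^2)$ — or, for a few terms, $O(\tau^2(T-t_{m-1})^{-1/2})$, still summing to $O(\tau)$ over the $M=T/\tau$ subintervals.

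For $\Lambda_2$ I split $\sigma_\tau-\Phi(\bfu_\tau(s))=(D\mathcal T_\tau(\bfU_\tau(s))-\id)\Phi(\bfu_{m-1})+(\Phi(\bfu_{m-1})-\Phi(\bfu_\tau(s)))$: the first summand is directly $O(\tau)$ by \eqref{eq:T-id}, the uniform moment bounds \eqref{eq:1102b}--\eqref{eq:1102d} together with Lemma \ref{lemma:3.1}, and $\|D^2\UU\|$ from Lemma \ref{lem:kol2}; the second is only $O(\sqrt\tau)$ but of the form (martingale increment)$+O(\tau)$, so I subtract its value at $\bfu_{m-1}$ (which pairs to zero against the increment) and estimate the difference of the $D^2\UU$‑factors by Lemma \ref{lem:kol3}, gaining the missing $\sqrt\tau$ — equivalently one may process this term with the Malliavin integration‑by‑parts formula \eqref{eq:malpart}, whose innermost derivative again invokes $D^3\UU$. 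The term $\Lambda_1$ is not pointwise small (it contains $D\mathcal E(\bfu_{m-1})$‑type quantities of order one); here I freeze $D\UU$ at $(T-t_{m-1},\bfu_{m-1})$ and exploit the scheme identity $\bfu_m-\bfu_{m-1}+\tau D\mathcal E(\bfu_m)=\Phi(\bfu_{m-1})\Delta_mW$ together with $\int_{t_{m-1}}^{t_m}b_\tau(s)\,\dd s=\bfu_m-\bfu_{m-1}-\int_{t_{m-1}}^{t_m}\sigma_\tau\,\dd W$, so that $\int_{t_{m-1}}^{t_m}(b_\tau(s)+D\mathcal E(\bfu_\tau(s)))\,\dd s$ decomposes into an $(\mathfrak F_{t_{m-1}})$‑conditionally centred stochastic integral (annihilated by the frozen $D\UU$), an $O(\tau^2)$ remainder controlled by $D^2\mathcal T_\tau=O(\tau)$ from \eqref{eq:D2T} and $\|\mathcal T_\tau-\id\|=O(\tau)$ from Corollary \ref{coro-1}, and $\int_{t_{m-1}}^{t_m}(D\mathcal E(\bfu_\tau(s))-D\mathcal E(\bfu_m))\,\dd s$, which is handled by further centring and It\^o isometry/\eqref{eq:malpart}; the leftover ``variation of $D\UU$'' contribution is treated by the It\^o product formula using $\|D^2\UU\|$ and $\|D^3\UU\|$. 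The bulk of the work is the regularity bookkeeping — keeping every norm within $L^2_x$, $L^6_x$ and at most $W^{3,2}_x$, for which the discrete bounds of Lemma \ref{lemma:3.1} and the mapping properties of $\mathcal T_\tau,D\mathcal T_\tau,D^2\mathcal T_\tau$ from Section \ref{sec:prep} are exactly calibrated. Summing over $m$ yields \eqref{eq:main}, and the passage from $\varphi\in C^3_b$ to $\varphi\in C^2_b$ is by density.

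The main obstacle is twofold. First, because the cubic drift is treated \emph{implicitly}, the interpolant $\bfu_\tau$ carries no explicit nonlinearity: in \eqref{tdiscr'} the term $f$ enters only through $D\mathcal T_\tau$ and $D^2\mathcal T_\tau$, so everything hinges on the near‑identity estimates for these operators in the $L^p_x$, $W^{1,2}_x$ and $W^{2,2}_x$ scales derived in Section \ref{sec:Ttau}, which themselves rest on the one‑sided Lipschitz sign of $f'$. Second, $f(u)=u^3-u$ is not globally Lipschitz, so neither the high‑moment bounds for the iterates (Lemma \ref{lemma:3.1}) nor the bounds on $D\UU,D^2\UU,D^3\UU$ (Lemmas \ref{lem:kol1}--\ref{lem:kol3}) can be obtained by a Lipschitz/Gronwall argument — they must be squeezed out of the monotonicity of $u\mapsto u^3$; it is this that forces the low‑norm framework, and the most delicate points are precisely the frozen‑coefficient mismatch $\Phi(\bfu_{m-1})$ versus $\Phi(\bfu_\tau(s))$ in $\Lambda_2$ and the term $D\mathcal E(\bfu_\tau(s))-D\mathcal E(\bfu_m)$ inside $\Lambda_1$.
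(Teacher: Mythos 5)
Your proposal follows essentially the same route as the paper: telescoping via the Kolmogorov solution $\UU$, It\^o's formula applied to $s\mapsto\UU(T-s,\bfu_\tau(s))$ using the representation \eqref{tdiscr'} of the adapted interpolant, Malliavin integration by parts \eqref{eq:malpart} for the terms pairing stochastic increments with non-adapted factors, the near-identity estimates for $D\mathcal T_\tau$, $D^2\mathcal T_\tau$ from Section \ref{sec:Ttau}, and the discrete stability of Lemma \ref{lemma:3.1}; your ``drift/diffusion mismatch'' grouping is just a coarser packaging of the paper's terms $(\mathrm{I})$--$(\mathrm{VI})$ (the paper additionally isolates the first subinterval, estimating $\E[\|\bfu(\tau)-\bfu_1\|_{L^2_x}]$ directly via the Markov property and Lemma \ref{lem:kol1}). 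The only caveat is your closing density argument from $C^3_b$ to $C^2_b$, which is not immediate since the constants produced by Lemma \ref{lem:kol3} depend on $\|D^3\varphi\|$ --- but this is a point the paper itself glosses over.
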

\begin{remark}\label{remark:noise'}
{\bf 1.}~For $d=1$ and additive {\em white} noise in (\ref{eq:SNS}), a related result has been obtained in \cite[Theorem 3.3]{BrGo} for the implementable splitting scheme 
\cite[(1.1)$_2$]{BrGo}, using bounds for related iterates from \cite[Proposition 3]{BG1} in {\em supremum norm},  whose derivation used a Gronwall argument. The result is obtained under a compatibility condition for white noise and drift operator \cite[Section 2.1.2]{BrGo} whose physical interpretation is not immediate, and crucially exploits the {\em additive} character of noise to accomplish the estimate right before
\cite[Section 2.1.2]{BrGo}. The key part of the weak error analysis then is based on
the Kolmogorov equation \cite[(2.11)]{BrGo}, where the appearing ${\mathscr L}^{(\Delta t)}$ is generator of the semigroup generated by the regularized problem \cite[(2.7)]{BrGo} with solution $X^{(\Delta t)}$ --- rather than for (\ref{eq:SNS}) as we do here; note that this modification also gives rise to a modified energy ${\mathcal E}^{(\Delta t)}$ for the scalar order parameter $u$, if compared to the Helmholtz free energy functional ${\mathcal E}$ in (\ref{ac-1}).
---
In this work, we heavily profited from the {\em tools given in the error analysis} in \cite{BrGo}, as well as \cite{DePr,De,BrDe}.

{\bf 2.} The estimates for the solution of the Kolmogorov equation from Section \ref{sec:kol} are somewhat weaker than those used for the weak error analysis in \cite{BrDe,BrGo,DePr,De} (see also \cite{B,CH,CHS1} for related results). We compensate this by assuming more regularity for the data (initial datum and noise) which results in the higher order estimates for the time-discrete solution from  Section \ref{sec:esttau}. Our analysis in the additive case in Section \ref{sec:add} is also based on higher spatial regularity of the diffusion coefficient.

{\bf 3.}
The reason for the restriction to affine linear noise in Theorem \ref{thm:main} for $d=3$ is only due to
estimate \eqref{lem:3.1c} in Lemma \ref{lemma:3.1} which is heavily used. Apart from that the proof of Theorem \ref{thm:main} does not require this restriction. In particular,
in the 2D case the result also hold for nonlinear noise.

{\bf 4.}
A large part of the analysis in this paper directly extends to the Dirichlet case -- at least if the underlying domain is sufficiently smooth. Only the results from Section  \ref{sec:Ttau} require some additional technical effort: as we are working with nonlinear test functions it is not possible to justify the estimates from Lemma \ref{lem:T} by means of a Galerkin approximation (using eigenfunctions of the Laplace operator) anymore. Instead one has to prove localised estimates then by means of cut-off functions. In order to obtain global estimates one has to parametrise the boundary with local charts, change the coordinates and reflect the transformed solution at the hence obtained flat boundary. The local estimates can then be applied to the reflected transformed solution.
Such a procedure is tedious and technical but standard in literature. A detailed presentation can be found in detail, \emph{e.g.}, in \cite[Section 4]{BCDS}.

{\bf 5.} Large parts of the analysis of this section extend to the case of more general functions $f$ in equation \eqref{eq:SNS} with $q$-growth for some $q\geq2$. However, controlling the terms in \eqref{eq:qinN} requires that the leading part of $f$ is exactly of the form $f(z)=az^q$ with $a>0$ and $q\in\N$.
\end{remark}
The remainder of this section is dedicated to the proof of Theorem \ref{thm:main}, which is split into several subsections corresponding to the estimates of individual error terms.
 
 We start by decomposing the error in several parts which will be analysed in the subsequent subsections. Let $\bfu$ be the solution of \eqref{eq:SNS} and $(\bfu_m)_{m=1}^M$ be the solution to its time-discretisation
\eqref{tdiscrA}, both with respect to the initial datum $\bfu_0=\bfh\in L^2(\mt)$.
The error is
\begin{align*}
\E[\varphi(\bfu(T))]-\E[\varphi(\bfu_M)]=\UU(T,\bfh)-\E[\varphi(\bfu_M)],
\end{align*}
where $\UU$ is the solution to the Kolmogorov equation \eqref{eq:Kolm'} with initial condition $\varphi\in C^2_b(L^2(\mt))$.
 We decompose
\begin{align*}
\UU(T,\bfh)-\E[\varphi(\bfu_M)]&= \UU(T,\bfh)-\E[ \UU(0,\bfu_M)]\\&=\sum_{m=1}^{M}\Big[\UU(T-t_{m-1},\bfu_{m-1})-\UU(T-t_{m},\bfu_{m})\Big].
\end{align*}
Recalling the definition 
\begin{align}\label{eq:Utau}\bfU_\tau(t)=\frac{1}{\tau}\int_{t_{m-1}}^t\bfu_{m-1}\ds+\int_{t_{m-1}}^t\Phi(\bfu_{m-1})\,\dd W,\quad t\in[t_{m-1},t_m],\end{align}
we rewrite equation \eqref{tdiscr'} as \begin{align*}
\bfu_\tau(t)
&=\bfu_{m-1}+\int_{t_{m-1}}^t\mathcal S_{\tau}\A\bfu_{m-1}\ds-\int_{t_{m-1}}^t\mathcal S_{\tau}f(\bfu_{m-1})\ds\\
&\quad+\int_{t_{m-1}}^t\mathcal S_{\tau}f(\bfu_{m-1})\ds+\frac{1}{\tau}\int_{t_{m-1}}^t\big(D\mathcal T_{\tau}(\bfU_\tau)-\mathcal S_\tau\big)\bfu_{m-1}\ds\\
&\quad+\int_{t_{m-1}}^t D\mathcal T_\tau\big(\bfU_\tau\big)\Phi(\bfu_{m-1})\,\dd W\\
&\quad+\frac{1}{2}\sum_{i\geq 1}\int_{t_{m-1}}^t D^2\mathcal T_\tau(\bfU_\tau)\big(\Phi(\bfu_{m-1})e_i,\Phi(\bfu_{m-1})e_i\big)\ds
\end{align*}
We apply It\^o's formula (see \cite[Thm. 4.17]{PrZa}) to $\Psi(t,\bfu_{\tau})$
to get for $t\in[t_{m-1},t_m)$
\begin{align*}
\Psi&(t,\bfu_{\tau}(t))
=\Psi(t_{m-1},\bfu_{\tau}(t_{m-1}))+\int_{t_{m-1}}^{t}\partial_t \Psi(s,\bfu_{\tau})\ds\\&+\int_{t_{m-1}}^t\big(\mathcal S_{\tau}\A\bfu_{m-1},D\Psi(s,\bfu_{\tau})\big)_{L^2_x}\ds-\int_{t_{m-1}}^{t}\big(\mathcal S_{\tau}f(\bfu_{m-1}),D\Psi(s,\bfu_{\tau})\big)_{L^2}\ds\\
&+\int_{t_{m-1}}^t\Big( D\Psi(s,\bfu_{\tau}),D\mathcal T_\tau\big(\bfU_\tau\big)\Phi(\bfu_{m-1})\,\dd W\Big)_{L^2_x}\\
&+\frac{1}{2}\sum_{i\geq 1}\int_{t_{m-1}}^t\Big( D\Psi(s,\bfu_{\tau}),D^2\mathcal T_\tau\big(\bfU_\tau\big)\big(\Phi(\bfu_{m-1})e_i,\Phi(\bfu_{m-1})e_i\big)\Big)_{L^2_x}\ds\\
&+\int_{t_{m-1}}^{t_m}\Big(\mathcal S_{\tau}f(\bfu_{m-1})+\frac{1}{\tau}\big(D\mathcal T_{\tau}(\bfU_\tau)-\mathcal S_\tau\big)\bfu_{m-1},D\Psi(s,\bfu_{\tau})\Big)_{L^2_x}\ds\\
&+\tfrac{1}{2}\sum_{i\geq1}\int_{t_{m-1}}^tD^2\Psi(s,\bfu_{\tau})\Big(D\mathcal T_\tau\big(\bfU_\tau\big)\Phi(\bfu_{m-1})e_i,D\mathcal T_\tau\big(\bfU_\tau\big)\Phi(\bfu_{m-1})e_i\Big)\ds
\end{align*}
such that
\begin{align*}
\E[\Psi&(t_m,\bfu_{m})]
=\E[\Psi(t_{m-1},\bfu_{m-1})]+\E\bigg[\int_{t_{m-1}}^{t}\partial_t \Psi(s,\bfu_{\tau})\ds\bigg]\\&+\E\bigg[\int_{t_{m-1}}^{t_m}\big(\mathcal S_{\tau}\A\bfu_{m-1},D\Psi(s,\bfu_{\tau})\big)_{L^2_x}\ds-\E\int_{t_{m-1}}^{t_m}\big(\mathcal S_{\tau}f(\bfu_{m-1}),D\Psi(s,\bfu_{\tau})\big)_{L^2_x}\ds\bigg]\\\
&+\frac{1}{2}\sum_{i\geq 1}\E\bigg[\int_{t_{m-1}}^{t_m}\bigg( D\Psi(s,\bfu_{\tau}),D^2\mathcal T_\tau\big(\bfU_\tau\big)\big(\Phi(\bfu_{m-1})e_i,\Phi(\bfu_{m-1})e_i\big)\bigg)_{L^2_x}\ds\bigg]\\
&+\E\bigg[\int_{t_{m-1}}^{t_m}\Big(\mathcal S_{\tau}f(\bfu_{m-1})+\frac{1}{\tau}\big(D\mathcal T_{\tau}(\bfU_\tau)-\mathcal S_\tau\big)\bfu_{m-1},D\Psi(\sigma,\bfu_{\tau})\Big)_{L^2_x}\ds\bigg]\\
&+\tfrac{1}{2}\sum_{i\geq1}\E\bigg[\int_{t_{m-1}}^{t_m}D^2\Psi(s,\bfu_{\tau})\Big(D\mathcal T_\tau\big(\bfU_\tau\big)\Phi(\bfu_{m-1})e_i,D\mathcal T_\tau\big(\bfU_\tau\big)\Phi(\bfu_{m-1})e_i\Big)\ds\bigg].
\end{align*}
Setting $\Psi(t,\bfh)=\UU(T-t,\bfh)$ we obtain by \eqref{eq:Kolm'}
\begin{align*}
&\sum_{m=1}^{M}\E\Big[ \UU(T-t_{m-1},\bfu_{m-1})-\UU(T-t_{m},\bfu_{m})\Big]\\&=\UU(T,\bfh)-\E [\UU(T-\tau,\bfu_1)]+\sum_{m=2}^{M}\E\bigg[\int_{t_{m-1}}^{t_m}\big(\mathcal A\bfu_{\tau}-\mathcal S_{\tau}\A \bfu_{m-1},D\UU(T-s,\bfu_{\tau})\big)_{L^2_x}\ds\bigg]\\
&\quad+\sum_{m=2}^{M}\E\bigg[\int_{t_{m-1}}^{t_m}\big(\mathcal S_{\tau}f(\bfu_{m-1})-f(\bfu_{\tau}),D\UU(T-s,\bfu_{\tau})\big)_{L^2}\ds\bigg]\\
&\quad+\frac{1}{2}\sum_{m=2}^M\sum_{i\geq 1}\E\bigg[\int_{t_{m-1}}^{t_m}\bigg( D \UU(T-s,\bfu_{\tau}),D^2\mathcal T_\tau\big(\bfU_\tau\big)\big(\Phi(\bfu_{m-1})e_i,\Phi(\bfu_{m-1})e_i\big)\bigg)_{L^2_x}\ds\bigg]\\
&\quad+\sum_{m=2}^M\E\bigg[\int_{t_{m-1}}^{t_m}\Big(\mathcal S_{\tau}f(\bfu_{m-1})+\frac{1}{\tau}\big(D\mathcal T_{\tau}(\bfU_\tau)-\mathcal S_\tau\big)\bfu_{m-1},D\UU(T-s,\bfu_{\tau})\Big)_{L^2_x}\ds\bigg]\\
&\quad+\tfrac{1}{2}\sum_{m=2}^{M}\sum_{i\geq1}\E\bigg[\int_{t_{m-1}}^{t_m}\Big[D^2\UU(T-s,\bfu_{\tau})\Big(\Phi(\bfu_{\tau})e_i,\Phi(\bfu_{\tau})e_i\Big)\\&\qquad\qquad\qquad\qquad -D^2\UU(T-s,\bfu_{\tau})\Big(D\mathcal T_\tau\big(\bfU_\tau\big)\Phi(\bfu_{m-1})e_i,D\mathcal T_\tau\big(\bfU_\tau\big)\Phi(\bfu_{m-1})e_i\Big)\Big]\ds\bigg]\\
&=:(\mathrm{I})+(\mathrm{II})+(\mathrm{III})+(\mathrm{IV})+(\mathrm{V})+(\mathrm{VI}).
\end{align*}
We will estimate the terms $(\mathrm{I})$--$(\mathrm{V})$ in the following five subsections.

\subsection{The initial error $(\mathrm{I})$}\label{subsec:1}
Using $\UU(T,\bfh)=\E[\varphi(\bfu(T))]=\E [\UU(T-\tau,\bfu(\tau))]$ and Lemma \ref{lem:kol1} we have
\begin{align*}
|\UU(T,\bfh)-\E [\UU(T-\tau,\bfu_1)]|\leq c(\varphi)\E\Big[\|\bfu(\tau)-\bfu_1\|_{L^{2}_x}\Big],
\end{align*}
where, by \eqref{tdiscr'},
\begin{align}\label{eq:2009}
\begin{aligned}
\bfu(\tau)-\bfu_1&=(\mathcal S(\tau)-\mathcal S_{\tau})\bfh+\int_0^{\tau}\mathcal S(\tau-\sigma)f(\bfu)\ds-\frac{1}{\tau}\int_0^\tau(D\mathcal T_\tau(\bfU_\tau)-\mathcal S_{\tau})\bfh\ds\\
&\quad+\int_0^{\tau}\big(\mathcal S(\tau -\sigma)\Phi(\bfu)-D\mathcal T_{\tau}(\bfU_\tau)\Phi(\bfh)\big)\,\dd W\\
&\quad-\frac{1}{2}\sum_{i\geq 1}\int_{0}^{\tau} D^2\mathcal T_\tau\big(\bfU_\tau\big)\big(\Phi(\bfh)e_i,\Phi(\bfh)e_i\big)\ds
\end{aligned}
\end{align}
We estimate now the right-hand side term by term.
Using \eqref{eq:S5} (with $\beta=0$ and $r=1$) we obtain
\begin{align*}
\|(\mathcal S(\tau)-\mathcal S_{\tau})\bfh\|_{L^2_x}
\leq\,c\tau\|\bfh\|_{W^{1,2}_x}\leq\,c\tau\end{align*}
using the regularity of $\bfh$. 
We further obtain
\begin{align*}
\E\bigg[\bigg\|\int_0^{\tau}\mathcal S(\tau-\sigma)f(\bfu)\ds\bigg\|_{L^{2}_x}\bigg]
&\leq\,c\,\E\bigg[\int_0^\tau\|\mathcal S(\tau-\sigma)\|_{\mathcal L(L^2_x)} \big(1+\|u\|^3_{L^{6}_x}\big)\ds\bigg]\\
&\leq\,c\E\bigg[\int_0^\tau\big(1+\| \bfu\|_{L^{6}_x}^3\big)\ds\bigg]\leq\,c\tau
\end{align*}
by \eqref{eq:S2} and \eqref{lem:B1}. Furthermore, it holds by \eqref{eq:T-id} and \eqref{eq:S3}
\begin{align*}
\|(D\mathcal T_\tau(\bfU_\tau)-\mathcal S_{\tau})\bfh\|_{L^{2}_x}\leq\,c\tau(1+\|\bfh\|_{W^{2,2}_x})(1+\|\bfU_\tau\|_{W^{1,2}_x}).
\end{align*}
Hence we obtain by \eqref{eq:1102b}
\begin{align*}
\E\bigg[\bigg\|\frac{1}{\tau}\int_0^\tau(D\mathcal T_\tau(\bfU_\tau)-\mathcal S_{\tau})\bfh\ds\bigg\|_{L^{2}_x}\bigg]\leq\,c\tau.
\end{align*}
We write the stochastic term from \eqref{eq:2009} as
\begin{align*}
\int_0^\tau \big(\SSS(\tau-\sigma)\Phi(\bfu)&-D\mathcal T_{\tau}(\bfU_\tau)\Phi(\bfh)\big)\,\dd W\\&=\int_0^\tau \big(\SSS(\tau-\sigma)-\SSS_{\tau-\sigma}\big)\Phi(\bfu)\,\dd W
+\int_0^\tau \big(\SSS_{\tau-\sigma}-\SSS_\tau\big)\Phi(\bfu)\,\dd W\\
&\quad+\int_0^\tau \SSS_\tau\big(\Phi(\bfu)-\Phi(\bfh)\big)\,\dd W+\int_0^\tau \big(\SSS_\tau-D\mathcal T_{\tau}(\bfU_\tau)\big)\Phi(\bfh)\,\dd W\\
&=:\mathfrak M_1(\tau)+\mathfrak M_2(\tau)+\mathfrak M_3(\tau)+\mathfrak M_4(\tau).
\end{align*}
It\^o-isometry yields
\begin{align*}
\E\big[\big\|\mathfrak M_1(\tau)\big\|_{L^{2}_x}\big]
&\leq\,c\,\bigg(\E\bigg[\int_0^{\tau}\|\big(\mathcal S(\tau -\sigma)-\SSS_{\tau-\sigma}\big)\Phi(\bfu)\|_{L_2(\mathfrak U;L^{2}_x)}^2\ds\bigg]\bigg)^{\frac{1}{2}}\\
&\leq\,c\tau\,\bigg(\E\bigg[\int_0^{\tau}\|\Phi(\bfu)\|_{L_2(\mathfrak U;W^{1,2}_x)}^2\ds\bigg]\bigg)^{\frac{1}{2}}\\
&\leq\,c\tau\,\bigg(\E\bigg[\int_0^{\tau}\big(\|\bfu\|_{W^{1,2}_x}^2+1\big)\ds\bigg]\bigg)^{\frac{1}{2}}
\leq\,c\tau^{3/2}
\end{align*}
as a consequence of \eqref{eq:phi}, \eqref{eq:S5} and \eqref{eq:inerror}.
Similarly, we have
\begin{align*}
\E\Big[\big\|\mathfrak M_2(\tau)\big\|_{L^{2}_x}\Big]
&\leq\,c\,\bigg(\E\bigg[\int_0^{\tau}\|\big(\SSS_{\tau-\sigma}-\SSS_\tau\big)\Phi(\bfu)\|_{L_2(\mathfrak U;L^{2}_x)}^2\ds\bigg]\bigg)^{\frac{1}{2}}\\
&\leq\,c\tau\,\bigg(\E\bigg[\int_0^{\tau}\|\Phi(\bfu)\|_{L_2(\mathfrak U;W^{1,2}_x)}^2\ds\bigg]\bigg)^{\frac{1}{2}}\leq\,c\,\tau
\end{align*}
by \eqref{eq:S3}. Furthermore,
\begin{align*}
\E\Big[\big\|\mathfrak M_3(\tau)&\big\|_{L^{2}_x}
\Big]\leq\,c\,\bigg(\E\bigg[\int_0^{\tau}\|\SSS_\tau\big(\Phi(\bfu)-\Phi(\bfh)\big)\|_{L_2(\mathfrak U;L^{2}_x)}^2\ds\bigg]\bigg)^{\frac{1}{2}}\\
&\leq\,c\bigg(\E\bigg[\int_0^{\tau}\|\Phi(\bfu)-\Phi(\bfh)\|_{L_2(\mathfrak U;L^{2}_x)}^2\ds\bigg]\bigg)^{\frac{1}{2}}\leq\,c\bigg(\E\bigg[\int_0^{\tau}\|\bfu-\bfh\|_{L^2_x}^2\ds\bigg]\bigg)^{\frac{1}{2}}\\
&\leq\,c\,\sqrt{\tau}\bigg(\E\bigg[\sup_{0\leq s\leq T}\|\bfu-\bfh\|_{L^2_x}^2\bigg]\bigg)^{\frac{1}{2}}\leq\,c\tau
\end{align*}
due to \eqref{eq:phi}, \eqref{eq:S1} and \eqref{eq:inerror}.
Finally, by \eqref{eq:T-id}, \eqref{eq:S3}, It\^{o}-isometry, \eqref{eq:1102b} and \eqref{eq:phi}
\begin{align*}
\E\Big[\big\|\mathfrak M_4(\tau)\big\|_{L^{2}_x}\Big]
&\leq\,c\,\bigg(\E\bigg[\int_0^{\tau}\|\big(\SSS_\tau-D\mathcal T_\tau(\bfU_\tau)\big)\Phi(\bfh)\|_{L_2(\mathfrak U;L^{2}_x)}^2\ds\bigg]\bigg)^{\frac{1}{2}}\\
&\leq\,c\tau\bigg(\E\bigg[\int_0^{\tau}(1+\|\bfh\|^2_{W^{2,2}})\big(1+\|\bfU_\tau\|^2_{W^{1,2}_x}\big)\ds\bigg]\bigg)^{\frac{1}{2}}\\
&\leq\,c\tau\bigg(\E\bigg[\int_0^{\tau}(1+\|\bfh\|^4_{W^{2,2}}+\|\bfU_\tau\|_{W^{1,2}_x}^4\big)\ds\bigg]\bigg)^{\frac{1}{2}}\\
&\leq\,c\tau\bigg(\E\bigg[\int_0^{\tau}\big(1+\|\bfh\|^4_{W^{1,2}_x}\big)\ds\bigg]\bigg)^{\frac{1}{2}}\leq\,c\tau^{3/2}.
\end{align*}
Finally, we have due to \eqref{eq:D2T} and \eqref{eq:1102b}
\begin{align*}
\E\bigg[\bigg\|&\sum_{i\geq 1}\int_{0}^{\tau} D^2\mathcal T_\tau\big(\bfU_\tau\big)\big(\Phi(\bfh)e_i,\Phi(\bfh)e_i\big)\ds\bigg\|_{L^2_x}\bigg]\\
&\leq\sum_{i\geq1}\E\bigg[\int_{0}^{\tau}\|\Phi(\bfh)e_i\|_{W^{1,2}_x}^2\|\bfU_\tau\|_{W^{1,2}_x}\ds\bigg]\\
&\leq\E\bigg[\int_{0}^{\tau}\big(\|\Phi(\bfh)\|_{L_2(\mathfrak U;W^{1,2}_x)}^4+\|\bfU_\tau\|_{W^{1,2}_x}^2\big)\ds\bigg]\\
&\leq\,c\,\E\bigg[\int_{0}^{\tau}\big(1+\|\bfh\|_{W^{1,2}_x}^4\big)\ds\bigg]
\leq\,c\tau.
\end{align*}
Hence we conclude
\begin{align}\label{estsubsec:1}
|(\mathrm{I})|\leq\,c\tau.
\end{align}

\subsection{The error in the linear part $(\mathrm{II})$}\label{subsec:2}
We use $\A\SSS_\tau=\SSS_\tau\A$ and $\mathcal A-\mathcal S_\tau\mathcal A=-\tau\mathcal S_\tau\mathcal A^2$ to get
\begin{align*}
(\mathrm{II})&=\sum_{m=2}^{M}\E\bigg[\int_{t_{m-1}}^{t_m}\big((\mathcal A-\mathcal S_{\tau}\A)\bfu_{m-1},D\UU(T-s,\bfu_{\tau})\big)_{L^2_x}\ds\bigg]\\
&\quad+\sum_{m=2}^{M}\E\bigg[\int_{t_{m-1}}^{t_m}\big(\mathcal A(\bfu_{\tau}-\bfu_{m-1}),D\UU(T-s,\bfu_{\tau})\big)_{L^2_x}\ds\bigg]\\
&=-\tau\sum_{m=2}^{M}\E\bigg[\int_{t_{m-1}}^{t_m}\big(\mathcal S_{\tau}\A^2\bfu_{m-1},D\UU(T-s,\bfu_{\tau})\big)_{L^2_x}\ds\bigg]\\
&\quad+\sum_{m=2}^{M}\E\bigg[\int_{t_{m-1}}^{t_m}\big(\mathcal A(\bfu_{\tau}-\bfu_{m-1}),D\UU(T-s,\bfu_{\tau})\big)_{L^2_x}\ds\bigg]\\
&=:(\mathrm{II})_1+(\mathrm{II})_2.
\end{align*}
We clearly have
\begin{align*}
(\mathrm{II})_1&\leq\,c\tau\sum_{m=2}^{M}\E\bigg[\int_{t_{m-1}}^{t_m}\|\mathcal S_\tau\|_{\mathcal L(L^2_x)}\|\A^2\bfu_{m-1}\|_{L^2_x}\| D\UU(T-s,\bfu_{\tau})\|_{L^2_x}\ds\bigg]\\
&\leq\,c\tau\sum_{m=2}^{M}\E\bigg[\int_{t_{m-1}}^{t_m}\,\|\A^2\bfu_{m-1}\|_{L^2_x}\ds\bigg]\leq\,c\tau
\end{align*}
using \eqref{eq:S2}, Lemma \ref{lem:kol1} and Lemma \ref{lemma:3.1}. Using Malliavin claculus, cf. Section \ref{sec:mal},
the term $(\mathrm{II})_2$ can be decomposed into the sum of
\begin{align*}
(\mathrm{II})_2^1&=\sum_{m=2}^{M}\E\bigg[\int_{t_{m-1}}^{t_m}\frac{(t-t_{m-1})}{\tau}\big(\A(D\mathcal T_\tau(\bfU_\tau)-\mathrm{id})\bfu_{m-1},D\UU(T-s,\bfu_{\tau})\big)_{L^2_x}\ds\bigg],\\
(\mathrm{II})_2^2
&=\frac{1}{2}\sum_{m=2}^M\sum_{i\geq 1}\E\bigg[\int_{t_{m-1}}^{t_m}\int_{t_{m-1}}^{s}\bigg( D \UU(T-s,\bfu_\tau),D^2\mathcal T_\tau\big(\bfU_\tau\big)\big(\Phi(\bfu_{m-1})e_i,\Phi(\bfu_{m-1})e_i\big)\bigg)_{L^2_x}\,\dd\sigma\ds\bigg],\\
(\mathrm{II})_2^3
&=\tau\sum_{m=2}^M\sum_{i\geq1}\E\bigg[\int_{t_{m-1}}^{t_m}\int_0^{t_{m-1}} D^2\UU(T-s,\bfu_{\tau})\\&\qquad\qquad\qquad\qquad\qquad\qquad\Big(D\mathcal T_\tau(\bfU_\tau)\Phi(\bfu_{m-1})e_i,\A D\mathcal T_{\tau}(\bfU_\tau)\Phi(\bfu_{m-1})e_i\Big)\,\dd\sigma\ds\bigg],
\end{align*}
taking into account \eqref{tdiscr'}, \eqref{eq:malpart} and $\mathcal D_s\bfu_\tau^\lambda=D\mathcal T_{\tau}(\bfU_\tau)\Phi(\bfu_{m-1})$ for $t\in (t_{m-1},t_m)$ and $s\in[t_{m-1},t]$.
We obtain from \eqref{eq:T-idW22}, Lemma \ref{lem:kol1}, \eqref{eq:1102b} and  Lemma \ref{lemma:3.1}
\begin{align*}
|(\mathrm{II})_2^1|
&\leq\,c \sum_{m=2}^M\E\bigg[\int_{t_{m-1}}^{t_m}\|(D\mathcal T_\tau(\bfU_\tau)-\mathrm{id})\bfu_{m-1}\|_{W^{2,2}_x}\ds\bigg]\\
&\leq\,c\tau \sum_{m=2}^M\E\bigg[\int_{t_{m-1}}^{t_m}\|\bfu_{m-1}\|_{W^{4,2}_x}(1+\|\bfU_\tau\|_{W^{2,2}_x}^{8})\ds\bigg]\\
&\leq\,c\tau \sum_{m=2}^M\E\bigg[\int_{t_{m-1}}^{t_m}\Big(1+\|\bfu_{m-1}\|_{W^{4,2}_x}^2+\|\bfU_\tau\|_{W^{2,2}_x}^{16}\Big)\ds\bigg]\leq\,c\tau .
\end{align*}

Moreover, it holds by \eqref{eq:D2T} and Lemma \ref{lemma:3.1}
\begin{align*}
|(\mathrm{II})_2^2|
&\leq\frac{1}{2}\sum_{m=2}^M\sum_{i\geq 1}\E\bigg[\int_{t_{m-1}}^{t_m}\int_{t_{m-1}}^{s}\big\|D^2\mathcal T_\tau\big(\bfU_\tau\big)\big(\Phi(\bfu_{m-1})e_i,\Phi(\bfu_{m-1})e_i\big)\big\|_{L^2_x}\,\dd\sigma\ds\bigg]\\
&\leq\sum_{m=2}^M\E\bigg[\int_{t_{m-1}}^{t_m}\int_{t_{m-1}}^s\|\Phi(\bfu_{m-1})\|_{L_2(\mathfrak U;W^{1,2}_x)}^2\big(1+\big\|\bfU_\tau\big\|_{L^{6}_x}\big)\,\dd\sigma\ds\bigg]\\
&\leq\sum_{m=2}^M\E\bigg[\int_{t_{m-1}}^{t_m}\int_{t_{m-1}}^s\bigg(1+\|\bfu_{m-1}\|_{W^{1,2}_x}^3+\big\|\bfU_\tau\big\|_{W^{1,2}_x}^{3}\bigg)\dd\sigma\ds\bigg]\\
&\leq\,c\tau\sum_{m=2}^M\E\bigg[\int_{t_{m-1}}^{t_m}\Big(1+\|\bfu_{m-1}\|_{W^{1,2}_x}^3\Big)\dd\sigma\ds\bigg]
\leq\,c\tau
\end{align*}
with the help of \eqref{eq:phi} and \eqref{eq:1102d}.
Finally,
we obtain by \eqref{eq:2704}, \eqref{eq:2704B}, \eqref{eq:2704C} and Lemma \ref{lem:kol2}
\begin{align*}
|(\mathrm{II})_2^3|
&\leq\tau\sum_{m=2}^M\sum_{i\geq1}\E\bigg[\int_{t_{m-1}}^{t_m}\int_0^{t_{m-1}}\|D\mathcal T_{\tau}(\bfU_\tau)\Phi(\bfu_{m-1})e_i\|_{W^{2,2}_x}\\
&\qquad\qquad\qquad\qquad\times\|D\mathcal T_{\tau}(\bfU_\tau)\Phi(\bfu_{m-1})e_i\|_{W^{1,2}_x}\dd\sigma\ds\bigg]\\
&\leq\,c\tau\sum_{m=2}^M\E\bigg[\int_{t_{m-1}}^{t_m}\int_0^{t_{m-1}}\|\Phi(\bfu_{m-1})\|^2_{L_2(\mathfrak U;W^{2,2}_x)}\big(1+\big\|\bfU_\tau\big\|_{W^{2,2}_x}^{8}\big)\,\dd\sigma\ds\bigg]
\end{align*}
such that, by \eqref{eq:phi2} and \eqref{eq:1102b},
\begin{align*}
&\leq\tau\sum_{m=2}^M\E\bigg[\int_{t_{m-1}}^{t_m}\int_0^{t_{m-1}}\big(1+\|\bfu_{m-1}\|^{28}_{W^{2,2}_x}\big)\,\dd\sigma\ds\bigg]\leq\,c\tau
\end{align*}
using Lemma \ref{lemma:3.1} in the last step. Plugging all together we have shown
\begin{align}\label{estsubsec:2}
|(\mathrm{II})|\leq\,c\tau.
\end{align}

\subsection{The error in the non-linear part $(\mathrm{III})$}\label{subsec:3}
The non-linear part can be written as
\begin{align*}
(\mathrm{III})&=-\sum_{m=2}^M\E\bigg[\int_{t_{m-1}}^{t_m}\big(\big(\mathrm{id}-\mathcal S_{\tau}\big)f(\bfu_{m-1}),D\UU(T-s,\bfu_{\tau}))_{L^2_x}\ds\bigg]\\
&-\sum_{m=2}^M\E\bigg[\int_{t_{m-1}}^{t_m}\big( f(\bfu_{\tau})-f(\bfu_{m-1}),D\UU(T-s,\bfu_{\tau})\big)_{L^2_x}\ds\bigg]\\
&=:(\mathrm{III})_1+(\mathrm{III})_2,
\end{align*}
where
\begin{align*}
|(\mathrm{III})_1|&\leq \sum_{m=2}^M\E\bigg[\int_{t_{m-1}}^{t_m}\|\A^{-1}\big(\mathrm{id}-\mathcal S_{\tau}\big)\|_{\mathcal L(L^2_x)}\|f(\bfu_{m-1})\|_{W^{2,2}_x}\|D\UU(T-s,\bfu_{\tau})\|_{L^2_x}\ds\bigg]\\
&\leq \,c\tau\sum_{m=2}^M\E\bigg[\int_{t_{m-1}}^{t_m}(\|\bfu_{m-1}\|^q_{W^{2,2}_x}+1)\ds\bigg]
\end{align*}
due to \eqref{eq:S3} and Lemma \ref{lem:kol1}.
The expectation is bounded by $c\tau$ on account of Lemma \ref{lemma:3.1}. Estimating $(\mathrm{III})_2$ requires more effort.
Introducing the notation $f^j(v)=( f(v),v_j)_{L^2_x}$ 
 with the orthonormal basis $(v_j)\subset L^2(\mt)$ allows us to write
\begin{align*}
(\mathrm{III})_2&=-\sum_{m=2}^M\sum_{j\geq1}\E\bigg[\int_{t_{m-1}}^{t_m}\Big( f^j(\bfu_{\tau}(s))-f^j(\bfu_{m-1}),\partial_j\UU(T-s,\bfu_{\tau})\Big)_{L^2_x}\ds\bigg].
\end{align*}
In order to proceed we apply It\^{o}'s formula to $f^j(\bfu_{\tau})$ and recall the definition of $\bfu_\tau$ in \eqref{tdiscr'}. Using the orthonormal basis $(e_i)_{i\geq1}$ of the Hilbert space $\mathfrak{U}$ introduced in Section \ref{sec:prob} we obtain
\begin{align*}
f^j(\bfu_{\tau}(t))&-f^j(\bfu_{m-1})\\&=\frac{1}{2}\sum_{i\geq 1}\int_{t_{m-1}}^tD^2f^j\big(\bfu_{\tau}\big)\Big(D\mathcal T_{\tau}(\bfU_\tau)\Phi(\bfu_{m-1}) e_i,D\mathcal T_{\tau}(\bfU_\tau)\Phi(\bfu_{m-1})e_i\Big)\ds\\
&\quad+\int_{t_{m-1}}^t\Big(\frac{D\mathcal T_{\tau}(\bfU_\tau)-\mathrm{Id}}{\tau}\bfu_{m-1},Df^j(\bfu_{\tau})\Big)_{L^2_x}\ds\\
&\quad+\frac{1}{2}\sum_{i\geq 1}\int_{t_{m-1}}^{t}\bigg( D f^j(\bfu_\tau),D^2\mathcal T_\tau\big(\bfU_\tau\big)\big(\Phi(\bfu_{m-1})e_i,\Phi(\bfu_{m-1})e_i\big)\bigg)_{L^2_x}\ds\\
&\quad+\int_{t_{m-1}}^t\big( Df^j(\bfu_{\tau}),D\mathcal T_{\tau}(\bfU_\tau)\Phi(\bfu_{m-1})\big)_{L^2_x}\,\dd W
\end{align*}
such that
\begin{align*}
&(\mathrm{III})_2\\&=\sum_{m=2}^M\sum_{i\geq1}\E\bigg[\int_{t_{m-1}}^{t_m}\int_{t_{m-1}}^{s}\Big( f''(\bfu_{\tau})|D\mathcal T_{\tau}(\bfU_\tau)\Phi(\bfu_{m-1}) e_i|^2,D \UU\Big)_{L^2_x}\,\dd\sigma\ds\bigg]\\
&+\sum_{m=2}^M\sum_{j\geq1}\E\bigg[\int_{t_{m-1}}^{t_m}\int_{t_{m-1}}^s\Big(\frac{D\mathcal T_{\tau}(\bfU_\tau)-\mathrm{Id}}{\tau}\bfu_{m-1},Df^j(\bfu_{\tau})\Big)_{L^2_x}\partial_j\UU\dd\sigma\ds\bigg]\\
&+\frac{1}{2}\sum_{m=2}^M\sum_{i,j\geq 1}\E\bigg[\int_{t_{m-1}}^{t_m}\int_{t_{m-1}}^{s}\bigg( D f^j(\bfu_\tau),D^2\mathcal T_\tau\big(\bfU_\tau\big)\big(\Phi(\bfu_{m-1})e_i,\Phi(\bfu_{m-1})e_i\big)\bigg)_{L^2_x}\partial_j\UU\,\dd\sigma\ds\bigg]\\
&+\sum_{m=2}^M\sum_{j\geq1}\E\bigg[\int_{t_{m-1}}^{t_m}\int_{t_{m-1}}^s\big( Df^j(\bfu_{\tau}),D\mathcal T_{\tau}(\bfU_\tau)\Phi(\bfu_{m-1})\big)_{L^2_x}\partial_j\UU\,\dd W\ds\bigg]\\
&=:(\mathrm{III})_2^1+(\mathrm{III})_2^2+(\mathrm{III})_2^3+(\mathrm{III})_2^4,
\end{align*}
where $\partial_ju$ and $Du$ are evaluated at $(T-s,\bfu_{\tau})$.
We have by Lemma \ref{lem:kol1}, \eqref{eq:D2T} and \eqref{eq:2704}
\begin{align*}
|(\mathrm{III})^1_2|&\leq \sum_{m=2}^M\sum_{i\geq1}\E\bigg[\int_{t_{m-1}}^{t_m}\int_{t_{m-1}}^{s}\Big\|f''(\bfu_{\tau})|D\mathcal T_{\tau}(\bfU_\tau)\Phi(\bfu_{m-1}) e_i|^2\Big\|_{L^2_x}\| D\UU(T-s,\bfu_{\tau})\|_{L^2_x}\,\dd\sigma\ds\bigg]\\
&\leq\,c\sum_{m=2}^M\sum_{i\geq1}\E\bigg[\int_{t_{m-1}}^{t_m}\int_{t_{m-1}}^{s}\|\bfu_\tau\|_{L^{6}_x}\|\mathcal D\mathcal T_{\tau}(\bfU_\tau)\Phi(\bfu_{m-1})e_i\|_{L^{6}_x}^2\dd\sigma\ds\bigg]\\
&\leq\,c\sum_{m=2}^M\sum_{i\geq1}\E\bigg[\int_{t_{m-1}}^{t_m}\int_{t_{m-1}}^{s}(1+\|\bfu_\tau\|_{W^{1,2}_x})\|\Phi(\bfu_{m-1})e_i\|_{L^{6}_x}^2\,\dd\sigma\ds\bigg]\\
&\leq\,c\sum_{m=2}^M\E\bigg[\int_{t_{m-1}}^{t_m}\int_{t_{m-1}}^{s}(1+\|\bfu_\tau\|_{W^{1,2}_x})\|\Phi(\bfu_{m-1})\|^2_{L_2(\mathfrak U;W^{1,2}_x)}\,\dd\sigma\ds\bigg]
\end{align*}
using also \eqref{est:TL2}--\eqref{est:TW22} in the last step. Finally, by \eqref{eq:phi},
\begin{align*}
(\mathrm{III})^1_2
&\leq \,c\sum_{m=2}^M\E\bigg[\int_{t_{m-1}}^{t_m}\int_{t_{m-1}}^{s}(1+\|\bfu_\tau\|_{W^{1,2}_x}^{3}+\|\bfu_{m-1}\|^3_{W^{1,2}_x})\,\dd\sigma\ds\bigg]\leq\,c\tau
\end{align*}
on account of Lemma \ref{lemma:3.1} and \eqref{eq:1102c}.
In order to estimate $(\mathrm{III})_2^2$ we rewrite
\begin{align*}
(\mathrm{III})_2^2&=\sum_{m=2}^M\sum_{j\geq1}\E\bigg[\int_{t_{m-1}}^{t_m}\int_{t_{m-1}}^s\Big( Df(\bfu_{\tau})\Big(\frac{D\mathcal T_{\tau}(\bfU_\tau)-\mathrm{Id}}{\tau}\bfu_{m-1}\Big),\bfu_j\Big)_{L^2_x}\partial_j\UU\dd\sigma\ds\bigg]
\end{align*}
We obtain further with the help \eqref{eq:T-id}
\begin{align*}
|(\mathrm{III})_2^2|&\leq \,\sum_{m=2}^M\E\bigg[\int_{t_{m-1}}^{t_m}\int_{t_{m-1}}^s\Big\|Df(\bfu_{\tau})\Big(\frac{D\mathcal T_{\tau}(\bfU_\tau)-\mathrm{Id}}{\tau}\bfu_{m-1}\Big)\Big\|_{L^2_x}\|D\UU\|_{L^2_x}\,\dd\sigma\ds\bigg]\\
&\leq\,c\sum_{m=2}^M\E\bigg[\int_{t_{m-1}}^{t_m}\int_{t_{m-1}}^{s}\Big(1+\|\bfu_{\tau}\|_{W^{2,2}_x}^2\Big)\Big\|\frac{D\mathcal T_{\tau}(\bfU_\tau)-\mathrm{Id}}{\tau}\bfu_{m-1}\Big\|_{L^{2}_x}\,\dd\sigma\ds\bigg]\\
&\leq\,c\sum_{m=2}^M\E\bigg[\int_{t_{m-1}}^{t_m}\int_{t_{m-1}}^{s}\Big(1+\|\bfu_{\tau}\|^{2}_{W^{2,2}_x}\Big)\big(1+\|\bfU_\tau\|_{W^{1,2}_x}^2\big)\|\bfu_{m-1}\|_{W^{2,2}_x}\,\dd\sigma\ds\bigg]\\
&\leq\,c\sum_{m=2}^M\E\bigg[\int_{t_{m-1}}^{t_m}\int_{t_{m-1}}^{s}\big(1+\|\bfu_{m-1}\|^9_{W^{2,2}_x}+\|\bfU_\tau\|^{9}_{W^{1,2}_x}+\|\bfu_{\tau}\|_{W^{2,2}_x}^{3}\big)\,\dd\sigma\ds\bigg]\\
&\leq\,c\sum_{m=2}^M\E\bigg[\int_{t_{m-1}}^{t_m}\int_{t_{m-1}}^{s}\big(1+\|\bfu_{m-1}\|^{9}_{W^{2,2}_x}\big)\,\dd\sigma\ds\bigg]\leq\,c\tau
\end{align*}
using also Lemma \ref{lemma:3.1}, as well as \eqref{eq:1102b} and \eqref{eq:1102d}.
With the aid of the integration by parts rule for Malliavin derivatives, \emph{cf.} equation \eqref{eq:malpart}, and using again
$\mathcal D_\sigma\bfu_\tau=D\mathcal T_{\tau}(\bfU_\tau)\Phi(\bfu_{m-1})$ for $s\in (t_{m-1},t_m)$ and $\sigma\in[t_{m-1},t]$ we rewrite
\begin{align*}
&(\mathrm{III})_2^4=\sum_{m=2}^M\E\bigg[\int_{t_{m-1}}^{t_m}\Big( D\UU,\int_{t_{m-1}}^sDf(\bfu_{\tau})D\mathcal T(\bfU_\tau)\Phi(\bfu_{m-1})\,\dd W\Big)_{L^2_x}\ds\bigg]\\
&=\sum_{m=2}^M\sum_{i\geq1}\E\bigg[\int_{t_{m-1}}^{t_m}\int_{t_{m-1}}^sD^2\UU\Big(D\mathcal T_{\tau}(\bfU_\tau)\Phi(\bfu_{m-1})e_i,Df(\bfu_{\tau})D\mathcal T_{\tau}(\bfU_\tau)\Phi(\bfu_{m-1})e_i\Big)\,\dd\sigma\ds\bigg]\\
&\leq\sum_{m=2}^M\sum_{i\geq1}\E\bigg[\int_{t_{m-1}}^{t_m}\int_{t_{m-1}}^s\|D\mathcal T_{\tau}(\bfU_\tau)\Phi(\bfu_{m-1})e_i\|_{L^6_x}^2(\|\bfu_{\tau}\|^2_{L^\infty_x+1)}\,\dd\sigma\ds\bigg]\\
&\leq\sum_{m=2}^M\sum_{i\geq1}\E\bigg[\int_{t_{m-1}}^{t_m}\int_{t_{m-1}}^s\|\Phi(\bfu_{m-1})e_i\|_{W^{1,2}_x}^2(\|\bfu_{\tau}\|_{L^\infty_x}^2+1)\,\dd\sigma\ds\bigg]
\end{align*}
using also \eqref{eq:2704}.
We estimate further with the help of \eqref{eq:phi}, Lemma \ref{lem:kol2} and \eqref{eq:1102d}
\begin{align*}
|(\mathrm{III})_2^3|+|(\mathrm{III})_2^4|&\leq\,c\sum_{m=2}^M\E\bigg[\int_{t_{m-1}}^{t_m}\int_{t_{m-1}}^s\|\Phi(\bfu_{m-1})\|_{L_2(\mathfrak U;W^{1,2}_x)}^2(\|\bfu_{\tau}\|^2_{W^{2,2}_x}+1)\,\dd\sigma\ds\bigg]\\
&\leq\,c\sum_{m=2}^M\E\bigg[\int_{t_{m-1}}^{t_m}\int_{t_{m-1}}^s(1+\|\bfu_{m-1}\|^2_{W^{1,2}_x})(\|\bfu_{\tau}\|_{W^{2,2}_x}^2+1)\,\dd\sigma\ds\bigg]\\
&\leq\,c\sum_{m=2}^M\E\bigg[\int_{t_{m-1}}^{t_m}\int_{t_{m-1}}^t\big(1+\|\bfu_{m-1}\|_{W^{1,2}_x}^4+\|\bfu_{\tau}\|_{W^{2,2}_x}^{4}\big)\ds\dt\bigg]\\
&\leq\,c\sum_{m=2}^M\E\bigg[\int_{t_{m-1}}^{t_m}\int_{t_{m-1}}^s\big(1+\|\bfu_{m-1}\|_{W^{2,2}_x}^4\big)\,\dd\sigma\ds\bigg]\leq\,c\tau
\end{align*}
 using Lemma \ref{lemma:3.1}. By collecting the previous estimates
 we conclude that
 \begin{align}\label{estsubsec:3}|(\mathrm{III})|\leq\,c\tau.
\end{align}

\subsection{The corrector error $(\mathrm{IV})$}
Now wer are concerned with $(\mathrm{IV})$ which is the error which arises from the linearisation and thus does not occur in related paper dealoing with linear semigrouops. We call it the corrector error.
We have by \eqref{eq:D2T} and \eqref{eq:1102b}
\begin{align*}
&\frac{1}{2}\sum_{m=2}^M\sum_{i\geq 1}\E\bigg[\int_{t_{m-1}}^{t_m}\bigg( D\UU(t,\bfu_{\tau}),D^2\mathcal T_\tau\big(\bfU_\tau\big)\big(\Phi(\bfu_{m-1})e_i,\Phi(\bfu_{m-1})e_i\big)\bigg)_{L^2_x}\ds\bigg]\\
&\leq\,c\tau\sum_{m=2}^M\sum_{i\geq 1}\E\bigg[\int_{t_{m-1}}^{t_m}\big\| \bfU_\tau\big\|_{W^{1,2}_x}\big\|\Phi(\bfu_{m-1})e_i\big\|^2_{W^{1,2}_x}\ds\bigg]\\
&=\,c\tau\sum_{m=2}^M\E\bigg[\int_{t_{m-1}}^{t_m}\big\| \bfU_\tau\big\|_{W^{1,2}_x}\big\|\Phi(\bfu_{m-1})\big\|^2_{L_2(\mathfrak U;W^{1,2}_x)}\ds\bigg]\\
&\leq\,c\tau\sum_{m=2}^M\E\bigg[\int_{t_{m-1}}^{t_m}\big(1+\big\| \bfU_\tau\big\|^2_{W^{1,2}_x}+\big\|\bfu_{m-1}\big\|^4_{W^{1,2}_x}\big)\ds\bigg]\\
&\leq\,c\tau\sum_{m=2}^M\E\bigg[\int_{t_{m-1}}^{t_m}\big(1+\big\|\bfu_{m-1}\big\|^4_{W^{1,2}_x}\big)\ds\bigg]\leq \,c\tau
\end{align*}
using Lemma \ref{lemma:3.1} in the last step. This proves
 \begin{align}\label{estsubsec:4a}|(\mathrm{IV})|\leq\,c\tau.
\end{align}

\subsection{The interpolation error $(\mathrm{V})$}\label{subsecint}
In order to estimate $(\mathrm{V})$ we differentiate \eqref{eq:0107} and obtain for $f(z)=z^3-z$
\begin{align*}
\mathcal S_{\tau}&f(\bfu_{m-1})+\frac{1}{\tau}\big(D\mathcal T_{\tau}(\bfU_\tau)-\mathcal S_\tau\big)\bfu_{m-1}\\&=\mathcal S_{\tau}\big(f(\bfu_{m-1})- f'(\mathcal T_\tau(\bfU_\tau))D\mathcal T_\tau(\bfU_\tau)\bfu_{m-1}\big)
\end{align*}
where
\begin{align*}
f(\bfu_{m-1})- f'(\mathcal T_\tau(\bfU_\tau))D\mathcal T_\tau(\bfU_\tau)\bfu_{m-1}&=f(\bfu_{m-1})- f'\Big(\frac{t-t_{m-1}}{\tau}\bfu_{m-1}\Big)\bfu_{m-1}\\
&\quad+\Big(f'\Big(\frac{t-t_{m-1}}{\tau}\bfu_{m-1}\Big)- f'\big(\bfU_\tau)\Big)\bfu_{m-1}\\
&\quad+\big(f'(\bfU_\tau)- f'(\mathcal T_\tau(\bfU_\tau))\big)\bfu_{m-1}\\
&\quad+f'(\mathcal T_\tau(\bfU_\tau))\big(\id-D\mathcal T_\tau(\bfU_\tau)\big)\bfu_{m-1}.
\end{align*}
One easily checks that the first term disappears when integrating over $[t_{m-1},t_m]$ using that $f(0)=0$. As far as the second term is concerned we write
\begin{align}\label{eq:qinN}
\begin{aligned}
\Big(f'\Big(\frac{t-t_{m-1}}{\tau}\bfu_{m-1}\Big)- f'\big(\bfU_\tau)\Big)\bfu_{m-1}&=6\frac{t-t_{m}}{\tau}|\bfu_{m-1}|^2\Phi(\bfu_{m-1})(W_t-W_{t_{m-1}})\\
&\quad+\bfu_{m-1}|\Phi(\bfu_{m-1})(W_t-W_{t_{m-1}})|^2.
\end{aligned}
\end{align}
The first term vanishes under the expectation and hence can be ignored, while the expectation of the ($L^2_x$-norm of the) second one can be controlled by $\tau\E\big[\|\bfu_{m-1}\|_{L^{2}_x}\|\Phi(\bfu_{m-1})\|^2_{L_2(\mathfrak U;W^{2,2}_x)}\big]$.
as a consequence of It\^{o}-isometry. By \eqref{est:TL2} and \eqref{est:Tidhigher}
\begin{align*}
\big\|\big(f'(\bfU_\tau)- f'(\mathcal T_\tau(\bfU_\tau))\big)\bfu_{m-1}\|_{L^2_x}&\leq\,c\big(\|\bfU_\tau\|_{L^6_x}+\|\mathcal T_\tau(\bfU_\tau)\|_{L^6_x}\big)\|\bfU_\tau-\mathcal T_\tau(\bfU_\tau)\|_{L^6_x}\|\bfu_{m-1}\|_{L^6_x}\\
&\leq\,c\tau\|\bfU_\tau\|_{W^{1,2}_x}\big(1+\|\bfU_\tau\|_{W^{3,2}_x}+\|\bfU_\tau\|_{W^{2,2}_x}^3\big)\|\bfu_{m-1}\|_{W^{1,2}_x}\\
&\leq\,c\tau\big(1+\|\bfU_\tau\|_{W^{3,2}_x}^5+\|\bfu_{m-1}\|_{W^{1,2}_x}^5\big).
\end{align*}
Finally,
\begin{align*}
\|f'(\mathcal T_\tau(\bfU_\tau))\big(\id-D\mathcal T_\tau(\bfU_\tau)\big)\bfu_{m-1}\big\|_{L^2}&\leq\,c\|f'(\mathcal T_\tau(\bfU_\tau))\|_{L^{\infty}_x}\|\id-D\mathcal T_\tau(\bfU_\tau)\|_{\mathcal L(L^{2}_x)}\|\bfu_{m-1}\big\|_{L^{2}_x}\\
&\leq\,c\tau\|\mathcal T_\tau(\bfU_\tau)\|_{W^{2,2}_x}^2\big(1+\|\bfU_\tau\|^2_{W^{1,2}_x}\big)\|\bfu_{m-1}\big\|_{L^{2}_x}\\
&\leq\,c\tau\big(1+\|\bfU_\tau\|_{W^{2,2}_x}^6\big)\big(1+\|\bfU_\tau\|^2_{W^{1,2}_x}\big)\|\bfu_{m-1}\big\|_{L^{2}_x}\\
&\leq\,c\tau\big(1+\|\bfU_\tau\|_{W^{2,2}_x}^{10}+\|\bfu_{m-1}\big\|^{10}_{L^{2}_x}\big)
\end{align*}
using \eqref{eq:T-id} and \eqref{est:TW22}. On account of
Lemma \ref{lem:kol1}, \eqref{eq:1102b} and Lemma \ref{lemma:3.1} we conclude
\begin{align}\label{error:v}
|(\mathrm V)|
&\leq\,c\tau^2\sum_{m=2}^{M}\E\Big(1+\|\bfu_{m-1}\|_{W^{3,2}_x}^5+\|\bfu_{m-1}\|_{W^{2,2}_x}^{10}\Big)\leq\,c\tau. \end{align}

\subsection{The error in the correction term $(\mathrm{VI})$}\label{subsec:4}
First of all we decompose
\begin{align*}
(\mathrm{VI})
&=\frac{1}{2}\sum_{m=2}^{M}\sum_{i\geq1}\E\bigg[\int_{t_{m-1}}^{t_m}D^2\UU(T-s,\bfu_{\tau})\Big(\Phi(\bfu_{\tau})e_i,\big(\Phi(\bfu_{\tau})-\Phi(\bfu_{m-1})\big)e_i\Big)\ds\bigg]\\
&+\frac{1}{2}\sum_{m=2}^{M}\sum_{i\geq1}\E\bigg[\int_{t_{m-1}}^{t_m}D^2\UU(T-s,\bfu_{\tau})\Big(\big(\Phi(\bfu_{\tau})-\Phi(\bfu_{m-1})\big)e_i,\Phi(\bfu_{m-1})e_i\Big)\ds\bigg]\\
&+\frac{1}{2}\sum_{m=2}^{M}\sum_{i\geq1}\E\bigg[\int_{t_{m-1}}^{t_m}D^2\UU(T-s,\bfu_{\tau})\Big((\id-D\mathcal T_\tau\big(\bfU_\tau\big))\Phi(\bfu_{m-1})e_i,D\mathcal T_\tau\big(\bfU_\tau\big)\Phi(\bfu_{m-1})e_i\Big)\ds\bigg]\\
&+\frac{1}{2}\sum_{m=2}^{M}\sum_{i\geq1}\E\bigg[\int_{t_{m-1}}^{t_m}D^2\UU(T-s,\bfu_{\tau})\Big(D\mathcal T_\tau\big(\bfU_\tau\big)\Phi(\bfu_{m-1})e_i,(\id-D\mathcal T_\tau\big(\bfU_\tau\big))\Phi(\bfu_{m-1})e_i\Big)\ds\bigg]\\
&=:(\mathrm{VI})_1+\dots+(\mathrm{VI})_4.
\end{align*}
By Lemma \ref{lem:kol2}, \eqref{eq:phi}, \eqref{eq:2704}
and \eqref{eq:T-id} we obtain
\begin{align*}
|(\mathrm{VI})_4|&\leq \,c\sum_{m=2}^{M}\sum_{i\geq1}\E\bigg[\int_{t_{m-1}}^{t_m}\|D\mathcal T_\tau\big(\bfU_\tau\big)\Phi(\bfu_{m-1})e_i\|_{L^6_x}\|(\id-D\mathcal T_\tau\big(\bfU_\tau\big))\Phi(\bfu_{m-1})e_i\|_{L^2_x}\ds\bigg]\\
&\leq \,c\tau \sum_{m=2}^{M}\sum_{i\geq1}\E\bigg[\int_{t_{m-1}}^{t_m}\|\Phi(\bfu_{m-1})e_i\|_{L^6_x}\|\Phi(\bfu_{m-1})e_i\|_{W^{2,2}_x}\big(1+\|\bfU_\tau\|_{W^{1,2}_x}^2\big)\ds\bigg]\\
&\leq \,c\tau \sum_{m=2}^{M}\E\bigg[\int_{t_{m-1}}^{t_m}\|\Phi(\bfu_{m-1})\|_{L_2(\mathfrak U;W^{2,2}_x)}^2\big(1+\|\bfU_\tau\|_{W^{1,2}_x}\big)\ds\bigg]\\
&\leq \,c\sum_{m=2}^{M}\E\bigg[\int_{t_{m-1}}^{t_m}(1+\|\bfu_{m-1}\|_{W^{2,2}_x}^3+\|\bfU_\tau\|^3_{W^{1,2}_x}\big)\ds\bigg].
\end{align*}
 Hence we obtain
\begin{align*}
|(\mathrm{VI})_4|&\leq \,c\sum_{m=2}^{M}\E\bigg[\int_{t_{m-1}}^{t_m}(1+\|\bfu_{m-1}\|^3_{W^{2,2}_x})\ds\bigg]\leq\,c\tau
\end{align*}
using \eqref{eq:1102b} and Lemma \ref{lemma:3.1}.
The same idea can be used to estimate $(\mathrm{VI})_3$.
In order to estimate $(\mathrm{VI})_1$ and $(\mathrm{VI})_2$ we apply It\^{o}'s formula to the functional
\begin{align*}
t\mapsto \big(\Phi(\bfu_{\tau})-\Phi(\bfu_{m-1})\big)e_i
\end{align*}
taking $\mathbb P$-a.s. values in $L^2(\mt)$.
We obtain from \eqref{tdiscr'} for
$t\in[t_{m-1},t_m]$
\begin{align}\label{eq:1605}
\begin{aligned}
\big(\Phi(\bfu_{\tau}(t))&-\Phi(\bfu_{m-1})\big)e_i\\&=\frac{1}{2}\int_{t_{m-1}}^tD^2\Phi(\bfu_\tau)\big(D\mathcal T_{\tau}(\bfU_\tau)\Phi(\bfu_{m-1})\big)^\ast D\mathcal T_{\tau}(\bfU_\tau)\Phi(\bfu_{m-1})e_i\ds\\
&\quad+\int_{t_{m-1}}^t\frac{D\mathcal T_{\tau}(\bfU_\tau)-\mathrm{id}}{\tau}\bfu_{m-1}D\Phi(\bfu_{\tau})e_i\ds\\
&\quad+\frac{1}{2}\sum_{j\geq 1}\int_{t_{m-1}}^t D\Phi(\bfu_{\tau})e_iD^2\mathcal T_\tau\big(\bfU_\tau\big)\big(\Phi(\bfu_{m-1})e_j,\Phi(\bfu_{m-1})e_j\big)\ds\\
&\quad+\int_{t_{m-1}}^tD\mathcal T_{\tau}(\bfU_\tau)\Phi(\bfu_{m-1})D\Phi(\bfu_{\tau})e_i\,\dd \beta^i.
\end{aligned}
\end{align}
The last term leads us to 
\begin{align*}
&\frac{1}{2}\sum_{m=2}^{M}\sum_{i\geq1}\E\bigg[\int_{t_{m-1}}^{t_m}D^2\UU(T-s,\bfu_{\tau})\bigg(\Phi(\bfu_{\tau})e_i,\int_{t_{m-1}}^sD\mathcal T_{\tau}(\bfU_\tau)\Phi(\bfu_{m-1})D\Phi(\bfu_{\tau})e_i\,\dd \beta^i\bigg)\ds\bigg]\\
&=\frac{1}{2}\sum_{m=2}^{M}\sum_{i\geq1}\E\bigg[\int_{t_{m-1}}^{t_m}\int_{t_{m-1}}^sD^3\UU(T-s,\bfu_{\tau})\\&\qquad\qquad\qquad\qquad\qquad\bigg(\mathcal D^i_\sigma\bfu_\tau,\Phi(\bfu_{\tau})e_i,D\mathcal T_{\tau}(\bfU_\tau)\Phi(\bfu_{m-1})D\Phi(\bfu_{\tau})e_i\bigg)\,\dd\sigma\ds\bigg]\\
&\quad+\frac{1}{2}\sum_{m=2}^{M}\sum_{i\geq1}\E\bigg[\int_{t_{m-1}}^{t_m}\int_{t_{m-1}}^sD^2\UU(T-t,\bfu_{\tau})\\&\qquad\qquad\qquad\qquad\qquad\bigg(D\Phi(\bfu_{\tau})e_i\mathcal D^i_\sigma\bfu_\tau,D\mathcal T_{\tau}(\bfU_\tau)\Phi(\bfu_{m-1})D\Phi(\bfu_{\tau})e_i\bigg)\,\dd\sigma\ds\bigg]\\
&=\frac{1}{2}\sum_{m=2}^{M}\sum_{i\geq1}\E\bigg[\int_{t_{m-1}}^{t_m}\int_{t_{m-1}}^tD^3\UU(T-s,\bfu_{\tau})\\&\qquad\qquad\qquad\qquad\qquad\bigg(D\mathcal T_{\tau}(\bfU_\tau)\Phi(\bfu_{m-1})e_i,\Phi(\bfu_{\tau})e_i,D\mathcal T_{\tau}(\bfU_\tau)\Phi(\bfu_{m-1})D\Phi(\bfu_{\tau})e_i\bigg)\,\dd\sigma\ds\bigg]\\
&\quad+\frac{1}{2}\sum_{m=2}^{M}\sum_{i\geq1}\E\bigg[\int_{t_{m-1}}^{t_m}\int_{t_{m-1}}^sD^2\UU(T-s,\bfu_{\tau})\\&\qquad\qquad\qquad\qquad\qquad\bigg(D\Phi(\bfu_{\tau})e_iD\mathcal T_{\tau}(\bfU_\tau)\Phi(\bfu_{m-1})e_i,D\mathcal T_{\tau}(\bfU_\tau)\Phi(\bfu_{m-1})D\Phi(\bfu_{\tau})e_i\bigg)\,\dd\sigma\ds\bigg]
\end{align*}
where we used \eqref{eq:malpart} and $\mathcal D_s\bfu_\tau=D\mathcal T_{\tau}(\bfU_\tau)\Phi(\bfu_{m-1})$ for $s\in[t_{m-1},t_m]$. Using Lemma \ref{lem:kol3} the term involving $D^3\UU$ can be estimated by
\begin{align*}
\sum_{m=2}^{M}&\sum_{i\geq1}\E\bigg[\int_{t_{m-1}}^{t_m}\int_{t_{m-1}}^s\|D\mathcal T_{\tau}(\bfU_\tau)\Phi(\bfu_{m-1})e_i\|_{L^6_x}\|\Phi(\bfu_{\tau})e_i\|_{L^6_x}\\&\qquad\qquad\qquad\qquad\qquad\times\|D\mathcal T_{\tau}(\bfU_\tau)\Phi(\bfu_{m-1})D\Phi(\bfu_{\tau})e_i\|_{L^6_x}\,\dd\sigma\ds\bigg]\\
&\leq\,c\sum_{m=2}^{M}\sum_{i\geq1}\E\bigg[\int_{t_{m-1}}^{t_m}\int_{t_{m-1}}^s\|\Phi(\bfu_{m-1})e_i\|_{L^6_x}\|\Phi(\bfu_{\tau})e_i\|_{L^6_x}\|\Phi(\bfu_{m-1})D\Phi(\bfu_{\tau})e_i\|_{L^6_x}\,\dd\sigma\ds\bigg]\\
&\leq\,c\sum_{m=2}^{M}\sum_{i\geq1}\E\bigg[\int_{t_{m-1}}^{t_m}\int_{t_{m-1}}^s\|\Phi(\bfu_{m-1})e_i\|_{W^{1,2}_x}\|\Phi(\bfu_{\tau})e_i\|_{W^{1,2}_x}\\&\qquad\qquad\qquad\qquad\qquad\qquad\times\|\Phi(\bfu_{m-1})\|_{\mathcal L(W^{1,2}_x)}\|D\Phi(\bfu_{\tau})e_i\|_{W^{1,2}_x}\,\dd\sigma\ds\bigg]\\
&\leq\,c\sum_{m=2}^{M}\E\int_{t_{m-1}}^{t_m}\int_{t_{m-1}}^s\Big(\|\Phi(\bfu_{m-1})\|_{L_2(\mathfrak U;W^{1,2}_x)}^4+\|\Phi(\bfu_{\tau})\|_{L_2(\mathfrak U;W^{1,2}_x)}^4\Big)\,\dd\sigma\ds\\
&\quad+\,c\sum_{m=2}^{M}\E\bigg[\int_{t_{m-1}}^{t_m}\int_{t_{m-1}}^s\|D\Phi(\bfu_{\tau})\|_{L_2(\mathfrak U;W^{1,2}_x)}^4\,\dd\sigma\ds\bigg]\\
&\leq\,c\sum_{m=2}^{M}\E\bigg[\int_{t_{m-1}}^{t_m}\int_{t_{m-1}}^s\Big(\|\bfu_{m-1}\|_{W^{1,2}_x}^4+\|\bfu_{\tau}\|_{W^{1,2}_x}^4+1\Big)\,\dd\sigma\ds\bigg]
\end{align*}
taking into account \eqref{eq:2704}, \eqref{eq:phi} and Lemma \ref{lemma:3.1}.
For the other term we obtain similarly by Lemma \ref{lem:kol2} the bound
\begin{align*}
\sum_{m=2}^{M}\sum_{i\geq1}&\E\bigg[\int_{t_{m-1}}^{t_m}\int_{t_{m-1}}^s\|D\Phi(\bfu_{\tau})e_iD\mathcal T_{\tau}(\bfU_\tau)\Phi(\bfu_{m-1})e_i\|_{L^2_x}\|D\mathcal T_{\tau}(\bfU_\tau)\Phi(\bfu_{m-1})D\Phi(\bfu_{\tau})e_i\|_{L^6_x}\,\dd\sigma\ds\bigg]\\
&\leq\,c\sum_{m=2}^{M}\sum_{i\geq1}\E\bigg[\int_{t_{m-1}}^{t_m}\int_{t_{m-1}}^s\|\Phi(\bfu_{m-1})e_i\|_{L^2_x}\|\Phi(\bfu_{m-1})D\Phi(\bfu_{\tau})e_i\|_{L^6_x}\,\dd\sigma\ds\bigg]\\
&\leq\,c\sum_{m=2}^{M}\sum_{i\geq1}\E\bigg[\int_{t_{m-1}}^{t_m}\int_{t_{m-1}}^s\|\Phi(\bfu_{m-1})e_i\|_{L^2_x}\|\Phi(\bfu_{m-1})\|_{\mathcal L(W^{1,2}_x)}\|D\Phi(\bfu_{\tau})e_i\|_{W^{1,2}_x}\,\dd\sigma\ds\bigg]\\
&\leq\,c\sum_{m=2}^{M}\E\bigg[\int_{t_{m-1}}^{t_m}\int_{t_{m-1}}^s\Big(\|\Phi(\bfu_{m-1})\|_{L_2(\mathfrak U;W^{1,2}_x)}^3+\|D\Phi(\bfu_{\tau})\|_{L_2(\mathfrak U;W^{1,2}_x)}^3\Big)\,\dd\sigma\ds\bigg]\\
&\leq\,c\sum_{m=2}^{M}\E\bigg[\int_{t_{m-1}}^{t_m}\int_{t_{m-1}}^s\Big(\|\bfu_{m-1}\|_{W^{1,2}_x}^3+1\Big)\,\dd\sigma\ds\bigg]\leq\,c\tau.
\end{align*}
 The $L^2_x$-norm of the remaining terms in \eqref{eq:1605} can be estimated
by the sum of
\begin{align*}
&\tau\|D\mathcal T_{\tau}(\bfU_\tau)\Phi(\bfu_{m-1})e_i\|_{L^2_x}\|D\mathcal T_{\tau}(\bfU_\tau)\Phi(\bfu_{m-1})\|_{\mathcal L(L^2_x)},\\
&\|(D\mathcal T_{\tau}(\bfU_\tau)-\mathrm{id})\bfu_{m-1}\|_{L^2_x}\|D\Phi(u_\tau)e_i\|_{\mathcal L(L^2_x)},\\& \tau\sum_{i\geq1}\|D^2\mathcal T_\tau\big(\bfU_\tau\big)\big(\Phi(\bfu_{m-1})e_j,\Phi(\bfu_{m-1})e_j\big)\|_{L^2_x}\|D\Phi(u_\tau)e_i\|_{\mathcal L(L^2_x)},
\end{align*}
using boundedness of $D\Phi$ and $D^2\Phi$, \emph{cf.} \eqref{eq:phi}. By the properties of $\mathcal T_\tau$ from \eqref{eq:2704}, \eqref{eq:T-id} and \eqref{eq:D2T} we bound these terms by
\begin{align*}
&\tau(1+\|\bfu_{m-1}\|_{L^2_x})\|\Phi(\bfu_{m-1})e_i\|_{L^2_x}
,\\
&\tau\|\bfu_{m-1}\|_{W^{2,2}_x}(1+\|\bfU_\tau\|_{W^{1,2}_x})\|D\Phi(u_\tau)e_i\|_{\mathcal L(L^2_x)},\\
&\|\bfU_\tau\|_{W^{1,2}_x}\|\Phi(\bfu_{m-1})\|^2_{L_2(\mathfrak U;W^{1,2}_x)}\|D\Phi(u_\tau)e_i\|_{\mathcal L(L^2_x)}.
\end{align*}
 We conclude
\begin{align*}
|(\mathrm V)_1|
&\leq\,c\tau^2\sum_{m=2}^{M}\E\Big(1+\|\bfu_{m-1}\|_{W^{2,2}_x}^2+\|\bfu_{m-1}\|_{W^{1,2}_x}^3\Big)\leq\,c\tau \end{align*}
using \eqref{eq:1102b} and Lemma \ref{lemma:3.1}.
The estimate for $(\mathrm{IV})_2$ is analogous and we conclude
\begin{align}\label{estsubsec:4}
|(\mathrm{IV})|\leq\,c\tau.
\end{align}

\textbf{Conclusion.} Combining the estimates \eqref{estsubsec:1}, \eqref{estsubsec:2}, \eqref{estsubsec:3}, \eqref{estsubsec:4a}, \eqref{error:v} and \eqref{estsubsec:4} we have shown
\begin{align*}
\E[\varphi(\bfu(T,\bfh))]-\E[\varphi(\bfu_M)]\leq\,c\tau.
\end{align*}
The proof of Theorem \ref{thm:main} is hereby complete.

\section*{Compliance with Ethical Standards}\label{conflicts}
\smallskip
\par\noindent
{\bf Conflict of Interest}. The author declares that he has no conflict of interest.

\smallskip
\par\noindent
{\bf Data Availability}. Data sharing is not applicable to this article as no datasets were generated or analysed during the current study.

\end{document}